\tikzstyle{vertex}=[circle,draw=black,fill=black,inner sep=0,minimum size=0.2cm,text=white,font=\footnotesize]
\newtheorem*{thm*}{Theorem}
\newcommand{\ff}{{\mathcal F}}
\newcommand{\aaa}{{\mathcal A}}
\newtheorem*{cla*}{Claim}
\newcommand{\bb}{{\mathcal B}}
\newcommand{\hh}{\mathcal H}
\newtheorem{thm}{Theorem}
\newtheorem{lem}[thm]{Lemma}
\newtheorem{cla}[thm]{Claim}
\newtheorem{cor}[thm]{Corollary}
\date{}
\newtheorem{prop}[thm]{Proposition}
\newtheorem{defn}{Definition}
\DeclareMathOperator{\E}{\mathrm E}
\title{Partition-free families of sets}
\author{Peter Frankl, Andrey Kupavskii\footnote{Moscow Institute of Physics and Technology, Ecole Polytechnique F\'ed\'erale de Lausanne; Email: {\tt kupavskii@yandex.ru} \ \ Research supported in part by Swiss National Science Foundation grants no. 200020-162884 and 200021-175977 and by the grant N 15-01-03530 of the Russian Foundation for Basic Research.}}
\date{}
\begin{document}
\maketitle
\begin{abstract} Let $m(n)$ denote the maximum size of a family of subsets which does not contain two disjoint sets along with their union. In 1968 Kleitman proved that $m(n) = {n\choose m+1}+\ldots +{n\choose 2m+1}$ if $n=3m+1$. Confirming the conjecture of Kleitman, we establish the same equality for the cases $n=3m$ and $n=3m+2$, and also determine all extremal families. Unlike the case $n=3m+1$, the extremal families are not unique. This is a plausible reason behind the relative difficulty of our proofs. We completely settle the case of several families as well.
\end{abstract}
\section{Introduction}
For a positive integer $n$ let $[n]:=\{1,2,\ldots,n\}$ be the standard $n$-element set and $2^{[n]}$ its power set. Subsets of $2^{[n]}$ are called \underline{families}.

In 1928 Sperner \cite{S} proved that if a family has size greater than ${n\choose \lfloor n/2\rfloor}$, then it must contain two subsets $F,G$, such that $F\subsetneq G$. This famous result served as the starting point of the presently burgeoning field of extremal set theory.

Paul Erd\H os was behind many of the early developments. In connection with an analytic problem of Littlewood and Offord he proved \cite{E46} that if $|\ff|$ is larger than the sum of the $l$ largest binomial coefficients, then $\ff$ contains a \underline{chain} $F_0\subsetneq F_1\subsetneq \ldots \subsetneq F_l.$

As much as by his results, Erd\H os also contributed to the development of extremal set theory by his many problems. Under the influence of Erd\H os, the young and promising physicist Daniel Kleitman switched to mathematics and went on to solve lots of beautiful problems. Many of these result and proofs are presented in the long chapter \cite{GK}.
For an introduction to the topic the reader is advised to consult the books \cite{A}, \cite{B}, \cite{E}, \cite{J}.

The generic extremal set theory problem is as follows. Suppose that $\ff$ does not contain a certain type of configurations. Determine or estimate the maximum of $|\ff|$. Let us give as an example the problem which is the main topic of the present paper.

The family $\ff\subset 2^{[n]}$ is called \underline{partition-free} if there are no $F_0,F_1,F_2\in \ff$ satisfying $F_1\cap F_2=\emptyset$ and $F_0 = F_1\cup F_2$. How large can $|\ff|$ be?

This problem was proposed to Kleitman by Erd\H os.
Half a century ago Kleitman \cite{Kl2} proved the following beautiful result.

\begin{thm}[Kleitman \cite{Kl2}] Suppose that $n=3m+1$ for some positive integer $m$. Let $\ff\subset 2^{[n]}$ be partition-free. Then
\begin{equation}\label{eq7} |\ff|\le \sum_{t=m+1}^{2m+1}{n\choose t}.\end{equation}
\end{thm}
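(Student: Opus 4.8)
Write $T(k):=\sum_{t=k+1}^{2k+1}\binom{3k+1}{t}$; the theorem asserts $|\ff|\le T(m)$ when $n=3m+1$. The reverse inequality is immediate: the family of all $A\subseteq[n]$ with $m+1\le|A|\le 2m+1$ is partition-free, since two disjoint such sets have union of size at least $2(m+1)>2m+1$. Observe also that $T(m)$ equals the sum of the $m+1$ largest among the binomial coefficients $\binom n0,\dots,\binom nn$---the same bound Erd\H os's chain theorem gives for families containing no chain of $m+2$ sets---so the claim is that partition-free families are never larger than chain-bounded families of the corresponding depth.

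The plan is to prove $|\ff|\le T(m)$ by induction on $m$, shrinking the ground set by three elements at each step so as to stay in the range $n\equiv 1\pmod 3$; the cases $m\le 1$ are checked directly. For the step, fix $W=\{n-2,n-1,n\}$ and, for each $S\subseteq W$, set $\ff_S:=\{F\setminus W: F\in\ff,\ F\cap W=S\}\subseteq 2^{[n-3]}$, so that $|\ff|=\sum_{S\subseteq W}|\ff_S|$ while $[n-3]$ has $3(m-1)+1$ elements. A partition triple $F_0=F_1\sqcup F_2$ of $\ff$ restricts---writing $S_i=F_i\cap W$---to a disjoint decomposition $S_0=S_1\sqcup S_2$ of subsets of $W$ together with a partition triple $(F_1\setminus W,\ F_2\setminus W,\ F_0\setminus W)$ spread across $\ff_{S_1},\ff_{S_2},\ff_{S_0}$, and every such configuration arises this way. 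Taking $S_0=S_1=S_2=\emptyset$ shows $\ff_\emptyset$ is partition-free over $[n-3]$, so the induction hypothesis gives $|\ff_\emptyset|\le T(m-1)$; every nontrivial splitting $S_0=S_1\sqcup S_2$ instead yields only a \emph{cross-partition-free} constraint on $(\ff_{S_1},\ff_{S_2},\ff_{S_0})$: there are no $A_1\in\ff_{S_1}$, $A_2\in\ff_{S_2}$ with $A_1\cap A_2=\emptyset$ and $A_1\cup A_2\in\ff_{S_0}$.

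The crux, and the part I expect to be hardest, is to turn this web of relations into $\sum_{S\subseteq W}|\ff_S|\le T(m)$. Vandermonde's identity gives $T(m)=8\,T(m-1)+\Delta_m$ with $\Delta_m=\frac{m+1}{m}\binom{3m-2}{m-1}+5\binom{3m-2}{m-2}+\binom{3m-2}{m-3}>0$, so bounding each of the eight fibers by $T(m-1)$ would more than suffice; however only $\ff_\emptyset$ is a priori partition-free, and the bound $|\ff_S|\le T(m-1)$ genuinely fails for the other seven (for instance, the partition-free family of all subsets of $[n]$ that contain $n$ and avoid $\{n-2,n-1\}$ has $\ff_{\{n\}}=2^{[n-3]}$), so one cannot argue fiber by fiber and must exploit all the cross-partition-free relations simultaneously. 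The natural route is to carry through the induction a sharpened statement: a sharp bound on an appropriate (presumably weighted) sum $\sum_S w_S|\ff_S|$ over any eight families on a ground set $\equiv 1\pmod 3$ that obey this particular system of cross-partition-free relations. Isolating the right weights and statement, and then proving the sharp bound---which I expect to come down to an extremal analysis of the layer-profiles of the eight fibers, and is made heavier by the need to classify equality (the extremal configurations being non-unique)---is the main obstacle. One could also attempt Katona-type averaging over cyclic orders, but the resulting arc problem has a larger extremum than the averaging can absorb (sufficiently long arcs on a cycle are pairwise intersecting, so many of them may be kept), so that route alone yields only a weaker bound.
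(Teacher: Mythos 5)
You set up a potentially viable framework (compress to eight fibers indexed by $S\subseteq W$, note that $\ff_\emptyset$ is partition-free on $[n-3]$ and the other fibers are coupled by cross-partition-free constraints), but you explicitly stop short of the argument: ``Isolating the right weights and statement, and then proving the sharp bound \ldots is the main obstacle.'' That sentence is where the proof would have to live, and there is nothing there. As it stands the proposal identifies a reduction and a heuristic ($T(m)=8T(m-1)+\Delta_m$ with $\Delta_m>0$) without proving the inequality; without the weighted-fiber lemma it is not a proof but a research plan.

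The paper (following Kleitman) proves the statement by a quite different and much shorter route. The key step is Claim 2.2 / Lemma 2.1: for any three pairwise disjoint sets $S_1,S_2,S_3$, at least two of the six sets $S_i$, $S_{i_+}\cup S_{i_-}$ are missing from the corresponding families; averaging over a uniformly random such disjoint triple of fixed sizes gives a linear inequality in the ``missing counts'' $y^t_i$ on each layer. One then takes a carefully weighted positive combination of these inequalities for the size-triples $(m,m,m+1),(m-1,m+1,m+1),(m-2,m+1,m+2),\dots,(0,m+1,2m)$ (Table~1), checks that the resulting coefficient $\beta(t)$ in front of each $y^t$ is exactly $1$ for $t\le m$ and $t\ge 2m+1$ and strictly below $1$ for $m+1\le t\le 2m$, and reads off \eqref{eq7} directly (a short separate argument handles the $2m+1$-layer). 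No induction, no ground-set compression, no Vandermonde bookkeeping. Note that your objection to ``averaging'' applies only to Katona's cyclic-order averaging (which the paper does invoke, but only for the $n=3m$ case in Section~5); the $n=3m+1$ argument averages over random disjoint triples, a different mechanism, and that averaging closes the problem cleanly. Your compression route may well be completable, but you have not shown how, and the paper's method is both simpler and already complete.
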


\noindent \textbf{Example 1. } Let $n=3m+l, 0\le l\le 2$ and define $\mathcal K(n):= \{ K\subset [n]: m+1\le |K|\le 2m+1\}.$
It is evident that $\mathcal K(n)$ is partition-free. This shows that \eqref{eq7} is best possible.
\vskip+0.1cm

It is conjectured in \cite{Kl2} that \eqref{eq7} holds for $n=3m$ and $n=3m+2$ as well. However, for nearly half a century no progress was made on this problem.
The main purpose of the present paper is to confirm Kleitman's conjecture.

Let us mentions that Kleitman's proof is elegant and short. Unfortunately, our proof is much more technical. A reason that suggests that no easy proof exists might be that while for $n=3m+1$ $\mathcal K(n)$ from Example 1 is the \underline{unique} family attaining equality in \eqref{eq7}, it is no longer true for $n=3m+2$ and $n=3m$. \vskip+0.1cm

\noindent \textbf{Example 2. } Let $\ff\subset 2^{[n]}$ be partition-free and define $\ff^d:=\{F\subset [n+1]: F\cap [n]\in \ff\}.$
It is easy to see that $\ff^d$ is partition-free and satisfies $|\ff^d|=2|\ff|$. We call $\ff^d$ the \underline{double} of $\ff$.

Note the identity

$${3m+1\choose m+1}+{2m+1\choose m+2}+\ldots+{3m+1\choose 2m+1} = {3m+1\choose m}+\ldots+{3m+1\choose 2m},$$

$$\text{implying}\ \ \ \ \ \ \ 2\sum_{t=m+1}^{2m+1}{3m+1\choose t} = \sum_{t=m+1}^{2m+1}{3m+2\choose t}.$$
Consequently, $|\mathcal K(3m+1)^d| = |\mathcal K(3m+2)|.$\\

\noindent\textbf{Example 3. } Fix an element $x\in [n]$ and define
$$\tilde{\mathcal K}_x(n):=\Big\{F\in{[n]\choose m}:x\in F\Big\}\cup \mathcal K(n)\setminus\Big\{G\in{[n]\choose 2m+1}: x\in G \Big\}.$$
Since ${3m-1\choose m-1} = {3m-1\choose 2m}$, one has $|\tilde{\mathcal K}_x(3m)| = |\mathcal K(3m)|.$ It can be checked easily that $\tilde{\mathcal K}(n)$ is partition-free.\vskip+0.1cm


\begin{thm}\label{thm1}Suppose that $m\ge 6$ and  $n=3m+2$ or $n=3m$. If $\ff\subset 2^{[n]}$ is partition-free, then \eqref{eq7} holds. Moreover, for $n=3m+2$ the equality in \eqref{eq7} is possible (up to the permutation of the ground set) only when $\ff = \mathcal K(3m+2)$ or $\ff=\mathcal K(3m+1)^d$. For $n=3m$ the equality in \eqref{eq7} is possible only when $\ff = \mathcal K(3m)$ or $\ff=\tilde{\mathcal K}_x(3m)$ for some $x\in [n]$.
\end{thm}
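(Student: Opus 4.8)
The plan is to peel off a single element and reduce to an extremal problem about a \emph{pair} of families. Fix $x\in[n]$ and set $\aaa=\{A\subseteq[n]\setminus\{x\}:A\in\ff\}$ and $\bb=\{B\subseteq[n]\setminus\{x\}:B\cup\{x\}\in\ff\}$, so that $|\ff|=|\aaa|+|\bb|$. Inspecting which of $F_0,F_1,F_2$ may contain $x$ (not both $F_1,F_2$, as they are disjoint; and $x\in F_0$ forces $x\in F_1\cup F_2$) shows that $\ff$ is partition-free if and only if (a) $\aaa\subseteq 2^{[n-1]}$ is partition-free, and (b) there are no $A\in\aaa$ with $A\ne\emptyset$ and $B_0,B_1\in\bb$ with $B_0=B_1\sqcup A$. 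When $n=3m+2$ the two families live on the $(3m+1)$-element set $[n]\setminus\{x\}$, so Kleitman's Theorem gives $|\aaa|\le K:=\sum_{t=m+1}^{2m+1}\binom{3m+1}{t}$; when $n=3m$ they live on a $(3m-1)=3(m-1)+2$-element set, and one may bound $|\aaa|$ by the $n=3m+2$ case of Theorem~\ref{thm1} at level $m-1$ (for the smallest $m$ one instead peels off two elements and uses Kleitman's Theorem directly). Everything then reduces to the \emph{two-family problem}: maximise $|\aaa|+|\bb|$ over pairs with $\aaa$ partition-free and (b), and determine the extremal pairs --- which should be exactly those coming from Examples 1 and 2 for $n=3m+2$, and from Examples 1 and 3 for $n=3m$.

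The heart of the matter is a \emph{trade-off lemma}: a large $\bb$ forces $\aaa$ to consist of short sets and hence to be small. Since the crude estimate $|\aaa|+|\bb|\le K+2^{n-1}$ is far too weak, condition (b) must be used quantitatively. What makes this markedly harder than the $n=3m+1$ case --- and what produces the non-uniqueness --- is that the optimum need \emph{not} have $|\aaa|$ maximal: for a ground set of size $3m-1$, the best configuration takes $\aaa=\{A:m+1\le|A|\le 2m+1\}$, which is \emph{not} the largest partition-free family on that set, and compensates by enlarging $\bb$; this is precisely how $\tilde{\mathcal K}_x(3m)$ arises. I would prove the lemma in four steps. (i) For a fixed $\aaa$, the optimal $\bb$ may be taken to be a maximum independent set of the auxiliary graph on $2^{[n-1]}$ joining $B_0$ to $B_1$ whenever $B_0=B_1\sqcup A$ for some nonempty $A\in\aaa$ --- a Sperner-type condition on $\bb$. (ii) By a suitable sequence of compressions, reduce $\aaa$ to a block of consecutive layers $\{A:p\le|A|\le q\}$ with $q\le 2p-1$ (partition-freeness), chosen so that $|\aaa|$ plus the size of the best compatible $\bb$ does not decrease. (iii) For such an $\aaa$, condition (b) forbids chain-jumps of $\bb$ of size in $[p,q]$, so $\bb$ lies in at most $p$ consecutive layers; a direct computation over the few admissible pairs $(p,q)$ then isolates the claimed optima and shows every other choice is strictly worse. (iv) Remove the ``union of layers'' restriction on $\bb$ (and, if needed, on $\aaa$) via a Kruskal--Katona / normalised-matching argument showing partial layers never help, sharp enough to pin down the equality cases.

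The extremal characterisation is then obtained by tracing equality backwards through the reduction: it forces $\aaa$ to be one of the layer-blocks singled out in step (iii) and $\bb$ to be one of its two equal-sized optimal shifts, and reassembling $\ff=\aaa\cup\{B\cup\{x\}:B\in\bb\}$ returns exactly $\mathcal K(3m+2)$ or $\mathcal K(3m+1)^d$ when $n=3m+2$, resp. $\mathcal K(3m)$ or $\tilde{\mathcal K}_x(3m)$ when $n=3m$. One also has to exclude, using the strict form of the trade-off lemma, that equality is achieved with $\aaa$ strictly sub-optimal but of some other shape.

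The step I expect to be the main obstacle is step (ii): justifying ``we may assume $\aaa$ is a block of layers'' while keeping $\aaa$ partition-free and simultaneously controlling how the reduction affects condition (b) and the optimal $\bb$. Ordinary down- and up-compressions need not preserve partition-freeness, so a tailored compression (or a weighting/shifting argument that avoids compressions) will probably be needed. Secondary difficulties: making the Kruskal--Katona step precise enough for the equality analysis; bookkeeping the case split forced by the several extremal families; and the small-$m$ instances left out by the hypothesis $m\ge 6$.
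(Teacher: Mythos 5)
Your plan takes a genuinely different route from the paper, which proves the bound via the averaged Kleitman-type inequality of Lemma~\ref{lem1} together with an elaborate weighting and discharging scheme (over random triples of disjoint sets for $n=3m+2$, and over random cyclic permutations \`a la Katona for $n=3m$), rather than by peeling an element. Your first-paragraph reduction is correct: with $\aaa=\ff\cap 2^{[n]\setminus\{x\}}$ and $\bb=\{B\subset[n]\setminus\{x\}:B\cup\{x\}\in\ff\}$, partition-freeness of $\ff$ is equivalent to $\aaa$ being partition-free together with the condition (b) that no $B_0,B_1\in\bb$ and nonempty $A\in\aaa$ satisfy $B_0=B_1\sqcup A$. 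The difficulty is that the heart of your argument --- the ``trade-off lemma'' in steps (ii)--(iv) --- is not proved and, as stated, contains gaps.

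Step (ii) you flag yourself, and I do not see how to repair it: there is no known compression that simultaneously preserves partition-freeness of $\aaa$, preserves condition (b), and controls the size of the best compatible $\bb$ (ordinary left- and down-compressions can destroy partition-freeness), and the non-uniqueness of the extremal families for these residues is precisely the kind of obstruction that breaks compression arguments. Step (iii) also contains a false assertion: if $\aaa=\{A:p\le|A|\le q\}$, condition (b) forbids a containment $B_1\subsetneq B_0$ in $\bb$ with $p\le|B_0|-|B_1|\le q$, but this does \emph{not} confine $\bb$ to $p$ consecutive layers. Even if $\bb$ is a union of full layers, the allowed layer-index sets are those whose pairwise differences avoid $[p,q]$, and these can have more than $p$ elements (for example $\{0\}\cup\{q+1,\dots,q+p-1\}$ when $q=2p-1$); for non-layer $\bb$ the constraint is looser still, and determining the optimum is a nontrivial Mirsky/Dilworth-type problem rather than ``a direct computation over a few admissible pairs.'' Step (iv) is unsupported: condition (b) concerns set-differences $B_0\setminus B_1$, not shadows, and I see no Kruskal--Katona or normalised-matching statement that discards partial layers while tracking equality cases. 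Finally, the base case for $n=3m$ is not merely ``peel off two elements'': peeling two produces a four-part decomposition with cross-conditions among four families, not the two-family problem your framework is built on. Taken together, the proposal is an outline whose central lemma remains unproved, and the specific reductions you sketch would need to be replaced by something substantially different.
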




Let us remark also that in view of Example 2 the inequality \eqref{eq7} for $n=3m+1$ follows from the case $n=3m+2$.

\begin{defn} Three families $\mathcal F_1,\mathcal F_2,\mathcal F_3\subset 2^{[n]}$ are called \underline{cross partition-free}, if there is no possible choice of $A\in \mathcal F_1$, $B\in \mathcal F_2$, $C\in \mathcal F_3$ such that one of those sets is equal to the disjoint union of the other two.
\end{defn}

For the case $n=3m+1$ and $n=3m+2$ one can extend \eqref{eq7} to this situation, although in the case $n=3m+2$ we get a new extremal example.

\begin{thm}\label{thm2} Suppose that $\mathcal F_1,\mathcal F_2,\mathcal F_3\subset 2^{[n]}$ are cross partition-free, $n=3m+1$ or $n=3m+2,m\ge 6$. Then
\begin{equation}\label{eq8}|\ff_1|+|\ff_2|+|\ff_3|\le 3\sum_{t=m+1}^{2m+1}{n\choose t}.\end{equation}
Moreover, for $n=3m+1$ the equality holds only when $\ff_1=\ff_2=\ff_3 = \mathcal K(3m+1)$. For $n=3m+2$ the equality up to the permutation of the indices of the families and the elements of the ground set holds only in the following three cases:  \begin{itemize} \item $\ff_1=\ff_2=\ff_3 = \mathcal K(3m+2)$, \item $\ff_1=\ff_2=\ff_3 = \mathcal K(3m+1)^d$,\item $\ff_1 = \{F\subset 2^{[n]}:m+2\le |F|\le 2m+1\},\ \ff_2=\ff_3=\{F\subset 2^{[n]}:m+1\le |F|\le 2m+2\}.$\end{itemize}
\end{thm}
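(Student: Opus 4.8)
The plan is to reduce the case $n=3m+1$ to the case $n=3m+2$, and to obtain the case $n=3m+2$ by running the proof of Theorem~\ref{thm1} with three families in place of one; the genuinely new work lies entirely in the characterization of equality, which is concentrated in the two boundary layers $m+1$ and $2m+2$.

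\textbf{Reduction $n=3m+1\Rightarrow n=3m+2$.} Let $\mathcal F_1,\mathcal F_2,\mathcal F_3\subset 2^{[3m+1]}$ be cross partition-free and let $\mathcal F_i^d\subset 2^{[3m+2]}$ be their doubles. If some $A\in\mathcal F_1^d$, $B\in\mathcal F_2^d$, $C\in\mathcal F_3^d$ formed a forbidden configuration, then, since $X\mapsto X\cap[3m+1]$ preserves disjointness and commutes with taking unions, the sets $A\cap[3m+1],\,B\cap[3m+1],\,C\cap[3m+1]$ would be a forbidden configuration for $(\mathcal F_1,\mathcal F_2,\mathcal F_3)$; hence $(\mathcal F_1^d,\mathcal F_2^d,\mathcal F_3^d)$ is cross partition-free. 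Combining $|\mathcal F_i^d|=2|\mathcal F_i|$ with the identity $2\sum_{t=m+1}^{2m+1}\binom{3m+1}{t}=\sum_{t=m+1}^{2m+1}\binom{3m+2}{t}$ from Example~2, inequality \eqref{eq8} for $n=3m+2$ gives \eqref{eq8} for $n=3m+1$. If equality holds for $n=3m+1$ then $(\mathcal F_i^d)$ is extremal for $n=3m+2$, so each $\mathcal F_i^d$ is one of $\mathcal K(3m+2)$, $\mathcal K(3m+1)^d$, or a family from the mixed triple; among these only $\mathcal K(3m+1)^d$ is a double (the other two separate sets differing only in the element $3m+2$), and the only extremal triple for $n=3m+2$ made up entirely of $\mathcal K(3m+1)^d$ is the constant one, so $\mathcal F_1=\mathcal F_2=\mathcal F_3=\mathcal K(3m+1)$. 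To cover all $m$ rather than only $m\ge 6$, one can instead run Kleitman's short argument directly with three families.

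\textbf{The case $n=3m+2$.} Put $M:=\sum_{t=m+1}^{2m+1}\binom nt$. For each $i$ split $\mathcal F_i=\mathcal M_i\sqcup\mathcal E_i$, where $\mathcal M_i$ is the part of sizes in $[m+1,2m+1]$ and $\mathcal E_i$ the ``extreme'' part (sizes $\le m$ or $\ge 2m+2$). Since $|\mathcal M_i|\le M$, it suffices to show $\sum_i|\mathcal E_i|\le\sum_i(M-|\mathcal M_i|)$, with equality only in the three asserted cases. The cross conditions enter in two dual forms: disjoint $A\in\mathcal F_i$, $B\in\mathcal F_j$ with $i\ne j$ force $A\cup B\notin\mathcal F_k$, so small members of two families forbid middle members of the third; and $A\in\mathcal F_i$ with $A\subsetneq C\in\mathcal F_j$ forces $C\setminus A\notin\mathcal F_k$, so a middle member lying inside a large member of another family forbids a middle member of the third. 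Assembling these forbidden incidences into an injection (or a weighting argument) from the excess extreme sets to the missing middle sets, and invoking the identity $\binom{n}{m+1}=2\binom{n}{m}=2\binom{n}{2m+2}$ valid for $n=3m+2$, yields the inequality.

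\textbf{Where the difficulty lies.} The bound is the routine part; the characterization is the crux, and essentially all of it takes place in the two boundary layers $m+1$ and $2m+2$ --- recall that $\mathcal K(3m+2)$ and $\mathcal K(3m+1)^d$ already differ only there, trading the $\binom{n}{2m+2}$ sets of layer $2m+2$ for an equally large structured portion of layer $m+1$. The new mixed triple is possible because owning a member of size $2m+2$ is far cheaper for a cross system than for a single family: a set of size $m+1$ in $\mathcal F_1$ is only forbidden from being a disjoint union of one member of $\mathcal F_2$ and one member of $\mathcal F_3$. The hard step is therefore a stability analysis: first show that $\sum_i|\mathcal F_i|$ close to $3M$ forces each $\mathcal F_i$ to be close to the middle layers, then determine exactly how the slack of size $\binom{n}{2m+2}$ at each boundary layer can be spent, excluding every possibility except the three configurations; the hypothesis $m\ge 6$ is precisely what makes the underlying binomial-coefficient estimates work. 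I expect controlling the joint behaviour at both boundary layers across all three families simultaneously to be the main obstacle.
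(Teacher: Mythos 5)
Your reduction of $n=3m+1$ to $n=3m+2$ via doubling is valid and is a genuinely different route than the paper takes: the paper proves the $n=3m+1$ case directly in its Section 3, with an explicit averaging over $m+1$ carefully chosen triples $(s_1,s_2,s_3)$ applied to the cross version of Kleitman's inequality, and that direct argument needs no lower bound on $m$; your reduction inherits the $m\ge 6$ restriction from the $n=3m+2$ case. Your doubling observation for the uniqueness part (only $\mathcal K(3m+1)^d$ among the three extremal families for $n=3m+2$ is a double, and only with respect to one element) is correct, though you should check that the ground-set permutation implicit in ``up to permutation'' is forced to fix the doubling element, which requires the observation that $\mathcal K(3m+1)^d$ is a double with respect to a unique element.

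For the $n=3m+2$ case, however, there is a genuine gap. Your proposal stops at identifying the strategy (``assembling these forbidden incidences into an injection (or a weighting argument) from the excess extreme sets to the missing middle sets'') without carrying it out, and you explicitly flag the remaining work as ``the main obstacle.'' The difficulty is real and is precisely where the content of the paper's proof lies: the paper first reduces the global claim, by an averaging argument (its Claim~\ref{clach}), to a \emph{local} inequality \eqref{eq76} about the fraction of $\hh$ that $\ff_1,\ff_2,\ff_3$ can occupy on a random triple of disjoint $(m-1)$-sets, where $\hh$ is an explicit weighted configuration of sets in sizes $m-1,\dots,m+3,2m-2,\dots,2m+3$ with \emph{non-uniform} weights (for instance, central and lateral $(m+2)$-sets carry different weights $\tfrac38\binom{n}{m+2}$ versus $\tfrac{3}{32}\binom{n}{m+2}$). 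This local inequality is then established by a multi-stage discharging procedure with two main cases and several subcases, and the case-of-equality analysis is done inside this local structure. None of this (the choice of configuration, the choice of weights, the discharging order, the case split) is visible in your sketch, and it is not routine: a naïve injection from extreme sets to missing middle sets does not obviously close because an extreme set can collide with many middle sets and the same missing middle set can be ``used up'' by several different extreme sets; the paper's non-uniform weights and staged discharging are exactly what resolves this. In short, your plan matches the paper's plan at a high level, but the proof is missing.
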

Note that \eqref{eq8} implies \eqref{eq7} for $n=3m+1$, and also gives the uniqueness of the extremal family. At the same time, the $n=3m+2$ case of Theorem~\ref{thm2} implies the $n=3m+2$ case of Theorem~\ref{thm1}, along with the characterization of the extremal families.\vskip+0.1cm

For $n=3m$ one can do better.
\vskip+0.1cm
\noindent\textbf{Example 4. } Let $n=3m$ and define
\begin{align}
\mathcal A:=&\{A\subset [n]:m\le |A|\le 2m+1\}\\
\mathcal B:=\mathcal C:=&\{B\subset[n]: m+1\le |B|\le 2m\}.
\end{align}

It is easy to check that $\aaa,\bb,\mathcal C$ are cross partition-free. Using ${3m\choose m} = \frac {2m+1}m{3m\choose 2m+1}$, it follows that
$$|\aaa|+|\bb|+|\mathcal C|=3|\mathcal K(3m)|+\frac 1m{3m\choose 2m+1}.$$

\begin{thm}\label{thm3} Suppose that $\ff_1,\ff_2,\ff_3\subset 2^{[n]}$ are cross partition-free, $n=3m\ge 18$. Then
\begin{equation}\label{eq9}|\ff_1|+|\ff_2|+|\ff_3|\le 3|\mathcal K(3m)|+\frac 1m{3m\choose 2m+1}.\end{equation}
Moreover, the equality holds only for the families $\ff_i$ of the form as in Example 4.
\end{thm}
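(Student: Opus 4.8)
The plan is to reduce the three-family problem for $n=3m$ to the structural information already available for single partition-free families, and then to squeeze out the extra additive term $\frac1m\binom{3m}{2m+1}$ by a careful local analysis around the middle layers. First I would pass to the ``downward-shifted'' or compressed versions of $\ff_1,\ff_2,\ff_3$: as in Kleitman's original argument and in the proof of Theorem~\ref{thm1}, one may apply the standard down-compression (replacing a set by a subset whenever that keeps us inside the family) simultaneously in all three families, checking that the cross partition-free property is preserved — if $A=B\sqcup C$ with $A$ obtained after a compression, then a pre-image of the configuration survives in the original families. After compressing we may assume each $\ff_i$ is a down-set intersected with the complement of a down-set, i.e.\ essentially determined by its ``profile'' on each level, which lets us work with the numbers $a^{(i)}_t=|\ff_i\cap\binom{[n]}{t}|$ and convexity/LYM-type inequalities among them.

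The heart of the matter is the following dichotomy. Either all three families are ``concentrated near the middle'' — meaning $\ff_i\subset\{F:m\le|F|\le 2m\}$ up to negligible error — in which case a direct counting using the cross partition-free condition on triples $(F_1,F_2,F_1\cup F_2)$ with $|F_1|=|F_2|=m$, $|F_1\cup F_2|=2m$ shows that one of the three families must essentially omit an entire middle layer, producing at most $3|\mathcal K(3m)|+\frac1m\binom{3m}{2m+1}$ and forcing the Example~4 structure (the slack $\frac1m\binom{3m}{2m+1}=\binom{3m}{m}-\binom{3m}{2m+1}$ is exactly what one family gains by using layer $m$ in place of layer $2m+1$). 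Or some family, say $\ff_1$, contains a set $F$ with $|F|\le m-1$ or $|F|\ge 2m+1$; then $F$ together with its subsets forbids many configurations in $\ff_2$ and $\ff_3$, and I would run an averaging argument over a random ``symmetric chain'' or over random partitions of $[n]$ into blocks of sizes summing appropriately, bounding $|\ff_1|+|\ff_2|+|\ff_3|$ along each chain by the chain's contribution to $3\mathcal K(3m)$ plus a controlled surplus, with equality analysis pinning down the extremal configuration. The condition $m\ge 6$ (here $n\ge18$) is used to ensure the error terms in these averaging bounds are genuinely dominated.

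I expect the main obstacle to be the equality case, not the inequality. The inequality \eqref{eq9} should follow from a sufficiently robust version of the chain-averaging argument (Kleitman's method already gives $3|\mathcal K(3m)|$ with room to spare once one is careful about the asymmetry between levels $m$ and $2m+1$). But characterizing \emph{all} extremal triples requires showing that no ``hybrid'' of Example~4 with a double-type or $\tilde{\mathcal K}_x$-type construction can also achieve equality, and that the three roles in Example~4 are forced up to permutation. This means tracking which chains are tight, propagating tightness between adjacent chains (two chains sharing many sets must make compatible choices), and finally invoking a connectivity/rigidity argument on the ``tightness graph'' of chains to conclude global structure — this is the step that in the $n=3m+1$ case is trivial (unique extremal family) but here genuinely requires the bulk of the work, mirroring the extra difficulty flagged in the abstract.
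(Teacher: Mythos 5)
Your proposal is essentially a strategy sketch rather than a proof, and several of its pivotal steps are either unjustified or incorrect.

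First, the opening move — passing to a down-compressed version of $\ff_1,\ff_2,\ff_3$ and asserting that the cross partition-free property survives — is not established and is in fact dubious. Replacing a set $A$ by a proper subset $A'$ can create a new partition $A'=B\sqcup C$ that did not exist before, so ``replacing a set by a subset whenever that keeps us inside the family'' does not preserve the forbidden-configuration-free property; and for the classical $S_{ij}$ shifts you would at least have to verify that a cross partition in the shifted families pulls back, which is not automatic and is not how the paper (or Kleitman's original argument) proceeds. The proof in the paper never compresses; it works directly with the given families.

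Second, your accounting of the slack term is wrong. In Example~4, the family $\mathcal A$ adds the whole level $m$ on top of $\mathcal K(3m)$, while \emph{both} $\mathcal B$ and $\mathcal C$ drop the level $2m+1$, so the surplus is
$\binom{3m}{m}-2\binom{3m}{2m+1}=\tfrac1m\binom{3m}{2m+1}$,
using $\binom{3m}{m}=\tfrac{2m+1}{m}\binom{3m}{2m+1}$. Your claimed identity $\tfrac1m\binom{3m}{2m+1}=\binom{3m}{m}-\binom{3m}{2m+1}$ is false, and the narrative that ``one family uses layer $m$ in place of layer $2m+1$'' misdescribes the extremal triple: the asymmetry between one family of the form $\mathcal A$ and two families of the form $\mathcal B$ is exactly what the proof must pin down, and you have the combinatorics of the gain wrong from the start.

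Third, and most importantly, the proposal never produces the quantitative engine that actually yields \eqref{eq9}. The paper fixes a weighted auxiliary family $\hh=\hh_1\cup\hh_2\cup\hh_3$ built from a random equipartition $[3m]=H_1^m\sqcup H_2^m\sqcup H_3^m$, with carefully chosen weights on a short list of set sizes $\{m-1,m,m+1,2m-2,2m,2m+1\}$, and proves (Proposition~\ref{prop1}) by a finite case analysis on which of the six $m$- and $2m$-sets are present that the total missing weight is at least $2\binom{n}{m}+5\binom{n}{m-1}$; this is then averaged and combined with the \eqref{eq13}-type inequalities exactly as in Claim~\ref{clach2}. Your ``dichotomy'' and ``chain averaging'' are too coarse to recover the precise constant $\tfrac1m$, and you give no mechanism that would. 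Finally, your suggestion of a ``tightness graph / rigidity'' argument for uniqueness does not match the actual equality analysis, which reduces to showing that the complements of the $(2m+1)$-sets in the three families form pairwise cross-intersecting subfamilies of $\binom{[n]}{m-1}$ and then invokes a sharp cross-intersecting bound (inequality~\eqref{eq20} from~\cite{FK1}); without that input the equality characterization is not reached. In short, the proposal neither supplies the key lemma nor a workable substitute, and at least one of the stated intermediate facts is numerically incorrect.
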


It is natural to extend the notion of partition-free to more sets. Let $r\ge 2$ be an integer. A family $\ff\subset 2^{[n]}$ said to be \underline{$r$-partition-free} if there are no pairwise disjoint members $F_1,\ldots, F_r\in\ff$ such that $F_1\cup\ldots \cup F_r\in\ff$ as well.

For $n=rm+q$, $0\le q<r$ the most natural construction of an $r$-partition-free family is:
$$\mathcal K(n,r):=\{K\subset [n]: m+1\le |K|\le rm+r-1\}.$$
In \cite{F12} it was proven that for $n=rm+r-2$ the unique optimal family is $\mathcal K(n,r)$.

However, for $r\ge 3$ the general situation is complex. It seems to be difficult to find a plausible conjecture covering all congruence classes modulo $r$. We have a few results concerning this and some related questions that will appear in \cite{FK13}.
Let us just state one of them.

An \underline{$r$-box} is a configuration consisting of $2^r-1$ subsets, namely, $r$ pairwise disjoint sets $B_1,\ldots, B_r$ along with all possible non-empty unions of them.

\begin{thm}[\cite{FK13}] Suppose that $n=rm+r-2,$ $m>r^2$ and $\ff\subset 2^{[n]}$ contains no $r$-box. Then
$$|\ff|<|\mathcal K(n,r)| \ \  \ \text{or}\ \ \ \ff=\mathcal K(n,r)\ \ \ \text{hold}.$$
\end{thm}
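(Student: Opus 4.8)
The plan is to compare $\ff$ with $\mathcal{K}(n,r)$ layer by layer. Since $n=rm+r-2<rm+r-1$, the upper size bound in the definition of $\mathcal{K}(n,r)$ is never binding, so $\mathcal{K}(n,r)$ is simply the filter $\{K\subseteq[n]:|K|\ge m+1\}$. Write $\ff=\ff^-\cup\ff^+$, where $\ff^-=\{F\in\ff:|F|\le m\}$ and $\ff^+=\{F\in\ff:|F|\ge m+1\}$, and put $\mathcal{M}:=\mathcal{K}(n,r)\setminus\ff$. Then $\ff^+\subseteq\mathcal{K}(n,r)$ and $|\ff|=|\mathcal{K}(n,r)|-|\mathcal{M}|+|\ff^-|$, so the theorem is equivalent to
\[ |\ff^-|\le|\mathcal{M}|, \qquad\text{with equality only if } \ff^-=\mathcal{M}=\emptyset \]
(indeed $\ff^-=\mathcal M=\emptyset$ says precisely $\ff=\mathcal K(n,r)$, and if $\ff^-=\emptyset$ then $|\ff|=|\mathcal K(n,r)|$ already forces $\ff^+=\mathcal K(n,r)$).

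Next I would isolate the only place where $r$-box-freeness is used. Suppose $G\in\ff^-$ with $1\le|G|=g\le m$, and let $C_1,\dots,C_{r-1}$ be pairwise disjoint subsets of $[n]\setminus G$ with $|C_i|\ge m+1$ and $g+\sum_i|C_i|\le n$. Then $G,C_1,\dots,C_{r-1}$ are $r$ pairwise disjoint sets, and among their $2^r-1$ non-empty unions every one other than $G$ has size at least $m+1$: a union of two or more of the $C_i$ has size $\ge 2(m+1)$, each $C_i$ alone has size $\ge m+1$, and a union containing $G$ together with some $C_i$ has size $\ge m+1$ as well. As $\ff$ has no $r$-box and $G\in\ff$, at least one of these "large" unions must lie in $\mathcal{M}$. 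The counting inequality $n-g\ge n-m=(r-1)(m+1)-1$ shows such $C_i$'s exist (say, of sizes $m+1,\dots,m+1$) exactly when $g\le m-1$; the boundary layer $g=m$, where $[n]\setminus G$ only splits into $r-2$ parts of size $m+1$ plus one part $C_{r-1}$ of size $m$, must be handled separately — either by letting $C_{r-1}$ play the role of $G$, so that whenever $C_{r-1}\in\ff$ a large union is again forced into $\mathcal M$, or, if \emph{every} $m$-subset of $[n]\setminus G$ lies outside $\ff$, by noting that $\ff^-$ is then so sparse that $|\ff^-|<|\mathcal M|$ is immediate.

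The heart of the matter is to upgrade "for every admissible choice of $C_1,\dots,C_{r-1}$ some element of $\mathcal{M}$ is forced" into the quantitative bound $|\ff^-|\le|\mathcal{M}|$, with strict loss once $\ff^-\ne\emptyset$. The natural route is a weighted double count over the set $\mathcal{T}$ of pairs $\bigl(G;(C_1,\dots,C_{r-1})\bigr)$ as above: each $G\in\ff^-$ of a given size contributes the same number of pairs, each pair is charged to some forced $M\in\mathcal{M}$, and a fixed $M$ of size $k$ can be charged by only a controlled number of pairs (governed by the number of ways to cut $M$ into sub-blocks and to complete the configuration to a near-partition of $[n]$); one then arranges the per-size weights so that the two sides balance. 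The hypothesis $m>r^2$ is exactly what makes this bookkeeping go through: it guarantees that for every relevant pair $(g,k)$ the number of admissible partitions of $[n]\setminus G$ dominates the number of preimages a fixed $k$-set can have, so that no $M$ is overcharged and the inequality is strict as soon as a single small set belongs to $\ff$ — which simultaneously pins down $\ff^-=\mathcal M=\emptyset$ as the unique equality configuration. A more combinatorial alternative is to build an explicit injection $\ff^-\hookrightarrow\mathcal{M}$ sending $G$ to a canonically chosen forced superset (for instance the colex-minimal element of $\mathcal M$ produced by the consecutive-block partition of $[n]\setminus G$) and to verify injectivity by recovering $G$ from its image together with the block pattern. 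I expect precisely this step — the size bookkeeping that delivers strictness, together with the clean disposal of the boundary layer $|G|=m$ — to be the main obstacle; the rest is the routine reduction described above.
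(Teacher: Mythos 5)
The theorem you are proving is stated in this paper only as a citation of the preprint \cite{FK13}; the paper does not supply a proof, so there is no in-paper argument for me to compare your proposal against. I therefore assess the proposal on its own merits.

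Your reduction is sound: with $n=rm+r-2$ one indeed has $\mathcal K(n,r)=\{K\subseteq[n]:|K|\ge m+1\}$, and writing $|\ff|=|\mathcal K(n,r)|-|\mathcal M|+|\ff^-|$ correctly recasts the theorem as ``$|\ff^-|\le|\mathcal M|$, with equality only when both are empty.'' The local observation that any $G\in\ff^-$ with $1\le|G|\le m-1$ plus $r-1$ disjoint $(\ge m+1)$-sets yields an $r$-box whose non-$G$ unions all lie in $\mathcal K(n,r)$, hence forces a member of $\mathcal M$, is also right. But the heart of the argument is missing. You announce a weighted double count (or an injection) that would convert ``every such $G$ forces some $M\in\mathcal M$'' into the global bound $|\ff^-|\le|\mathcal M|$, and you assert that $m>r^2$ is ``exactly what makes this bookkeeping go through,'' but you give no bound on how many pairs can charge a fixed $M$, no choice of weights, no verification that the two sides balance, and no argument for strictness when $\ff^-\ne\emptyset$. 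As it stands this is a plan for a proof, not a proof.

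Two more concrete gaps. First, the boundary layer $|G|=m$ is not actually disposed of: with $|[n]\setminus G|=(r-1)(m+1)-1$ the last block $C_{r-1}$ has size $m$, and if $C_{r-1}\notin\ff$ the configuration forces nothing, because $C_{r-1}$ has size $m$ and therefore does not contribute to $\mathcal M$ either. Your fallback ``if every $m$-subset of $[n]\setminus G$ lies outside $\ff$ then $\ff^-$ is so sparse that $|\ff^-|<|\mathcal M|$ is immediate'' is a non sequitur: that hypothesis concerns one fixed $G$ and one layer of $\ff^-$, and it produces no lower bound on $|\mathcal M|$ at all. Second, you never treat $G=\emptyset$, which $\mathcal K(n,r)$ excludes and which gives no non-degenerate $r$-box by your construction; whether this case is vacuous depends on whether one reads the $r$-box definition as requiring the $2^r-1$ unions to be distinct, and you should say which convention you are using and why $\emptyset\in\ff$ is then ruled out or absorbed. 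Until the counting lemma is written out and the $|G|=m$ and $G=\emptyset$ cases are closed, the proof is incomplete.
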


In the papers \cite{FK7}, \cite{FK8}, \cite{FK9} the authors advanced in related problems of Erd\H os and Kleitman on families that contain no $s$ pairwise disjoint sets.

Kleitman \cite{Kl76} considered the following related problem. What is the maximum size $u(n)$ of a family $\ff\subset 2^{[n]}$ without three distinct members satisfying $A\cup B = C$. The difference with partition-free families is that one does not require $A$ and $B$ to be disjoint.
Kleitman proves $u(n)\le {n\choose \lfloor n/2\rfloor}(1+\frac cn)$ for some absolute constant $c$.

An ``abstract'' version of this problem was solved by Katona and Tarjan \cite{KT}. Let $v(n)$ denote the maximum size of a family $\ff$ without three distinct members $A,B,C$ such that $A\subset C$ and $B \subset C$. Katona and Tarjan proved that $v(2m+1) = 2{2m\choose m}$.

This result was the starting point of a lot of research. The central problem might be stated as to determine the largest size of subsets of the boolean lattice without a certain subposet. We refer the reader to the survey \cite{GL}. One of the important recent advancements in the topic was the result of \cite{MP}, where the authors showed that for any finite poset there exists a constant $C$, such that the largest size of a family without an induced copy of this poset has size at most $C{n\choose \lfloor n/2\rfloor}$. However, the value of $C$ is unknown in most cases, including the ``diamond'' poset, and we hope that the methods developed in the present paper would be helpful to attack these problems.

Suppose that $\ff\subset 2^{[n]}$ has no three sets $A, B, C$, such that $|A\cap B|\le s$ and $A\cup B=C$. How large a family $\ff$ can be? A natural generalization of Example 1 suggests the family $\mathcal K_s(n):=\{K\subset [n]: m\le |K|<2m-s\}$ for some $m<n$, where $m$ is chosen so that the cardinality of $\mathcal K_s(n)$ is maximized.  In the discussion section we speak about how much we can advance in this problem using our methods.\\

The structure of the remaining part of the paper is as follows. In the next section we develop some of the basic tools we use. In Section 3 we prove the $n=3m+1$ case of Theorem~\ref{thm2}, which is the easiest result and which allows the reader to get familiar with some of the methods. In Section 4 we prove the $n=3m+2$ case of Theorem~\ref{thm2}, which also implies the $n=3m+2$ case of Theorem~\ref{thm1}. In Section 5 we prove the $n=3m$ case of Theorem~\ref{thm1}, which is the hardest proof in the paper. Finally, in Section 6 we prove Theorem~\ref{thm3}. In Section~7 we discuss our results and related questions.

\section{Basic tools}

For a family $\ff_i\subset 2^{[n]}$ and an integer $t$, $0\le t\le n$, we define $\ff_i^{(t)}:=\{F\in\ff:|F|=t\}$ and $f_i^{t}=|\ff_i^{(t)}|$. Let $y_i^{t}:={n\choose t}-f_i^{t}$ denote the number of $t$-sets missing from $\ff_i$. For a single family $\ff$ we use the notation $f^t, y^t$.

The following lemma is a generalization of the main lemma from Kleitman's paper \cite{Kl2}. We use the following notation: for $i\in [3]$, let $i_+=i+1,i_-=i-1,$ with $3_+=1$ and $1_-=3$ (so that we always have $\{i,i_+,i_-\}=[3]$).

\begin{lem}\label{lem1}
Suppose that $\ff_1,\ff_2,\ff_3\subset 2^{[n]}$ are cross partition-free. Let $s_1,s_2,s_3$ be nonnegative integers satisfying  $s_1+s_2+s_3 \le n$. Then the following inequality holds.
\begin{equation}\label{eq1}\sum_{i=1}^3\frac{y^{s_i}_i}{{n\choose s_i}}+\frac{y^{s_{i_+}+s_{i_-}}_i}{{n\choose s_{i_+}+s_{i_-}}}\ge 2.\end{equation}
\end{lem}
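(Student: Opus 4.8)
The plan is to reduce the inequality to a statement about a single pair of "levels" and then apply a Kleitman-type averaging/compression argument. Fix the three sizes $s_1,s_2,s_3$ with $s_1+s_2+s_3\le n$, and write $t_i=s_{i_+}+s_{i_-}$, so that for each $i$ we pair the $s_i$-level with the $t_i$-level. The key observation is the following local statement: for each $i$, consider $s_i$-sets drawn from $\ff_{i_+}$ together with disjoint $s_i$-sets drawn from $\ff_{i_-}$, whose union (of size $t_i$) would have to lie outside $\ff_i$ by the cross partition-free hypothesis. First I would set up the natural bipartite-type incidence counting: count triples $(A,B,C)$ with $A\in\binom{[n]}{s_{i_+}}$, $B\in\binom{[n]}{s_{i_-}}$, $A\cap B=\emptyset$, $C=A\sqcup B$, and observe that cross partition-freeness forces, for every such triple, that at least one of $A\notin\ff_{i_+}$, $B\notin\ff_{i_-}$, $C\notin\ff_i$ holds. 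Summing a suitably normalized version of this over all $i$ and over all such triples is what produces the constant $2$ on the right-hand side.

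The main engine is a normalization so that each "missing set" at level $\ell$ contributes weight $1/\binom{n}{\ell}$, matching the terms $y^{s_i}_i/\binom{n}{s_i}$ and $y^{t_i}_i/\binom{n}{t_i}$ in \eqref{eq1}. Concretely, I would fix a random chain (or, equivalently, a uniformly random permutation of $[n]$) and look at the three nested initial segments of sizes $s_1,s_2,s_3$; more robustly, I would partition (a random relabeling of) $[n]$ into blocks and track which of the relevant sets are present in the corresponding $\ff_i$. The point of Kleitman's original argument is that along such a random structure the events "the $s_i$-segment is missing from $\ff_i$" and "the $t_i$-segment is missing from $\ff_i$" cannot all be avoided too often: whenever none of the three small sets is missing and their unions are present, one gets a forbidden cross-partition configuration. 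Carefully bookkeeping the expectations of the indicator variables, and using $s_1+s_2+s_3\le n$ so that the three small sets can be chosen pairwise disjoint inside $[n]$, yields $\sum_i \big(\Pr[\text{$s_i$-set missing from }\ff_i]+\Pr[\text{$t_i$-set missing from }\ff_i]\big)\ge 2$, which is exactly \eqref{eq1} once one checks that each probability equals the corresponding ratio $y/\binom{n}{\cdot}$.

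The step I expect to be the main obstacle is getting the combinatorics of the disjoint triples to line up with the symmetric cyclic indexing $i\mapsto(i_+,i_-)$ so that the bound is genuinely $2$ and not something weaker like $1$. In Kleitman's single-family case one pairs a set with its complement-type partner and the "$2$" comes from a clean double count; here, with three families and the split $t_i=s_{i_+}+s_{i_-}$, one must choose the random partition of $[n]$ into four parts of sizes $s_1,s_2,s_3,n-s_1-s_2-s_3$ and consider, for each $i$, the set $S_i$ of size $s_i$ versus the set $S_{i_+}\cup S_{i_-}$ of size $t_i$. The forbidden configuration $S_{i_+}\in\ff_{i_+}$, $S_{i_-}\in\ff_{i_-}$, $S_{i_+}\cup S_{i_-}\in\ff_i$ (disjointness is automatic from the partition) means: for a fixed random partition, one cannot simultaneously have, for all three $i$, that $S_i\in\ff_i$ and $S_{i_+}\cup S_{i_-}\in\ff_i$. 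Hence among the six events "$S_i\notin\ff_i$" and "$S_{i_+}\cup S_{i_-}\notin\ff_i$" ($i=1,2,3$) at least two must occur (one cannot have at most one failure, since a single failure still leaves a valid forbidden triple on the other two indices — this is where one must argue carefully, possibly checking the small cases of which single event fails). Taking expectations over the uniform random partition and noting $\E[\mathbf 1\{S_i\notin\ff_i\}]=y_i^{s_i}/\binom{n}{s_i}$ and $\E[\mathbf 1\{S_{i_+}\cup S_{i_-}\notin\ff_i\}]=y_i^{t_i}/\binom{n}{t_i}$ gives \eqref{eq1}. I would present the "at least two failures" claim as the crux and verify it by a short case analysis on which of the three indices, if any, is the unique failure.
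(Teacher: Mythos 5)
Your proposal is correct and matches the paper's proof: the paper isolates exactly the combinatorial crux you identify (for a fixed partition of $[n]$ into disjoint parts $S_1,S_2,S_3$ of sizes $s_1,s_2,s_3$, at least two of the six events $S_i\notin\ff_i$, $S_{i_+}\cup S_{i_-}\notin\ff_i$ must occur — this is its Claim~\ref{cla1}, proved by the same short case split you sketch) and then averages over a uniformly random such partition to turn indicator expectations into the ratios $y/\binom{n}{\cdot}$. The only cosmetic difference is that your opening paragraphs flirt with a weaker per-index ``one-of-three missing'' count before converging on the right ``two-of-six'' claim; once you commit to that claim, the argument is the paper's.
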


We deduce \eqref{eq1} using the following claim.
\begin{cla}\label{cla1} Let $S_1,S_2,S_3\subset [n]$ be pairwise disjoint sets satisfying $|S_i|=s_i,i=1,2,3.$ Suppose that $\ff_1,\ff_2,\ff_3\subset 2^{[n]}$ are cross partition-free. Then
\begin{equation}\label{eq11}\sum_{i=1}^3|\ff_i\cap\{S_i,S_{i_+}\cup S_{i_-}\}|\le 4.\end{equation}
\end{cla}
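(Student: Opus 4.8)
The plan is to reformulate the left-hand side of \eqref{eq11} in terms of the complementary sets $T_i:=S_{i_+}\cup S_{i_-}$, $i\in[3]$. Since $S_1,S_2,S_3$ are pairwise disjoint, $T_i$ is the \emph{disjoint} union $S_{i_+}\sqcup S_{i_-}$, and $\{S_i,\,S_{i_+}\cup S_{i_-}\}=\{S_i,T_i\}$, so the quantity to be bounded is $\sum_{i=1}^3|\ff_i\cap\{S_i,T_i\}|$. Setting $p:=\#\{i\in[3]:S_i\in\ff_i\}$ and $q:=\#\{i\in[3]:T_i\in\ff_i\}$, this sum is exactly $p+q$ unless $S_i=T_i$ for some $i$; but $S_i=T_i$ forces $S_1,S_2,S_3$ all empty, whence each $\{S_i,T_i\}=\{\emptyset\}$ and the sum is at most $3$. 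So it suffices to prove $p+q\le 4$.

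The key observation is: for every $i\in[3]$, if $S_{i_+}\in\ff_{i_+}$ and $S_{i_-}\in\ff_{i_-}$, then $T_i\notin\ff_i$. Indeed, otherwise the triple $(S_{i_+},S_{i_-},T_i)$ violates cross partition-freeness: because $\{i,i_+,i_-\}=[3]$ we have chosen exactly one set from each of $\ff_1,\ff_2,\ff_3$, and $T_i=S_{i_+}\sqcup S_{i_-}$ is the disjoint union of the other two. (Note this remains valid in degenerate situations where some $S_j$ is empty, since $T_i$ is still the disjoint union of $S_{i_+}$ and $S_{i_-}$.)

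Then I would finish with a short case analysis on $p$. If $p\le 1$, then trivially $p+q\le 1+3=4$. If $p=2$, let $j$ be the unique index with $S_j\notin\ff_j$; then $\{j_+,j_-\}$ is precisely the set of indices $i$ with $S_i\in\ff_i$, so the key observation applied with $i=j$ gives $T_j\notin\ff_j$, hence $q\le 2$ and $p+q\le 4$. If $p=3$, the key observation applies for every $i\in[3]$ and yields $T_i\notin\ff_i$ for all $i$, so $q=0$ and $p+q=3\le 4$. This establishes \eqref{eq11}, and Lemma~\ref{lem1} then follows by averaging \eqref{eq11} over all ordered triples $(S_1,S_2,S_3)$ of pairwise disjoint sets of the prescribed sizes.

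I do not expect a real obstacle here: once one sees that among the eight triples $(X_1,X_2,X_3)$ with $X_i\in\{S_i,T_i\}$ the only disjoint-union configurations are the three with a single ``$T$'', namely $T_i=S_{i_+}\sqcup S_{i_-}$, the statement reduces to the elementary counting above. The only points needing a little care are the index bookkeeping (that $i=j$ names exactly the missing index when $p=2$) and the harmless degenerate cases where some $S_i$ is empty or two of the six sets coincide, both of which are absorbed by the first paragraph.
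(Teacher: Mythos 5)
Your proposal is correct and follows essentially the same route as the paper: the ``key observation'' you isolate (that $S_{i_+}\in\ff_{i_+}$, $S_{i_-}\in\ff_{i_-}$ forces $T_i\notin\ff_i$) is exactly what the paper uses, both for the case where all three $S_i$ are present and, in contrapositive form, for the case where some $S_i$ is missing. Your $p,q$ bookkeeping and the explicit treatment of the degenerate $S_i=\emptyset$ case make the argument a bit more verbose but do not change its substance.
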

\begin{proof} If $S_i\in \ff_i$ holds for each $i=1,2,3,$ then $S_{i_+}\cup S_{i_-}\notin \ff_i$ for each $i$, and \eqref{eq11} holds. Now, by symmetry, we may assume that $S_3\notin \ff_3$. By the cross partition-free property, one of the relations $S_1\notin \ff_1,\ S_2\notin\ff_2,\ S_1\cup S_2\notin \ff_3$ holds, completing the proof of \eqref{eq11}.
\end{proof}

\begin{proof}[Proof of Lemma \ref{lem1}] Let us choose the pairwise disjoint sets from the claim randomly with uniform distribution. Then for $S_i\subset [n], |S_i|=s_i$, the probability of $S_i\notin \ff_i$ is $\frac {y_i^{s_i}}{{n\choose s_i}}$.  That is, the LHS of \eqref{eq1} counts the expected number of missing sets among the 6 sets $S_i, S_i\cup S_j, i,j\in[3]$. On the other hand, by Claim~\ref{cla1}, this number is always at least 2, concluding the proof.
\end{proof}

For a partition-free family $\ff\subset 2^{[n]}$ we can set $\ff_i:=\ff$ and infer:
\begin{equation}\label{eq12} \sum_{i=1}^3\frac{y^{s_i}}{{n\choose s_i}}+\frac{y^{s_{i_+}+s_{i_-}}}{{n\choose s_{i_+}+s_{i_-}}}\ge 2.\end{equation}

Kleitman \cite{Kl2} discovered this inequality and he proved \eqref{eq7} using a cleverly chosen linear combination of \eqref{eq12} for a specific choice of a set of values of $(s_1,s_2,s_3)$.
We are going to adopt this strategy for the proof of Theorem~\ref{thm2} in the case $n=3m+1$.

\begin{cor} Let $s_1,s_2,s_3$ be nonnegative integers, $s_1+s_2+s_3\le n$. Suppose that $\ff_1,\ff_2,\ff_3\subset 2^{[n]}$ are cross partition-free. Then
\begin{equation}\label{eq13} \sum_{j=1}^3\sum_{i=1}^3\frac{y^{s_i}_j}{{n\choose s_i}}+\frac{y^{s_{i_+}+s_{i_-}}_j}{{n\choose s_{i_+}+s_{i_-}}}\ge 6.\end{equation}
\end{cor}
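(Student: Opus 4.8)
The Corollary is an immediate consequence of Lemma~\ref{lem1}. The plan is to apply \eqref{eq1} not to a single ordering of the triple $(s_1,s_2,s_3)$, but to exploit that the cross partition-free hypothesis is symmetric in the three families in the sense that the inequality can be re-indexed. More precisely, I would observe that for cross partition-free families $\ff_1,\ff_2,\ff_3$, any permutation $\pi$ of $\{1,2,3\}$ gives that $\ff_{\pi(1)},\ff_{\pi(2)},\ff_{\pi(3)}$ are again cross partition-free (the definition is manifestly symmetric under permuting the families). Hence Lemma~\ref{lem1} applies to each such permuted triple with the same set sizes $(s_1,s_2,s_3)$.

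The key step is to write down \eqref{eq1} for three carefully chosen permutations and add them. Cycling the roles of the families, I would apply the lemma to the triples $(\ff_1,\ff_2,\ff_3)$, $(\ff_2,\ff_3,\ff_1)$, and $(\ff_3,\ff_1,\ff_2)$. Each application yields a sum of the form $\sum_{i=1}^3\left(\frac{y^{s_i}_{j}}{\binom{n}{s_i}}+\frac{y^{s_{i_+}+s_{i_-}}_{j}}{\binom{n}{s_{i_+}+s_{i_-}}}\right)\ge 2$, where across the three applications the outer family index $j$ runs over all of $\{1,2,3\}$ — but one has to be slightly careful that the same index $i$ gets paired with the same $s_i$ in every application. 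Because the size parameters $s_1,s_2,s_3$ are held fixed while only the family labels are cyclically permuted, the three inequalities together contribute, for each $j\in\{1,2,3\}$, exactly the inner sum $\sum_{i=1}^3\left(\frac{y^{s_i}_j}{\binom{n}{s_i}}+\frac{y^{s_{i_+}+s_{i_-}}_j}{\binom{n}{s_{i_+}+s_{i_-}}}\right)$. Summing the three ``$\ge 2$'' bounds gives the left-hand side of \eqref{eq13} on one side and $6$ on the other, which is exactly the claimed inequality.

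I do not anticipate a genuine obstacle here; the only point requiring care is the bookkeeping of indices. One must make sure that when the lemma is applied to the permuted triple, the term it produces really is $\frac{y^{s_i}_j}{\binom{n}{s_i}}+\frac{y^{s_{i_+}+s_{i_-}}_j}{\binom{n}{s_{i_+}+s_{i_-}}}$ with the \emph{same} correspondence $i\mapsto s_i$ as in \eqref{eq13}, rather than a relabelled version in which $s_i$ has been permuted too. The cleanest way to phrase this is: fix the disjoint sets $S_1,S_2,S_3$ (with $|S_i|=s_i$) once and for all, invoke Claim~\ref{cla1} directly for the three cyclic relabellings of the families, take expectations exactly as in the proof of Lemma~\ref{lem1}, and add. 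This avoids any ambiguity about which index is being permuted and makes the $\ge 6$ bound transparent.
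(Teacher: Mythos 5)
Your proof is correct and is essentially the paper's argument: the paper sums \eqref{eq1} over the three cyclic permutations of $(s_1,s_2,s_3)$ with the families held fixed, while you sum over the three cyclic permutations of $(\ff_1,\ff_2,\ff_3)$ with the sizes held fixed; these produce exactly the same collection of eighteen terms on the left-hand side. Your closing remark about going back to Claim~\ref{cla1} with a fixed choice of $S_1,S_2,S_3$ is also a clean way to see the identity and avoids any index confusion.
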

\begin{proof}Apply \eqref{eq11} for $(s_1,s_2,s_3)$, $(s_2,s_3,s_1)$, $(s_3,s_1,s_2)$, and sum them up.\end{proof}
We are going to use the following inequality in the proofs:
\begin{equation}\label{eq005}\sum_{j=0}^{k-1}{n\choose j}< {n\choose k}\ \ \ \ \text{for any }k\le \frac n3.\end{equation}
Indeed, we have $\frac{{n\choose j}}{{n\choose j-1}} = \frac{n-j+1}{j}> 2$ for any $j\le \frac n3$, so, by the formula for the summation of a geometric progression, the inequality (\ref{eq005}) holds.

\section{The proof of Theorem~\ref{thm2} for $n=3m+1$}
Consider cross partition-free families $\ff_1,\ff_2,\ff_3\subset 2^{[n]}$.
The ideal case would be to prove an inequality of the form

\begin{equation}\label{eq14}\sum_{t=0}^n\sum_{i=1}^3\beta(t) y^{t}_i\ge 3\sum_{\substack{t\in[0,m] \cup\\ [2m+2,3m+1]}}{n\choose t}\end{equation}
 with $\beta(t)$ satisfying $\beta(t)=1$ for $0\le t\le m, 2m+2\le t\le 3m+1$ and $0\le \beta(t)<1$ for $m+1\le t\le 2m+1.$ Should we succeed with this plan, we would obtain
$$\sum_{t=0}^n\sum_{i=1}^3y^{t}_i\ge 3\sum_{\substack{t\in[0,m] \cup\\ [2m+2,3m+1]}}{n\choose t}$$
with strict inequality unless $y_j^{t}=0$ for all $t$ with $m+1\le t\le 2m+1$. That is, the only way to achieve equality is $\ff_1=\ff_2=\ff_3=\mathcal K(3m+1)$. However, we could accomplish this only partly. Namely, with $\beta(2m+1)=1$. Therefore, at the end of the proof we need to show separately that $y_i^{2m+1}=0$ for $i=1,2,3.$

We shall produce \eqref{eq14} as the sum of $m+1$ inequalities.

\begin{wraptable}{r}{0.3\linewidth}
\begin{tabular}[t]{|c|c|c|}
\hline
$s_1$&$s_2$&$s_3$\\
\hline
$m$&$m$&$m+1$\\
$m-1$&$m+1$&$m+1$\\
$m-2$&$m+1$&$m+2$\\
$m-3$&$m+1$&$m+3$\\
$\vdots$&$\vdots$&$\vdots$\\
$m-j$&$m+1$&$m+j$\\
$\vdots$&$\vdots$&$\vdots$\\
$0$&$m+1$&$2m$\\
\hline
\end{tabular}
\begin{center}Table 1\end{center}\label{tab1}
\end{wraptable}

The first one is an application of \eqref{eq13} with $s_1=s_2=m,\ s_3=m+1$. Multiplying \eqref{eq13} by $\frac 12 {n\choose m}$ we obtain $$\sum_{i=1}^3y_i^{m}+\frac{{n\choose m}}{2{n\choose m+1}}y^{m+1}_i+\frac {{n\choose m}}{2{n\choose 2m}}y_i^{2m}+y_i^{2m+1}\ge 3{n\choose m}$$
The remaining $m$ triples are listed in Table~1. We use \eqref{eq13} multiplying both sides with ${n\choose m-i}$. For $j=1$ we get
$$\sum_{i=1}^3y_i^{m-1}+y_i^{2m+2}+\frac{2{n\choose m-1}(y_i^{m+1}+y_i^{2m})}{{n\choose m+1}}\ge 6{n\choose m-1}.$$
The inequality for $j\ge 2$ is as follows
$$\sum_{i=1}^3y_i^{m-j}+y_i^{2m+j+1}+\frac{{n\choose m-1}(y_i^{m+1}+y_i^{2m})}{{n\choose m+1}}+\frac{{n\choose m-1}(y_i^{m+j}+y_i^{2m-j+1})}{{n\choose m+j}}\ge 6{n\choose m-j}.$$

Note that, since $n=3m+1$, we have
$${n\choose m+1}=\frac{2m+1}{m+1}{n\choose m}.$$
It is not difficult to see that, summing up the inequalities from Table~\ref{tab1}, the coefficients in front of each $y_i^{t}$, $m+1\le t\le 2m$, are smaller than $1$.  Indeed, for $t=m+1$ or $2m$, we have \begin{small} $$\beta(t) = \frac{\frac 12{n\choose m}+2{n\choose m-1}+1+\sum_{j=2}^m{n\choose m-j}}{{n\choose m+1}}\overset{\eqref{eq005}}{\le} \frac{\frac 12{n\choose m}+3{n\choose m-1}}{{n\choose m+1}}=\frac{\frac 12\big(1+\frac{3m}{m+1}\big){n\choose m}}{{n\choose m+1}}<\frac{\frac{2m+1}{m+1}{n\choose m}}{{n\choose m+1}}=1.$$\end{small}
For $2\le t \le m-1$ we have $\beta(m+t) = \frac{{n\choose m-t}+{n\choose t-1}}{{n\choose m+t}}$. We know that ${n\choose m+t}>{n\choose t}>2{n\choose t-1}$ and ${n\choose m+t}>{n\choose m-t+1}>2{n\choose m-t}$. Thus, ${n\choose m+t}>{n\choose t-1}+{n\choose m-t}$ and $\beta(m+t)<1$.

We also note that $\beta(t)=1$ for $t\le m$ and $t\ge 2m+1$.
Therefore, the inequality \eqref{eq8} for $n=3m+1$ is verified. Moreover, for any triple of families, for which we have equality in \eqref{eq8}, all of them must contain all the sets of sizes from $m+1$ to $2m$. We are only left to prove that in the case of equality all $(2m+1)$-sets are present in each $\ff_i$.

We are going to use the fact that for any triple of families for which equality holds in \eqref{eq8}, equality in \eqref{eq11} must hold for any choice of $S_1,S_2,S_3$.
Assume that there is a set $A, |A| = 2m+1$, which is not in $\ff_1$. Take two $m$-sets $B,C$, such that $B\cap C=\emptyset, B\cup C\subset A$. Then, since all $2m$-sets are contained in $\ff_1$, either $B\notin \ff_2$, or $C\notin \ff_3$. W.l.o.g., assume that $B\notin \ff_2$. Put $S_1:=[n]\setminus A, S_2:=B, S_3:=A\setminus B$. Then we have $S_2\cup S_3\notin \ff_1$, $S_2\notin \ff_2$. Moreover, one of the $S_1\notin\ff_1,S_3\notin\ff_3, S_1\cup S_3\notin\ff_2$ must hold. Therefore, the equality in \eqref{eq11} does not hold for this choice of $S_i$, a contradiction. This completes the proof of Theorem~\ref{thm2} in the case $n=3m+1$.

\section{The proof of Theorems \ref{thm1} and \ref{thm2} for $n=3m+2$.}


Assume that $m\ge 6$ and put $n=3m+2$ for this section. Since Theorem~\ref{thm2} implies Theorem~\ref{thm1} for $n=3m+2$, it is sufficient to prove the former. Consider cross partition-free families $\ff_1,\ff_2,\ff_3\subset 2^{[n]}$. Take three pairwise disjoint $(m-1)$-element sets $H_1^{m-1},H_2^{m-1},H_3^{m-1}$. For each such triple we define three groups of sets, indexed by $i\in[3]$, of sizes $m-1,\ldots, m+3,2m-2,\ldots, 2m+3,$  and assign them weights. Assume for simplicity that $[n]\setminus\cup_{i=1}^3 H_i^{m-1} =[5]$.

In what follows we define the $i$-th group (see Figure~\ref{figf1}). For the definition of the sets choose $j,k$ such that $\{i,j,k\}=[3]$.  The group contains (note that the upper index indicate the size of the set)
\begin{itemize}
\item one $(m-1)$-set $H^{m-1}_i$.
\item one $m$-set: $H_i^{m}:= H_i^{m-1}\cup \{i\}$,
\item two $(m+1)$-sets: for $x=4,5$ we have $H_i^{m+1}(x):=H_i^m\cup\{x\};$
\item four {\it lateral} $(m+2)$-sets: for $x=4,5$ we have $H_i^{m+2}(j,x):=H_i^m\cup\{j,x\};$
\item one {\it central} $(m+2)$-set $H_i^{m+2}:=H_i^m\cup\{4,5\}$;
\item two $(m+3)$-sets $H_i^{m+3}(j):= H_i^{m+2}\cup\{j\}$;
\item one $(2m-2)$-set: $H_i^{2m-2}:=H_{j}^{m-1}\cup H_k^{m-1}$;
\item two $(2m-1)$-sets:  $H_i^{2m-1}(j):=H_{j}^m\cup H_k^{m-1}$;
\item one $2m$-set: $H_i^{2m}:=H_{j}^m\cup H_k^m$;
\item two $(2m+1)$-sets: for $\{x,y\}=\{4,5\}$ we have $H_i^{2m+1}(y):=[n]\setminus H_i^{m+1}(x)$;
\item one $(2m+2)$-set: $H_i^{2m+2}:=[n]\setminus H_i^m$.
\item one $(2m+3)$-set: $H_i^{2m+3}:=[n]\setminus H_i^{m-1}$.
\end{itemize}
\begin{figure}[t!]
\begin{tikzpicture}[scale=1, transform shape]
\tikzstyle{every node}=[fill=white,  inner sep=2pt];
 \node(u1) at (0,0) {$H_i^{m-1}$};
 \node[fill=white](u2) at (0,1.5) {$H_i^m$} edge[thick] (u1);
\node[fill=white] at (0,0.75) {{\scriptsize $+i$}};

 \node[fill=white](u3) at (-1.5,3) {$H_i^{m+1}(4)$} edge[thick] (u2);
\node[fill=white] at (-1,2.25) {{\scriptsize $+4$}};

 \node[fill=white](u4) at (1.5,3) {$H_i^{m+1}(5)$} edge[thick] (u2);
\node[fill=white] at (+1,2.25) {{\scriptsize $+5$}};

\node[fill=white](u5) at (0,4.5) {$H_i^{m+2}$} edge[thick] (u3) edge[thick] (u4);
\node[fill=white] at (+0.8,3.75) {{\scriptsize $+4$}};
\node[fill=white] at (-0.8,3.75) {{\scriptsize $+5$}};

\node[fill=white](u6) at (-4.5,4.5) {$H_i^{m+2}(4,j)$} edge[thick] (u3);
\node[fill=white] at (-3.2,3.75) {{\scriptsize $+j$}};

\node[fill=white](u7) at (-2,4.5) {$H_i^{m+2}(4,k)$} edge[thick] (u3);
\node[fill=white] at (-1.7,3.75) {{\scriptsize $+k$}};

\node[fill=white](u8) at (+4.5,4.5) {$H_i^{m+2}(5,k)$} edge[thick] (u4);
\node[fill=white] at (+3.2,3.75) {{\scriptsize $+k$}};

\node[fill=white](u9) at (+2,4.5) {$H_i^{m+2}(5,j)$} edge[thick] (u4);
\node[fill=white] at (+1.7,3.75) {{\scriptsize $+j$}};

\node[fill=white](u10) at (+2,6) {$H_i^{m+3}(j)$} edge[thick] (u5) edge[thick] (u6) edge[thick] (u9);

\node[fill=white](u11) at (-2,6) {$H_i^{m+3}(k)$} edge[thick] (u5) edge[thick] (u7) edge[thick] (u8);
\node[fill=white] at (2,5.15) {{\scriptsize $+4$}};
\node[fill=white] at (-2,5.15) {{\scriptsize $+5$}};
\node[fill=white] at (0.75,5.05) {{\scriptsize $+j$}};
\node[fill=white] at (-0.75,5.05) {{\scriptsize $+k$}};


\node(v1) at (8,0) {$H_i^{2m+3}$};
 \node[fill=white](v2) at (8,1.5) {$H_i^{2m+2}$} edge[thick] (v1);
\node[fill=white] at (8,0.75) {{\scriptsize $-i$}};

 \node[fill=white](v3) at (6.5,3) {$H_i^{2m+1}(5)$} edge[thick] (v2);
\node[fill=white] at (7,2.25) {{\scriptsize $-4$}};

 \node[fill=white](v4) at (9.5,3) {$H_i^{2m+1}(4)$} edge[thick] (v2);
\node[fill=white] at (+9,2.25) {{\scriptsize $-5$}};

\node[fill=white](v5) at (8,4.5) {$H_i^{2m}$} edge[thick] (v3) edge[thick] (v4);
\node[fill=white] at (+8.8,3.75) {{\scriptsize $-4$}};
\node[fill=white] at (7.2,3.75) {{\scriptsize $-5$}};

\node[fill=white](v10) at (+9.5,6) {$H_i^{2m-1}(k)$} edge[thick] (v5);
\node[fill=white] at (8.75,5.26) {{\scriptsize $-j$}};

\node[fill=white](v11) at (6.5,6) {$H_i^{2m-1}(j)$} edge[thick] (v5);
\node[fill=white] at (7.25,5.25) {{\scriptsize $-k$}};
\node[fill=white](v12) at (8,7.5) {$H_i^{2m-2}$} edge[thick] (v11) edge[thick] (v10);
\node[fill=white] at (8.75,6.75) {{\scriptsize $-k$}};
\node[fill=white] at (7.25,6.75) {{\scriptsize $-j$}};

\draw[dashed,red] (u1) to[bend right] (v1);
\draw[dashed,red] (u2) to[bend right] (v2);
\draw[dashed,red] (u3) to[bend right] (v3);
\draw[dashed,red] (u4) to[bend right] (v4);
\draw[dashed,red] (u5) to[bend right] (v5);
\draw[dashed,red] (u10) to[bend right] (v10);
\draw[dashed,red] (u11) to[bend right] (v11);

\end{tikzpicture}
\caption{The family $\hh_i$. Adding/substracting the element marked on the edge from the lower set, one gets the upper set. Dashed lines connect complementary sets.}\label{figf1}
\end{figure}
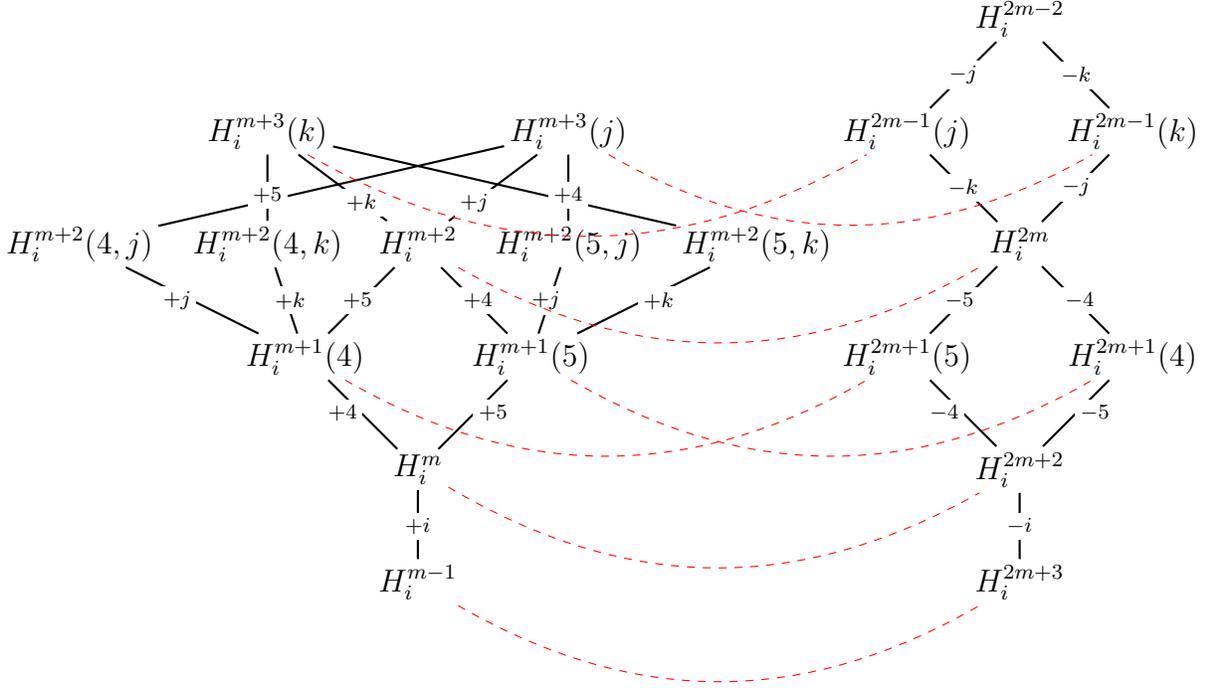

We note that $m+3<2m-2$ since $m\ge 6$.
Each set $H$ in each group gets some weight $w(H)$, which is defined by the following two conditions: the weights of two sets of the same size $j$ are the same and the total weight of $k$-sets in one group sums up to $c_k{n\choose k}$, where $c_{m+2}=c_{2m}=\frac 34, c_{m+3}=c_{2m-1}=c_{2m-2}=\frac 12$, and $c_j=1$ otherwise. E.g., the weight of each $(m+3)$-set is $\frac 1{4}{n\choose m+3}$. The only exception for the first condition are the $(m+2)$-sets, where the central set satisfies $w(H_i^{m+2})=\frac 38{n\choose m+2}$, and each lateral set satisfies $w(H^{m+2}_i(j,x))=\frac 3{32}{n\choose m+2}$. For convenience, we put $c_k=0$ for all values of $k$ not represented in the list above.

The family of all sets from the $i$-th group that have nonzero weight for a given choice of $H_i^{m-1}$ we denote $\hh_i$.


The following claim is essential for the proof.
\begin{cla}\label{clach} To prove \eqref{eq8}, it is sufficient to show that for any choice of three pairwise disjoint $(m-1)$-sets $H_i^{m-1},$ $i=1,2,3$, we have
\begin{equation}\label{eq76}\sum_{i=1}^3\sum_{F\in \ff_i\cap \mathcal H_i}w(F)\le 3\sum_{k=m+1}^{m+3}c_k{n\choose k}+3\sum_{k=2m-2}^{2m+1}c_k{n\choose k}.\end{equation}
Moreover, \eqref{eq76} implies that for any cross partition-free triple of families of maximal total size each of its families contains all sets of sizes $t\in[m+2,2m]$.
\end{cla}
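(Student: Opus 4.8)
The plan is to average \eqref{eq76} over a random local configuration, turning it into a single linear inequality among the numbers $f_i^t$, from which both \eqref{eq8} and the claimed structure of the extremal triples can be read off.

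\emph{Step 1 (averaging).} I would choose an ordered triple $(H_1^{m-1},H_2^{m-1},H_3^{m-1})$ of pairwise disjoint $(m-1)$-sets together with a labelling of the five remaining points as $1,\dots,5$, all uniformly at random; then the whole configuration, and hence each $\hh_i$, is invariant under the symmetric group on $[n]$. Fix $i$, a size $t$ occurring in a group, and a $t$-set $F$. Since $\hh_i$ always contains the same number of sets of size $t$ and, by the normalisation of the weights, their total weight is always $c_t{n\choose t}$, this symmetry forces the expected weight that $F$ contributes to equal $c_t$ for \emph{every} $t$-set $F$ (so the central/lateral split at size $m+2$ is immaterial). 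Hence
\[\E\Bigl[\sum_{F\in\ff_i\cap\hh_i}w(F)\Bigr]=\sum_{t}c_t f_i^t,\]
the sum being over the eleven represented sizes. Averaging \eqref{eq76} then gives $\sum_{i=1}^3\sum_t c_t f_i^t\le 3\sum_t c_t{n\choose t}$, where on the right $t$ runs only over $m+1,m+2,m+3$ and $2m-2,\dots,2m+1$; since $c_{m-1}=c_m=c_{2m+2}=c_{2m+3}=1$, moving those four terms to the other side produces the following chain, which I call $(\star)$:
\[\sum_{i=1}^3\bigl(f_i^{m-1}+f_i^m+f_i^{2m+2}+f_i^{2m+3}\bigr)\ \le\ \sum_{i=1}^3\sum_{t=m+1}^{2m+1}c_t y_i^t\ \le\ \sum_{i=1}^3\sum_{t=m+1}^{2m+1}y_i^t.\]

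\emph{Step 2 (deducing \eqref{eq8}).} From $\sum_i|\ff_i|=3\sum_{t=m+1}^{2m+1}{n\choose t}-\sum_i\sum_{t=m+1}^{2m+1}y_i^t+\sum_i\sum_{t\notin[m+1,2m+1]}f_i^t$, I would split the outside sizes into $\{m-1,m,2m+2,2m+3\}$, which the first inequality of $(\star)$ balances against the range deficiencies, and the far sizes $t\le m-2$ and $t\ge 2m+4$. By \eqref{eq005} the far mass is below ${n\choose m-1}$ on each side per family, and it is absorbed by returning to the cross partition-free condition: a single far set $A$ in some $\ff_i$, together with the (near-)full middle content of the other two families, forces, for every $(m+1)$-set $B$ of a second family disjoint from $A$, the set $A\cup B$ of size $|A|+(m+1)$ to be missing from the third family, i.e.\ of order ${n\choose m+1}$ missing middle-sized sets, which far exceeds the total far mass. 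Putting these estimates together yields $\sum_i|\ff_i|\le 3\sum_{t=m+1}^{2m+1}{n\choose t}$, that is, \eqref{eq8}.

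\emph{Step 3 (the extremal triples).} If equality holds in \eqref{eq8}, every inequality above is an equality; in particular the second inequality of $(\star)$ is tight, so $y_i^t=0$ for every $t\in[m+1,2m+1]$ with $c_t\neq 1$. As $c_t=\tfrac34$ for $t\in\{m+2,2m\}$, $c_t=\tfrac12$ for $t\in\{m+3,2m-2,2m-1\}$, and $c_t=0$ for $m+4\le t\le 2m-3$, those sizes are exactly $[m+2,2m]$; hence every $\ff_i$ contains all sets of sizes in $[m+2,2m]$, whereas $(m+1)$- and $(2m+1)$-sets may still be absent (as in $\mathcal K(3m+1)^d$), which is the ``moreover'' part of the claim.

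\emph{Main obstacle.} Step 1 is a routine symmetrisation once the weights are chosen as prescribed, and Step 3 then follows immediately. The delicate point is the far-size bookkeeping in Step 2: \eqref{eq76} is completely blind to the sizes $t\le m-2$ and $t\ge 2m+4$, so ruling out even a small amount of mass there cannot come from averaging and has to be extracted by going back to the cross partition-free condition and quantifying the deficiency it forces in the middle range.
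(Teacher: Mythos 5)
Your Step 1 (the averaging that converts \eqref{eq76} into a linear inequality among the $f_i^t$, and the observation that the weight normalisation makes the expected contribution of any fixed $j$-set equal to $c_j$) is exactly what the paper does, and your chain $(\star)$ is a correct rearrangement of the resulting inequality. Step 3 is also correct in principle: if all intermediate inequalities are tight, then $c_t y_i^t = y_i^t$ for every $t$, and since $c_t<1$ precisely for $t\in[m+2,2m]$ one gets $y_i^t=0$ there.

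The gap is in Step 2, and you are right to flag it yourself. The averaged inequality \eqref{eq76} says nothing about the sizes $t\le m-2$ and $t\ge 2m+4$ (they have $c_t=0$), so $(\star)$ alone gives $\sum_i\sum_{t\in\{m-1,m,2m+2,2m+3\}}f_i^t\le\sum_i\sum_{t=m+1}^{2m+1}y_i^t$ but provides no control whatsoever over $\sum_i\sum_{t\le m-2}f_i^t+\sum_i\sum_{t\ge2m+4}f_i^t$. Your attempted repair (``a single far set $A$ together with the near-full middle content forces order ${n\choose m+1}$ missing middle sets'') is circular as written: it assumes the middle layers are nearly full, which is precisely part of what is being proved; if the middle layers happen to be sparse the forced deficiency disappears, and if they are full you must still check that the forced deficit is not the same $y$-deficit already spent on absorbing $\{m-1,m,2m+2,2m+3\}$, since ${n\choose m+1}$ is not much larger than $6{n\choose m-1}$. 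Making this precise would require a nontrivial case analysis that you have not supplied. The paper instead bounds the far-size mass by an entirely separate application of Lemma~\ref{lem1}: it sums \eqref{eq13} over the triples $(s_1,s_2,s_3)=(s,m+2,2m-s)$ for $s=0,\dots,m-2$ (Table~2), multiplied by ${n\choose s}$, to obtain the inequality \eqref{eq3}, whose nonzero coefficients on sizes $t\le m-2$ and $t\ge 2m+4$ equal $1$ while its coefficients on the middle sizes are small (at most $1/3$). Adding this to the averaged inequality \eqref{eq2} gives a single weighted inequality $\sum_i\sum_k\alpha_k y_i^k\ge 6\sum_{j=0}^m{n\choose j}$ with all $\alpha_k\le 1$ and $\alpha_k<1$ for $k\in[m+2,2m]$, from which \eqref{eq8} and the ``moreover'' clause both follow at once. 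So the far-size bookkeeping genuinely requires a second averaging input (Lemma~\ref{lem1}), not just the cross-partition-free condition used pointwise, and that is the missing piece in your proposal.
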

\begin{proof}
For an event $A$, denote by $I[A]$ its indicator random variable. Let us take a triple of pairwise disjoint $(m-1)$-sets uniformly at random. Then for each $j\in[n]$ and $i\in [3]$ we have $$\E\bigg[\sum_{F\in \mathcal F_i\cap \mathcal H_i\cap {[n]\choose j}}w(F)\bigg] = \sum_{F\in \mathcal F_i\cap {[n]\choose j}}\E\bigg[\sum_{H\in \mathcal H_i\cap {[n]\choose j}}I[F=H]w(H)\bigg]=$$$$=\sum_{F\in \mathcal F_i\cap {[n]\choose j}}\sum_{H\in \mathcal H_i\cap {[n]\choose j}}\Pr[F=H]w(H)=\sum_{F\in \mathcal F_i\cap {[n]\choose j}}\sum_{H\in \mathcal H_i\cap {[n]\choose j}}\frac {w(H)}{{n\choose j}} = $$ $$=\sum_{F\in \mathcal F_i\cap {[n]\choose j}} c_j = c_j\Big|\ff_i\cap{[n]\choose j}\Big|=c_jf_i^j.$$

Therefore, \eqref{eq76} implies that
$$\sum_{i=1}^3\sum_{j\in[n]}c_jf_i^j = \E\bigg[\sum_{i=1}^3\sum_{F\in \mathcal F_i\cap \mathcal H_i}w(F)\bigg]\le 3\sum_{k=m+1}^{m+3}c_k{n\choose k}+3\sum_{k=2m-2}^{2m+1}c_k{n\choose k}.$$
Rewriting it in terms of $y^k_i$, we get that
\begin{equation}\label{eq2}\sum_{i=1}^3\sum_{k\in [n]}c_ky^k_i\ge 3\Big[{n\choose m-1}+{n\choose m}+{n\choose 2m+2}+{n\choose 2m+3}\Big].\end{equation}

 Let us now apply (\ref{eq13}) with $s_1 = 0,\ldots, m-2$, $s_2=m+2$, $s_3 = 2m-s_1$ (cf. Table~2), multiply each inequality by ${n\choose s_1}$ and sum the inequalities up.
 Then we get that

\begin{multline*}\sum_{i=1}^3\Bigg[\sum_{k=0}^{m-2}(y^k_i+y^{n-k}_i)+ \sum_{k=m+3}^{2m-1}\frac{{n\choose 2m-k} (y_i^k+y_i^{n-k})}{{n\choose k}}+\\ +\frac{\Big(1+2{n\choose m-2}+\sum_{k=0}^{m-3}{n\choose k}\Big)(y_i^{m+2}+y_i^{2m})}{{n\choose m+2}}\Bigg] \ge 6\sum_{k=0}^{m-2}{n\choose k}.\end{multline*}

\begin{vwcol}[widths={0.6,0.4}, rule=0pt]
(The extra $1$ and the coefficient $2$ in front of ${n\choose m-2}$ come from the terms with $s_1=0, s_1=m-2$, where $y^{m+2}$ and $y^{2m}$ appear twice.)
\vskip+0.4cm
We have $2{n\choose 2m-k}<\frac 13{n\choose k}$ for any $k=m+3,\ldots, 2m-1$, and $2{n\choose m-2}+1+\sum_{k=0}^{m-3}{n\choose k}\overset{(\ref{eq005})}{\le} 3{n\choose m-2}\le \frac 15{n\choose m+2}$. Therefore, we get that

\begin{tabular}[r]{|c|c|c|}
\hline
$s_1$&$s_2$&$s_3$\\
\hline
$m-2$&$m+2$&$m+2$\\
$m-3$&$m+2$&$m+3$\\
$\vdots$&$\vdots$&$\vdots$\\
$m-j$&$m+2$&$m+j$\\
$\vdots$&$\vdots$&$\vdots$\\
$0$&$m+2$&$2m$\\
\hline
\end{tabular}
\vskip-0.2cm
\phantom{sdlkfjsas}\qquad\qquad Table 2
\end{vwcol}
\vskip-0.2cm
\begin{equation}\label{eq3}\sum_{i=1}^3\Big[\sum_{k=0}^{m-2}(y_i^k+ y_i^{n-k})+\frac 13\sum_{k=m+3}^{2m-1}y_i^k+\frac 15(y_i^{m+2}+y_i^{2m})\Big] \ge 6\sum_{k=0}^{m-2}{n\choose k}.\end{equation}
Adding \eqref{eq2} and \eqref{eq3}, we conclude that
$$\sum_{i=1}^3\sum_{k\in[n]}\alpha_iy_i^k\ge 6\sum_{j=0}^{m}{n\choose j},$$
where $\alpha_i\le 1$, and $\alpha_i<1$ for $i\in[m+2,2m]$. This implies the inequality \eqref{eq8}, along with the second part of the claim.
\end{proof}

Let us put $\hh:=\cup_{i=1}^3\hh_i$. Our strategy to prove (\ref{eq76}) is as follows. For a set $F\in \hh_i$ we define the charge $c(F)$ to be equal to $w(F)$ if $F\in\mathcal F_i$, and to be $0$ otherwise. The {\it capacity} of $F$ is equal to $w(F)-c(F)$. Clearly, $\sum_{H\in \hh}c(H) = \sum_{F\in \mathcal F_i\cap \mathcal H_i}w(F)$. If there are no $(\le m)$- and $(\ge 2m+2)$-sets ({\it outside layers sets}) in $\ff_i\cap \hh_i$, then we are done. Otherwise, having some of those in $\ff_i\cap \hh_i$ will result in certain  sets of size $m+1\le x \le 2m+1$ ({\it middle layers sets}) not appearing in $\mathcal F_{i'}$ for $i'\ne i$.
Then we transfer (a part of) the charge of the outside layer sets to the middle layer sets with non-zero capacity. We show that the total charge transferred to each middle layer set is at most its weight. As a result of this procedure all outside layers sets will have zero total charge, and the middle layers sets will have charge not greater than their weight. This will obviously conclude the proof of the claim and \eqref{eq8}. We discuss the case of equality in \eqref{eq8} afterwards.\\


\textbf{Stage 1. Transferring charge from pairs of $\mathbf{(m-1)}$- and $\mathbf{m}$-sets}.\\ Assume that for $m_1, m_2\in \{m-1,m\}$ there are two disjoint sets $M_1, M_2\in \hh$ with non-zero charge. For definiteness say $M_1\in\ff_i, M_2\in \ff_j$, where $\{i,j,k\}=[3]$ throughout this proof. Then we transfer the charge of one of the sets to the $(m_1+m_2)$-set $M_1\cup M_2$, which is in $\hh$, but cannot be in $\ff_k$ and thus has zero charge. It is easy to see that, for any $m_1,m_2$, ${n\choose m_1}< c_{m_1+m_2}{n\choose m_1+m_2}$. We are not going to transfer any more charge to $(2m-t)$-sets, $t=0,1,2$.\\

\textbf{Stage 2. Transferring charge from $\mathbf{(m-1)}$-sets}.\\
If there remains an $(m-1)$-set, say, $H_1^{m-1}$, with non-zero charge, then in each of the four pairs $(H_{i_1}^{m+2}(1,x),H_{i_2}^{2m+1}(x)),$ $\{i_1,i_2\}=\{2,3\},\ x\in \{4,5\}$,  at least one set has zero charge. Transfer $\frac 14{n\choose m-1}$ charge to each of the sets with zero charge.\\

From now on we may assume that there are no $(m-1)$-sets and at most one $m$-set with non-zero charge in $\hh$. In the remaining part of the discharging scheme we distinguish two cases.
\vskip+0.2cm
{\scshape{ Case 1: there is an $m$-set with non-zero charge.}}
\vskip+0.2cm
\noindent In this case we assume that there is one $m$-set, say $H_1^m$, with nonzero charge. \vskip+0.1cm

\textbf{Stage 3. Transferring charge from pairs ($\mathbf{m}$-set, $\mathbf{(2m+t)}$-set), $\mathbf{t=2,3}$}.\vskip+0.1cm
 Assume that $H^{2m+t}_i$, $i,t\in \{2,3\}$, has non-zero charge. Note that $H_1^m\subset H^{2m+t}_i$. If $t=2$, then $H_i^{m+2}$ is missing from $\ff_i$ and thus has zero charge (there was no stage so far that a central $(m+2)$-set could get a charge). In that case, we transfer the charge ${n\choose m}$ of $H^{2m+2}_i$ to $H_i^{m+2}$. We have ${n\choose m}=\frac{(m+1)(m+2)}{(2m+1)(2m+2)}{n\choose m+2}<\frac38{n\choose m+2} = w(H_i^{m+2})$ for $m\ge 3$. We are not going to transfer any more charge to central $(m+2)$-sets. If $t=3$, we transfer the charge ${n\choose m-1}$ of $H^{2m+3}_i$ to $H^{m+3}_i$. Similarly, ${n\choose m-1}<\frac 12{n\choose m+3} = w(H^{m+3}_i).$\\

\textbf{Stage 4. Completing the charge transfer.}\vskip+0.1cm

The only $(2m+t)$-sets, $t=2,3$, that may still have non-zero charge, are $H^{2m+t}_1$. Let us  finish the discharging procedure in this case. To discharge $H_1^m$, consider 4 pairs of sets $(H_i^{2m+1}(x)$, $H_k^{m+1}(x))$, $\{i,k\}=\{2,3\},\ x\in\{4,5\}$,  in each of which at least one set must be missing from the corresponding $\ff_{i'}, i'\in\{2,3\}$. We transfer $\frac 14{n\choose m}$ charge to (one of) the missing set  in each pair. Note that the total charge of $H_i^{2m+1}(x)$ after this stage is at most $\frac 14({n\choose m}+{n\choose m-1})<\frac 12{n\choose m+1} = w(H_i^{2m+1}(x))$. The $(2m+1)$-sets are not going to get any more charge.

Next, we transfer the charge from $H^{2m+2}_1$. Choosing $x,y$ such that $\{x,y\}=\{4,5\}$, we see that in each of the two pairs $(H^{m+1}_2(x),H^{m+1}_3(y))$ there is at least one set missing from the corresponding $\ff_{i'}$. We transfer to each of them $\frac 12 {n\choose m}$ charge.

Similarly, if $H^{2m+3}_1$ has non-zero charge, then in one of the $4$ pairs $(H_i^{m+1}(x),\ H_k^{m+2}(i,y))$, $\{x,y\}=\{4,5\}$ and $ \{i,k\}=\{2,3\}$,  one set is missing from the corresponding $\ff_{i'}$. We transfer $\frac 14{n\choose m-1}$ charge to each of the missing sets. At this point there is no set in $\ff_i\cap \hh$ of size $(\le m)$ or $(\ge 2m+2)$ that has non-zero charge for $i=1,2,3$. To conclude the proof for Case 1, we have to show that the $(m+1)$- and $(m+2)$-sets did not get overcharged. The charge of any missing $(m+1)$-set was zero until Stage 4, and is at most $\frac 14{n\choose m}+\frac 12{n\choose m}+\frac 14{n\choose m-1}<{n\choose m}<\frac 12{n\choose m+1}$, which is the weight of any $(m+1)$-set. The charge of any lateral $(m+2)$-set is at most $\frac 12{n\choose m-1}$ (it could have increased at Stage 2, and at the last part of Stage 4). We have $$\frac 12 {n\choose m-1} = \frac{m(m+1)(m+2)}{2(2m+1)(2m+2)(2m+3)}{n\choose m+2}= \frac{m(m+2)}{4(2m+1)(2m+3)}{n\choose m+2}= $$$$ \frac{(m+1)^2-1}{16((m+1)^2-\frac 14)}{n\choose m+2}< \frac 1{16}{n\choose m+2}.$$ This is less than $\frac 3{32} {n\choose m+2}$, which is the charge of any lateral $(m+2)$-set. The other sets did not get any extra charge at Stage 4, and had less charge than weight at earlier stages. The proof of \eqref{eq8} is complete in Case 1.\\

{\scshape{ Case 2: there is no $m$-set with non-zero charge.}}
\vskip+0.2cm
\noindent In this case we assume that there was no $m$-set with non-zero charge left after Stage 2. Remark that any missing $(m+1)$-set has zero charge at this stage. \vskip+0.1cm
\textbf{Stage 3. Transferring charge from $\mathbf{(2m+3)}$-sets.}

Assume that there is a $(2m+3)$-set  $H^{2m+3}_i$ with non-zero charge. Then in one of the $4$ pairs $(H_j^{m+1}(x),\ H_k^{m+2}(i,y))$, where $\{x,y\}=\{4,5\}$ and $ \{i,j,k\}=[3]$, one set is missing from the corresponding $\ff_{i'}$. We transfer $\frac 12{n\choose m-1}$ charge to each of the missing $(m+2)$-sets, and the rest of the charge distribute evenly between the missing $(m+1)$-sets. Remark that a lateral $(m+2)$-set may get charge from one $(2m+3)$-set only. So if a lateral $(m+2)$-set was missing, then it has at most $\frac 34{n\choose m-1}<  \frac 3{32}{n\choose m+2}$  charge after this stage. Therefore, its charge is strictly smaller than its weight. We are not going to transfer any more charge to lateral $(m+2)$-sets.\\

\textbf{Stage 4. Transferring charge from $\mathbf{(2m+2)}$-sets.}

Next, we transfer the charge from $H^{2m+2}_i$. Choosing $x,y,j,k$ such that $\{x,y\}=\{4,5\}$, $\{i,j,k\}=[3]$, we get that in each pair $(H^{m+1}_j(x),H^{m+1}_k(y))$ there is at least one set missing from the corresponding $\ff_{i'}$. We transfer the charge of the $(2m+2)$-sets to the missing $(m+1)$-sets. Assume that there are $k_{m+1}$ $(m+1)$-sets that are missing (i.e., belong to $\cup_{i=1}^3(\hh_i-\ff_i)$) and that $k_{2m+2}$ $(2m+2)$-sets in $\hh$ have non-zero charge. Moreover, assume that the total charge of $k_{2m+3}{n\choose 2m+3}$ from $(2m+3)$-sets was transferred to $(m+1)$-sets (note that $k_{2m+3}$ may be half-integer since a part of the charge of some $(2m+3)$-sets could have been transferred to the missing $(m+2)$-sets). Then we need to make sure that
\begin{equation}\label{disc} k_{m+1}\ge k_{2m+2}+\frac 12 k_{2m+3}\end{equation}
to complete the proof of \eqref{eq76}. Indeed, the capacity of each $(m+1)$-set is $\frac 12{n\choose m+1}$, while the charge of a $(2m+2+j)$-set is ${n\choose m-j}$. Therefore, if \eqref{disc} holds, we get that the capacity of $(m+1)$-sets is bigger than the charge transferred to them:
$$\frac {k_{m+1}}2{n\choose m+1}= k_{m+1}{n\choose m}\ge k_{2m+2}{n\choose m}+2(k_{m+1}-k_{2m+2}){n\choose m-1}\ge$$$$ k_{2m+2}{n\choose m}+k_{2m+3}{n\choose m-1}.$$
The first inequality above holds since $k_{m+1}\ge k_{2m+2}$ and $2{n\choose m-1}<{n\choose m}$. Note that it may be replaced by a strict inequality, if \eqref{disc} holds and $k_{2m+3}>0$. The second inequality holds due to \eqref{disc}.

We note the following useful fact: if we have $j$ sets, $j\in\{1,2\}$, of size $m+1$ in $\cup_{i=1}^3(\ff_i\cap \hh_i)$, which are contained in $H^{2m+3}_k$, then $\frac j2$ of the charge of $H^{2m+3}_k$ is transferred to the $(m+2)$-sets and thus is not transferred to the $(m+1)$-sets. In particular, this implies that
\begin{equation}\label{dependence} k_{m+1}\le 5\ \Rightarrow\ k_{2m+3}\le 2\ \ \ \ \ \ \text{and} \ \ \ \ \ \ k_{m+1}\le 2\ \Rightarrow\ k_{2m+3}=0.\end{equation}
Below we consider two subcases.\\

\textbf{Case A: three $\mathbf{(2m+2)}$-sets with non-zero charge.} Having all three $(2m+2)$-sets in $\ff\cap \hh$ implies that in every pair of disjoint $(m+1)$-sets one is missing from the corresponding $\ff_{i'}$, which means that the $(m+1)$-sets $H^{m+1}_i(x)$ from $\cup_{i=1}^3(\ff_i\cap \hh_i)$ have either all the same $i$, or the same $x$.

If they all have the same $x$, then the number of $(m+1)$-sets can be at most $3$.
If there are exactly $3$ sets, then, for each $i$, $H^{2m+3}_i$ contains at least two $(m+2)$-sets from $\cup_{i=1}^3(\hh_i\setminus\ff_i)$, and so all the charge of $(2m+3)$-sets is transferred to $(m+2)$-sets. Thus, in this case we have $k_{2m+3}=0$ and $3=k_{m+1}= k_{2m+2}+\frac 12 k_{2m+3}$.

If there are at most two sets with the same $x$, or the $(m+1)$-sets have the same $i$, then $k_{m+1}\ge 4$ and, using \eqref{dependence}, we get \eqref{disc} again.\\

\textbf{Case B: one or two $\mathbf{(2m+2)}$-sets with non-zero charge.}
Having at least one $(2m+2)$-set in $\ff\cap \hh$ implies that $k_{m+1}\ge 2$. If $k_{m+1}=2$, then $k_{2m+3}=0$ by \eqref{dependence}, and \eqref{disc} holds. If $3\le k_{m+1}\le 5$, then $k_{2m+3}\le 2$ and \eqref{disc} also holds. If $k_{m+1}=6$, then \eqref{disc} holds again.

We have verified that the inequality \eqref{disc} holds always. This implies that we have fulfilled all the condition imposed on the charging and discharging schemes. The proof of the inequality \eqref{eq8} for $n=3m+2$ is complete.\\

\textbf{Extremal families}\vskip+0.1cm
We are only left to analyze the families attaining equality in \eqref{eq8}. (We call such triples of families \underline{extremal}.) By Claim~\ref{clach}, $\ff_i\supset\cup_{k=m+2}^{2m}{[n]\choose k}$ for each $i\in [3]$. During the charging-discharging process none of the missing sets of size $2m+1$ got fully charged. Therefore, $\ff_i\supset {[n]\choose 2m+1}$.

Let us further analyze the scenarios in which all sets in $\hh$ got fully charged. To achieve this, we have to fall into Case 2 and get an equality in \eqref{disc} with $k_{2m+3}=0$. Moreover, we cannot have any $(2m+3)$-sets in the family either, since this causes some $(m+2)$-sets to be missing from one of the families. We also infer that none of the sets of sizes $k\ge 2m+3$ and $k\le m$ are in the families (otherwise, one of the sets of size in $[m+2,2m+1]$ is missing from one of the families).

Therefore, $$\bigcup_{k=m+2}^{2m+1}{[n]\choose k}\subset \ff_i\subset \bigcup_{k=m+1}^{2m+2}{[n]\choose k}$$ for each $i\in[3]$, and we have the following three possibilities:
\begin{itemize}
\item[i] We fall into Case A and $k_{m+1}=k_{2m+2}=3$. It means that all three $(2m+2)$-sets are present in $\cup_{i=1}^3(\ff_i\cap\hh_i)$ and that for some $x\in [n]\setminus(\cup_{i=1}^3 H_i^m)$ none of the $H_i^{m+1}(x)$ are in $\cup_{i=1}^3(\ff_i\cap\hh_i)$.
\item[ii] We fall into Case B and $k_{m+1}=k_{2m+2}=2.$ Then for some $i\in[3]$, say, $i=1$, $\mathcal F_1\cap \hh_1$ does not contain $(m+1)$- and $(2m+2)$-sets, while both $\mathcal F_{2}\cap \hh_{2}$ and  $\mathcal F_{3}\cap \hh_{3}$ contain all possible $(m+1)$- and $(2m+2)$-sets.
\item[iii] We fall into Case B and have $k_{m+1}=k_{2m+2}=0$, which means that none of the three possible $(2m+2)$-sets are present in $\cup_{i=1}^3(\ff_i\cap\hh_i)$, while all $(m+1)$-sets are.
\end{itemize}
To conclude the proof, we need to analyze these possibilities and to show that for three cross partition-free families of maximum total size the same option holds for {\it all choices of triples simultaneously}. Then Option i leads to $\ff_1=\ff_2=\ff_3=\mathcal K(3m+1)^d$, Option iii leads to $\ff_1=\ff_2=\ff_3=\mathcal K(3m+2)$, and Option ii yields $\ff_1 = \{F\subset 2^{[n]}:m+2\le |F|\le 2m+1\},\ \ff_2=\ff_3=\{F\subset 2^{[n]}:m+1\le |F|\le 2m+2\}.$\\

Assume that for a given triple $F_1,F_2,F_3$ of $m$-sets with $\{x,y\}:=[n]\setminus\cup_{i=1}^3F_i$ Option i holds, and, say, $F_i\cup\{x\}$ belong to $\ff_i\cap\hh_i$, while $F_i\cup\{y\}$ does not. We aim to show that in this situation all $(m+1)$-sets containing $y$ are missing from each $\ff_i$ (and thus $y$ plays the role of the last element in the definition of the doubling of a family).

First of all, let us show that for any $F\in{[n]\setminus\{x,y\}\choose m}$ and $i\in[3]$, the set $F\cup\{x\}$ belongs to $\ff_i$, and $F\cup\{y\}$ does not. Indeed, consider a set $F'\in{[n]\setminus(\{x,y\}\cup F_i\cup F)\choose m}$. Then $F'$ and $F$ together with a third $m$-set form a triple that is of type i. (Indeed, in other options it is impossible to have $F_i\cup\{x\} \in \ff_i, F_i\cup\{y\}\notin \ff_i$.) Therefore, $F'\cup\{x\}\in \ff_{i^+}$, and $F'\cup\{y\}\notin\ff_{i^+}$. Applying the same argument again to a triple formed by $F'$ and $F_i$, we get that $F\cup\{x\}\in \ff_{i}$, and $F\cup\{y\}\notin\ff_{i}.$ This also implies that for each $i\in [3]$ any $(2m+2)$-set that contains both $x$ and $y$ belongs to $\ff_i$.

Next, we aim to show that the $(m+1)$-sets that contain both $x$ and $y$ are missing from each $\ff_i$. Assume the contrary, that it, that there is a set $H_1\in {[n]\choose m}$, $x\in H_1, y\notin H_1$, such that $H_1\cup \{y\}\in \ff_1$. But we also know that for any $z\in [n]\setminus (H_1\cup \{y\}$ the set $H_1\cup \{z\}$ is in $\ff_i$. Consider a partition of $[n]\setminus (H_1\cup\{y,z\})$ into two $m$-sets $H_2,H_3$. Then $H_1,H_2,H_3$ form a triple, in which both $H_1\cup\{y\}$ and $H_1\cup\{z\}$ belong to $\ff_1$. But on the other hand, we know that $[n]\setminus H_2\in \ff_2$ and $[n]\setminus H_3\in \ff_3$, since both $(2m+2)$-sets contain $x$ and $y$. But then all $(m+1)$-sets $H_j\cup\{z\}, H_j\cup\{y\}$ must be missing from $\ff_j$, $j=2,3$, so this triple is not one of the types i-iii, a contradiction.\\

The last step is to prove that we cannot have both Option ii and Option iii for different triples of $m$-sets for the same triple of families.  Assume that for some extremal families we have one choice of a triple of $m$-sets, for which Option ii holds. We claim that in this case we have $\ff_1\cap {[n]\choose m+1}=\emptyset, \ff_2,\ff_3\supset {[n]\choose 2m+2}$. Indeed, assume that $F'\in \ff_1\cap {[n]\choose m+1}$, $H'\notin\ff_1\cap {[n]\choose m+1}$. Then, applying a standard ``continuity'' argument, we get that there exist two sets $F,H\in {[n]\choose m+1}$ and elements $x,y\in[n]$, such that $F\Delta H = \{x,y\}$, and $F\in\ff_1, H\notin\ff_1$. Put $F_1:=F\cap H$, and choose a partition $F_2,F_3$ of $[n]\setminus (F\cup H)$ into two $m$-element sets. Then the triple $F_1,F_2,F_3$ is neither of type ii, nor of type iii, a contradiction. This concludes the proof of Theorem~\ref{thm2}.

\section{The proof of Theorem \ref{thm1} for $n=3m$.}


Assume that $m\ge 6$ and fix $n=3m$ for this section. The proof of this part of the theorem is similar in spirit to the proof of the previous part, but the family $\hh$ is substantially different. The family $\hh$ in this proof is invariant under the action of the cyclic group of order $n$. Let us mention that the usefulness of the cycle for the extremal set theory problems was first discovered by Katona \cite{Ka1}. Consider a family $\ff\subset 2^{[n]}$ satisfying the requirements of the theorem.

Fix a cyclic permutation $\sigma$, and redefine for simplicity $i:=\sigma(i)$. Put $$I:= [m-2,m+1]\cup [2m-1,2m+3].$$ We consider a weighted family $\mathcal H$ of sets associated with $\sigma$, containing the following sets (note that all additions and substractions are made modulo $n$; see Figures~\ref{fig1} and~\ref{fig2} for illustration):
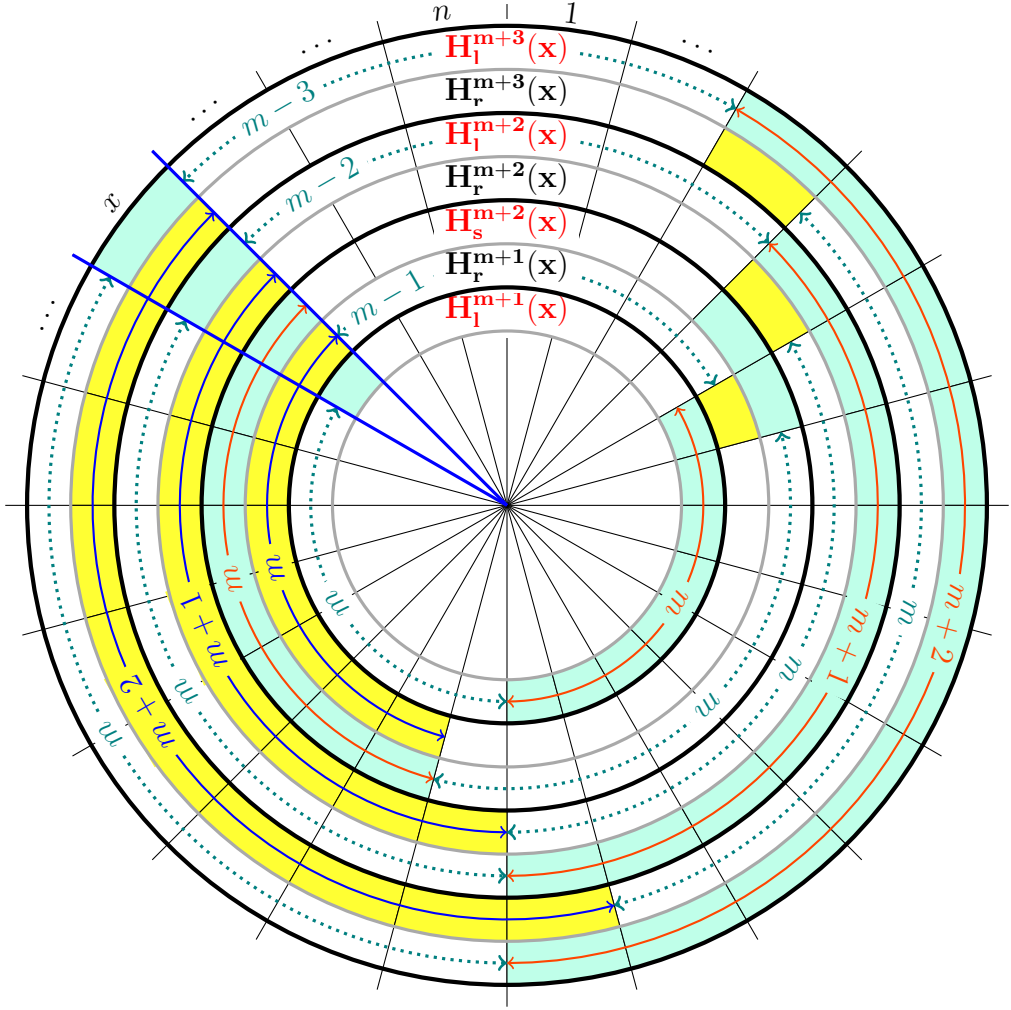
\begin{figure}\centering
\begin{tikzpicture}[scale=0.58]

\foreach \x in {1,...,24}
\draw (0,0) -- (15*\x:11.5);



\foreach \x in {1,0,2,3,9,18,19,20,21,22,23}
\filldraw[fill=Aquamarine!50, draw=black, opacity=1] (15*\x:10) -- (15*\x:11) arc (15*\x:15*(\x+1):11) -- (15*\x+15:10) arc (15*\x+15:15*\x:10);
\draw [->, OrangeRed,thick,domain=-15:60] plot ({10.5*cos(\x)}, {10.5*sin(\x)});
\draw [<-, OrangeRed,thick,domain=-90:-15] plot ({10.5*cos(\x)}, {10.5*sin(\x)})
node[anchor=center,fill=Aquamarine!50,  rotate=-105]{$m+2$};
\draw [dotted,->, Teal,very thick,domain=-150:-90] plot ({10.5*cos(\x)}, {10.5*sin(\x)});
\draw [dotted, <-, Teal,very thick,domain=-210:-150] plot ({10.5*cos(\x)}, {10.5*sin(\x)})
node[anchor=center,fill=white,  rotate=-230]{$m$};
\draw [dotted,->, Teal,very thick,domain= 120:135] plot ({10.5*cos(\x)}, {10.5*sin(\x)});
\draw [dotted, <-, Teal,very thick,domain=60:120] plot ({10.5*cos(\x)}, {10.5*sin(\x)})
node[anchor=center,fill=white,  rotate=30]{$m-3$};
\node[fill=white] at (0,10.5) {$\textcolor{red}{\mathbf{H^{m+3}_l(x)}}$};



\foreach \x in {9,10,11,12,13,14,15,16,17,18,3}
\filldraw[fill=Yellow!80, draw=black, opacity=1] (15*\x:9) -- (15*\x:10) arc (15*\x:15*(\x+1):10) -- (15*\x+15:9) arc (15*\x+15:15*\x:9);
\draw [dotted,->, Teal,very thick,domain=-15:45] plot ({9.5*cos(\x)}, {9.5*sin(\x)});
\draw [dotted,<-, Teal,very thick,domain=-75:-15] plot ({9.5*cos(\x)}, {9.5*sin(\x)})
node[anchor=center,fill=white,  rotate=-105]{$m$};
\draw [->, Blue,thick,domain=-150:-75] plot ({9.5*cos(\x)}, {9.5*sin(\x)});
\draw [<-, Blue,thick,domain=-225:-150] plot ({9.5*cos(\x)}, {9.5*sin(\x)})
node[anchor=center,fill=Yellow!80,  rotate=-240]{$m+2$};
\node[fill=white] at (0,9.5) {$\textcolor{black}{\mathbf{H^{m+3}_r(x)}}$};



\foreach \x in {1,0,2,9,18,19,20,21,22,23}
\filldraw[fill=Aquamarine!50, draw=black, opacity=1] (15*\x:8) -- (15*\x:9) arc (15*\x:15*(\x+1):9) -- (15*\x+15:8) arc (15*\x+15:15*\x:8);
\draw [->, OrangeRed,thick,domain=-22.5:45] plot ({8.5*cos(\x)}, {8.5*sin(\x)});
\draw [<-, OrangeRed,thick,domain=-90:-22.5] plot ({8.5*cos(\x)}, {8.5*sin(\x)})
node[anchor=center,fill=Aquamarine!50,  rotate=-112.5]{$m+1$};
\draw [dotted,->, Teal,very thick,domain=-150:-90] plot ({8.5*cos(\x)}, {8.5*sin(\x)});
\draw [dotted, <-, Teal,very thick,domain=-210:-150] plot ({8.5*cos(\x)}, {8.5*sin(\x)})
node[anchor=center,fill=white,  rotate=-230]{$m$};
\draw [dotted,->, Teal,very thick,domain= 120:135] plot ({8.5*cos(\x)}, {8.5*sin(\x)});
\draw [dotted, <-, Teal,very thick,domain=45:120] plot ({8.5*cos(\x)}, {8.5*sin(\x)})
node[anchor=center,fill=white,  rotate=30]{$m-2$};
\node[fill=white] at (0,8.5) {$\textcolor{red}{\mathbf{H^{m+2}_l(x)}}$};


\foreach \x in {9,10,11,12,13,14,15,16,17,2}
\filldraw[fill=Yellow!80, draw=black, opacity=1] (15*\x:7) -- (15*\x:8) arc (15*\x:15*(\x+1):8) -- (15*\x+15:7) arc (15*\x+15:15*\x:7);
\draw [dotted,->, Teal,very thick,domain=-30:30] plot ({7.5*cos(\x)}, {7.5*sin(\x)});
\draw [dotted,<-, Teal,very thick,domain=-90:-30] plot ({7.5*cos(\x)}, {7.5*sin(\x)})
node[anchor=center,fill=white,  rotate=-120]{$m$};
\draw [->, Blue,thick,domain=-157.5:-90] plot ({7.5*cos(\x)}, {7.5*sin(\x)});
\draw [<-, Blue,thick,domain=-225:-157.5] plot ({7.5*cos(\x)}, {7.5*sin(\x)})
node[anchor=center,fill=Yellow!80,  rotate=-247.5]{$m+1$};
\node[fill=white] at (0,7.5) {$\textcolor{black}{\mathbf{H^{m+2}_r(x)}}$};



\foreach \x in {1,0,9,18,19,20,21,22,23}
\filldraw[fill=Aquamarine!50, draw=black, opacity=1] (15*\x:4) -- (15*\x:5) arc (15*\x:15*(\x+1):5) -- (15*\x+15:4) arc (15*\x+15:15*\x:4);
\draw [->, OrangeRed,thick,domain=-30:30] plot ({4.5*cos(\x)}, {4.5*sin(\x)});
\draw [<-, OrangeRed,thick,domain=-90:-30] plot ({4.5*cos(\x)}, {4.5*sin(\x)})
node[anchor=center,fill=Aquamarine!50,  rotate=-120]{$m$};
\draw [dotted,->, Teal,very thick,domain=-150:-90] plot ({4.5*cos(\x)}, {4.5*sin(\x)});
\draw [dotted, <-, Teal,very thick,domain=-210:-150] plot ({4.5*cos(\x)}, {4.5*sin(\x)})
node[anchor=center,fill=white,  rotate=-230]{$m$};
\node[fill=white] at (0,4.5) {$\textcolor{red}{\mathbf{H^{m+1}_l(x)}}$};



\foreach \x in {9,10,11,12,13,14,15,16,1}
\filldraw[fill=Yellow!80, draw=black, opacity=1] (15*\x:5) -- (15*\x:6) arc (15*\x:15*(\x+1):6) -- (15*\x+15:5) arc (15*\x+15:15*\x:5);
\draw [->, Blue,thick,domain=-165:-105] plot ({5.5*cos(\x)}, {5.5*sin(\x)});
\draw [<-, Blue,thick,domain=-225:-165] plot ({5.5*cos(\x)}, {5.5*sin(\x)})
node[anchor=center,fill=Yellow!80,  rotate=-255]{$m$};
\draw [dotted,->, Teal,very thick,domain= 120:135] plot ({5.5*cos(\x)}, {5.5*sin(\x)});
\draw [dotted, <-, Teal,very thick,domain=30:120] plot ({5.5*cos(\x)}, {5.5*sin(\x)})
node[anchor=center,fill=white,  rotate=30]{$m-1$};
\node[fill=white] at (0,5.5) {$\textcolor{black}{\mathbf{H^{m+1}_r(x)}}$};


\foreach \x in {9,10,11,12,13,14,15,16,1,2}
\filldraw[fill=Aquamarine!50, draw=black, opacity=1] (15*\x:6) -- (15*\x:7) arc (15*\x:15*\x+15:7) -- (15*\x+15:6) arc (15*\x+15:15*\x:6);
\draw [dotted,->, Teal,very thick,domain=-45:15] plot ({6.5*cos(\x)}, {6.5*sin(\x)});
\draw [dotted,<-, Teal,very thick,domain=-105:-45] plot ({6.5*cos(\x)}, {6.5*sin(\x)})
node[anchor=center,fill=white,  rotate=-135]{$m$};
\draw [->, OrangeRed,thick,domain=-165:-105] plot ({6.5*cos(\x)}, {6.5*sin(\x)});
\draw [<-, OrangeRed,thick,domain=-225:-165] plot ({6.5*cos(\x)}, {6.5*sin(\x)})
node[anchor=center,fill=Aquamarine!50,  rotate=-255]{$m$};
\node[fill=white] at (0,6.5) {$\textcolor{red}{\mathbf{H^{m+2}_s(x)}}$};


\foreach \x in {5,7,9,11}
\draw[ultra thick] (0,0) circle (\x);
\foreach \x in {4,6,8,10}
\draw[very thick,DarkGrey] (0,0) circle (\x);


\node[rotate=-7.5] at ({11.4*sin(7.5)},{cos(7.5)*11.4}) { $1$};
\node[rotate=7.5] at ({11.4*sin(-7.5)},{cos(7.5)*11.4}) { $n$};
\node[rotate=22.5] at ({11.4*sin(-22.5)},{cos(22.5)*11.4}) { $\ldots$};
\node[rotate=-22.5] at ({11.4*sin(22.5)},{cos(22.5)*11.4}) { $\ldots$};
\node[rotate=52.5] at ({11.4*sin(-52.5)},{cos(52.5)*11.4}) { $x$};
\node[rotate=37.5] at ({11.4*sin(-37.5)},{cos(37.5)*11.4}) { $\ldots$};
\node[rotate=67.5] at ({11.4*sin(-67.5)},{cos(67.5)*11.4}) { $\ldots$};

\foreach \x in {9,10}
\draw[very thick, blue] (0,0) -- (15*\x:11.5);
\end{tikzpicture}
\caption{Non-interval sets from the family $\hh$ of sizes from $m+1$ to $m+3$. See the digression on how to read figures for the interpretation.}\label{fig1}
\end{figure}
\begin{itemize}[leftmargin= 2cm]
\item[{\scriptsize\it $\mathbf{I}$:}] $n$ {\it interval} sets of size $j$, $j\in I$: for each $x\in[n]$ put $H^{j}(x):= [x-j+1, x]$, with the weight  $w(H^{j}(x))$ satisfying
    $$\mathbf{w(H^{j}(x)):={n\choose j}}\ \ \ \text{ for } \ \ \ j\in I-\{m+1,2m-1\}.$$
    We also have
    $$\mathbf{w(H^{m+1}(x))} := 2{n\choose m-1} \mathbf{= \frac {m+1}{2m+1}{n\choose m+1}},\ \ \ \mathbf{w(H^{2m-1}(x)) := \frac 27{n\choose m+1}}.$$
\item[{\scriptsize $\mathbf{m+1}$:}] $2n$ sets of size $m+1$: for each $x\in[n]$ put $H_l^{m+1}(x) = [x-2m,x-m-1]\cup \{x\}$ and $H_r^{m+1}(x) = [x-m+1, x]\cup \{x-2m\}$. We have $$\mathbf{w(H_l^{m+1}(x)) := w(H_r^{m+1}(x)) := \frac m{4m+2}{n\choose m+1}} = \frac{m}{m+1}{n\choose m-1}.$$ We call $H_l^{m+1}(x), H_r^{m+1}(x),$ and $H^{m+1}(x)$ \underline {left}, \underline{right}, and \underline{central} $(m+1)$-sets, respectively.
\item[{\scriptsize $\mathbf{m+2,m+3}$:}] $2n$ sets of size $m+j$, $j=2,3$: for each $x\in [n]$ put $H_l^{m+j}(x) = [x-2m-j+1,x-m-1]\cup \{x\}$ and $H_r^{m+j}(x) = [x-m+2-j, x]\cup \{x-2m-j+1\}$. We put
    $$\mathbf{w(H_l^{m+j}(x)) := w(H_r^{m+j}(x)) := \frac 16{n\choose m+j}} \ \ \ \text{ for } \ \ \ j=2,3.$$
\item[{\scriptsize $\mathbf{m+2}$:}] $n$ sets of size $m+2$: for each $x\in [n]$ put $H_s^{m+2}(x) = [x-m+1,x]\cup \{x-2m-1,x-2m\}$. We put
    $$\mathbf{w(H_s^{m+2}(x)):= \frac 16{n\choose m+2}}.$$

\item[{\scriptsize $\mathbf{2m-2}$:}] $2n$ sets of size $2m-2$:  $H_a^{2m-2}(x):=H^m(x)\cup H^{m-2}(x-m-1)$ and $H_b^{2m-2}(x):=H^{m-1}(x)\cup H^{m-1}(x-m)$; we put
    $$\mathbf{w\big(H_a^{2m-2}(x)\big) := w\big(H_b^{2m-2}(x)\big) := \frac 14{n\choose 2m-2}.}$$
\item[{\scriptsize $\mathbf{2m-1}$:}] $2n$ sets of size $2m-1$:  $H_r^{2m-1}(x):=H^m(x)\cup H^{m-1}(x-m-1)$ and $H_l^{2m-1}(x):=H^{m-1}(x)\cup H^{m}(x-m)$; we put
    $$\mathbf{w(H_l^{2m-1}(x)) := w(H_r^{2m-1}(x)):= \frac 27{n\choose 2m-1}.}$$
\end{itemize}

\vskip+0.2cm

{\usefont{T1}{cmr}{b}{sc} Digression. How to read figures.} {\usefont{T1}{cmr}{m}{sl} The definition of $\hh$ and the proof is quite technical and is based on the relationships between different sets. Therefore, we made many figures for this proof, that would illustrate most of it. Here we give an explanation of how to interpret them. The figures typically represent several sets from $\hh$ and the relationship between them.

The elements of the ground set are represented by sectors in clockwise order, and sets are represented by colored ``cells'' between two consecutive circles. Thus, an element in a set is one colored cell. The sector of the element $x$ on Fig.~\ref{fig1} is marked by thick blue segments. The arcs with arrows indicate the size of a segment in the set. The length of intervals of length $1$ and $2$ are not marked. We always have $m=8$ on the figures.

In the charging-discharging part of the proof sets are marked in a different way:
\begin{itemize}\item the sets considered at that step and which have non-zero charge are marked with a color fill. We call these sets the \underline{current sets}.\item The sets that form a forbidden configuration with current sets are marked by dots. Such sets are not in the family.\item The pairs of sets that form a forbidden configuration together with the current set are marked by hatching. One of them must be missing from the family.\item The sets that are discharged on previous steps, or cannot be in the family because of the previous steps, are marked by stars.\end{itemize}}\vskip+0.1cm

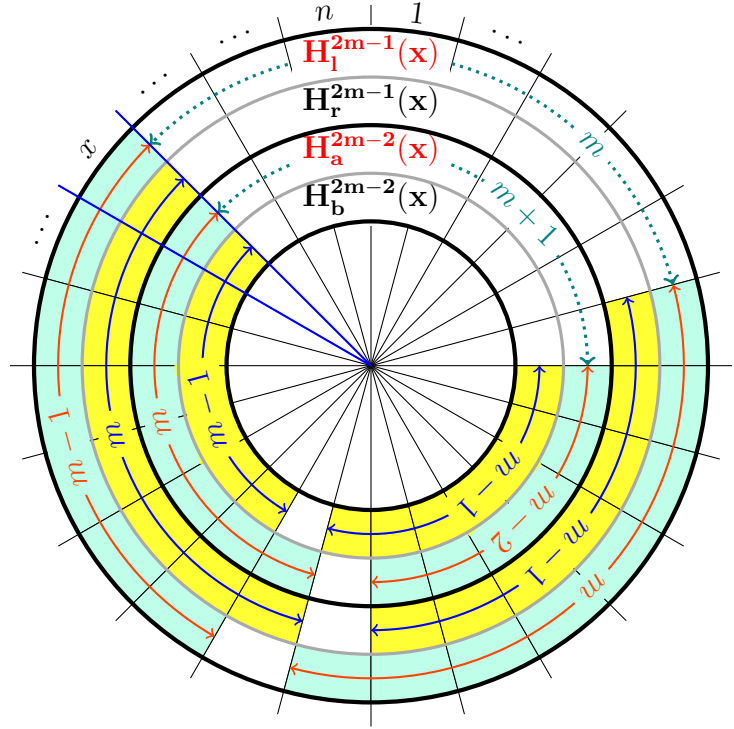
\begin{figure}\centering
\begin{tikzpicture}[scale=0.64]

\foreach \x in {1,...,24}
\draw (0,0) -- (15*\x:7.5);


\foreach \x in {0,9,10,11,12,13,14,15,17,18,19,20,21,22,23}
\filldraw[fill=Aquamarine!50, draw=black, opacity=1] (15*\x:6) -- (15*\x:7) arc (15*\x:15*(\x+1):7) -- (15*\x+15:6) arc (15*\x+15:15*\x:6);
\draw [->, OrangeRed,thick,domain=-45:15] plot ({6.5*cos(\x)}, {6.5*sin(\x)});
\draw [<-, OrangeRed,thick,domain=-105:-45] plot ({6.5*cos(\x)}, {6.5*sin(\x)})
node[anchor=center,fill=Aquamarine!50,  rotate=-135]{$m$};
\draw [->, OrangeRed,thick,domain=-165:-120] plot ({6.5*cos(\x)}, {6.5*sin(\x)});
\draw [<-, OrangeRed,thick,domain=-225:-165] plot ({6.5*cos(\x)}, {6.5*sin(\x)})
node[anchor=center,fill=Aquamarine!50,  rotate=-255]{$m-1$};

\draw [dotted,->, Teal,very thick,domain=45:135] plot ({6.5*cos(\x)}, {6.5*sin(\x)});
\draw [dotted, <-, Teal,very thick,domain=15:45] plot ({6.5*cos(\x)}, {6.5*sin(\x)})
node[anchor=center,fill=white,  rotate=-45]{$m$};
\node[fill=white] at (0,6.5) {$\textcolor{red}{\mathbf{H^{2m-1}_l(x)}}$};



\foreach \x in {9,10,11,12,13,14,15,16, 18,19,20,21,22,23,0}
\filldraw[fill=Yellow!80, draw=black, opacity=1] (15*\x:5) -- (15*\x:6) arc (15*\x:15*(\x+1):6) -- (15*\x+15:5) arc (15*\x+15:15*\x:5);
\draw [->, Blue,thick,domain=-165:-105] plot ({5.5*cos(\x)}, {5.5*sin(\x)});
\draw [<-, Blue,thick,domain=-225:-165] plot ({5.5*cos(\x)}, {5.5*sin(\x)})
node[anchor=center,fill=Yellow!80,  rotate=-255]{$m$};
\draw [->, Blue,thick,domain=-45:15] plot ({5.5*cos(\x)}, {5.5*sin(\x)});
\draw [<-, Blue,thick,domain=-90:-45] plot ({5.5*cos(\x)}, {5.5*sin(\x)})
node[anchor=center,fill=Yellow!80,  rotate=-135]{$m-1$};

\node[fill=white] at (0,5.5) {$\textcolor{black}{\mathbf{H^{2m-1}_r(x)}}$};



\foreach \x in {9,10,11,12,13,14,15,16, 18,19,20,21,22,23}
\filldraw[fill=Aquamarine!50, draw=black, opacity=1] (15*\x:4) -- (15*\x:5) arc (15*\x:15*(\x+1):5) -- (15*\x+15:4) arc (15*\x+15:15*\x:4);

\draw [->, OrangeRed,thick,domain=-165:-105] plot ({4.5*cos(\x)}, {4.5*sin(\x)});
\draw [<-, OrangeRed,thick,domain=-225:-165] plot ({4.5*cos(\x)}, {4.5*sin(\x)})
node[anchor=center,fill=Aquamarine!50,  rotate=-255]{$m$};
\draw [->, OrangeRed,thick,domain=-45:0] plot ({4.5*cos(\x)}, {4.5*sin(\x)});
\draw [<-, OrangeRed,thick,domain=-90:-45] plot ({4.5*cos(\x)}, {4.5*sin(\x)})
node[anchor=center,fill=Aquamarine!50,  rotate=-135]{$m-2$};

\draw [dotted,->, Teal,very thick,domain=45:135] plot ({4.5*cos(\x)}, {4.5*sin(\x)});
\draw [dotted, <-, Teal,very thick,domain=0:45] plot ({4.5*cos(\x)}, {4.5*sin(\x)})
node[anchor=center,fill=white,  rotate=-45]{$m+1$};

\node[fill=white] at (0,4.5) {$\textcolor{red}{\mathbf{H_a^{2m-2}(x)}}$};



\foreach \x in {9,10,11,12,13,14,15,17, 18,19,20,21,22,23}
\filldraw[fill=Yellow!80, draw=black, opacity=1] (15*\x:3) -- (15*\x:4) arc (15*\x:15*(\x+1):4) -- (15*\x+15:3) arc (15*\x+15:15*\x:3);

\draw [->, Blue,thick,domain=-165:-120] plot ({3.5*cos(\x)}, {3.5*sin(\x)});
\draw [<-, Blue,thick,domain=-225:-165] plot ({3.5*cos(\x)}, {3.5*sin(\x)})
node[anchor=center,fill=Yellow!80,  rotate=-255]{$m-1$};
\draw [->, Blue,thick,domain=-45:0] plot ({3.5*cos(\x)}, {3.5*sin(\x)});
\draw [<-, Blue,thick,domain=-105:-45] plot ({3.5*cos(\x)}, {3.5*sin(\x)})
node[anchor=center,fill=Yellow!80,  rotate=-135]{$m-1$};

\node[fill=white] at (0,3.5) {$\textcolor{black}{\mathbf{H^{2m-2}_b(x)}}$};


\foreach \x in {3,5,7}
\draw[ultra thick] (0,0) circle (\x);
\foreach \x in {4,6}
\draw[very thick,DarkGrey] (0,0) circle (\x);


\node[rotate=-7.5] at ({7.4*sin(7.5)},{cos(7.5)*7.4}) { $1$};
\node[rotate=7.5] at ({7.4*sin(-7.5)},{cos(7.5)*7.4}) { $n$};
\node[rotate=22.5] at ({7.4*sin(-22.5)},{cos(22.5)*7.4}) { $\ldots$};
\node[rotate=-22.5] at ({7.4*sin(22.5)},{cos(22.5)*7.4}) { $\ldots$};
\node[rotate=52.5] at ({7.4*sin(-52.5)},{cos(52.5)*7.4}) { $x$};
\node[rotate=37.5] at ({7.4*sin(-37.5)},{cos(37.5)*7.4}) { $\ldots$};
\node[rotate=67.5] at ({7.4*sin(-67.5)},{cos(67.5)*7.4}) { $\ldots$};

\foreach \x in {9,10}
\draw[thick, blue] (0,0) -- (15*\x:7.5);
\end{tikzpicture}
\caption{Non-interval sets from the family $\hh$ of sizes $2m-2$, $2m-1$.}\label{fig2}
\end{figure}

We note that $m+3<2m-2$ since $m\ge 6$, which guarantees that the sizes of the sets in the list do not coincide accidentally. As we have already said, the listed sets constitute the family $\hh$.  Note that the total weight of $j$-sets in $\hh$ sums up to $c_jn{n\choose j}$, where $c_{j}=1$ for $j\in I-\{2m-1\}$, $c_{m+2}=c_{2m-2}=\frac 12$, $c_{m+3}=\frac 13$, $c_{2m-1}=\frac 67$, and $c_j=0$ otherwise.

As in in the previous part of the theorem, we reduce the problem to the analysis of $\ff\cap \hh$ via the following claim.
\begin{cla}\label{clach2} To prove the inequality \eqref{eq7}, it is sufficient to show that for any choice of $\sigma$ we have
\begin{equation}\label{eq16}\sum_{F\in \mathcal F\cap \mathcal H}w(F)\le n\sum_{\substack{j\in [m+1,m+3]\cup\\ [2m-2,2m+1]}}c_j{n\choose j}.\end{equation}
Moreover, \eqref{eq16} implies that any partition-free family of maximal size contains all sets of sizes $k\in[m+2,2m-1]$.
\end{cla}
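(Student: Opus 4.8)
The plan is to mimic the proof of Claim~\ref{clach}: first pass from the weighted inequality on $\ff\cap\hh$ (for all cyclic permutations $\sigma$) to an averaged statement about the layer sizes $f^j=|\ff\cap\binom{[n]}{j}|$, and then feed this into a family of applications of the basic inequality \eqref{eq12} so that the coefficients in front of every $y^j$ land below (or at) $1$, forcing \eqref{eq7}. For the averaging step I would pick the cyclic permutation $\sigma$ uniformly at random among the $n$ cyclic shifts of a fixed cyclic order. Since $\hh$ is invariant under the cyclic group, for each fixed size $j$ every $j$-set of $[n]$ is equally likely to coincide with a given element of $\hh^{(j)}$ under a random $\sigma$, so the expected weighted intersection $\E\big[\sum_{F\in\ff\cap\hh,\,|F|=j}w(F)\big]$ equals $c_j\, f^j$ exactly as in the proof of Claim~\ref{clach} (the total weight of $j$-sets being $c_j n\binom{n}{j}$, there are $n$ equiprobable shifts, and linearity of expectation does the rest). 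Hence \eqref{eq16} gives, after rewriting in terms of $y^j=\binom{n}{j}-f^j$,
\begin{equation}\label{eqplan2}
\sum_{j\in[n]}c_j\,y^j\ \ge\ n\Big[\textstyle\sum_{j=m-2}^{m}\binom{n}{j}+\sum_{j=2m}^{2m+3}\binom{n}{j}\Big],
\end{equation}
where I am using that $c_j=1$ for $j\in\{m-2,m-1,m,m+1,2m,2m+1,2m+2,2m+3\}$ while $c_{m+2}=c_{2m-2}=\tfrac12$, $c_{m+3}=\tfrac13$, $c_{2m-1}=\tfrac67$, and $c_j=0$ outside $I\cup\{m+2,m+3,2m-2,2m-1\}$. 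So \eqref{eqplan2} already pins down the layers of size $\le m-3$ and $\ge 2m+4$ at coefficient $0$, the layers $m-2,\dots,m+1$ and $2m-1,\dots,2m+3$ with coefficient at most $1$, and leaves a deficit only on the truly middle layers $m+2,\dots,2m-2$.

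The second step closes this deficit. As in Section~3 (Table~1) and in the $n=3m+2$ proof (Table~2), I would apply the Corollary \eqref{eq13}, specialized to a single family via \eqref{eq12}, to a short list of triples $(s_1,s_2,s_3)$ with $s_1+s_2+s_3\le n=3m$, chosen so that $s_1$ ranges over small values $0,1,\dots,m-3$, one of the other two is fixed near $m+2$, and the third is $2m-s_1$; multiplying the $s_1$-th inequality by $\binom{n}{s_1}$ and summing, the tiny tails $\sum \binom{n}{s_1}$ get absorbed using \eqref{eq005}, exactly the estimate $\sum_{j<k}\binom{n}{j}<\binom{n}{k}$ for $k\le n/3$. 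This produces an inequality of the shape $\sum_j \gamma_j y^j \ge (\text{const})\cdot\sum_{j\le m}\binom{n}{j}$ in which $\gamma_j$ is a small positive number (like $\tfrac13$ or $\tfrac15$) for every $j$ in the middle range $[m+2,2m-2]$, and $\gamma_j\le$ the residual deficit $1-c_j$ elsewhere. Adding this to \eqref{eqplan2} makes the combined coefficient of every $y^j$ at most $1$, with strict inequality for $m+1\le j\le 2m-1$; since $\sum_j y^j = 2^n-|\ff|$, comparing with $2^n - |\mathcal K(3m)|$ yields \eqref{eq7}, and the strictness on the middle layers gives the "moreover" statement that any extremal family contains all $j$-sets with $m+2\le j\le 2m-1$.

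The main obstacle is bookkeeping the coefficients precisely on the two "half-weight" layers $m+2$ and $2m-2$ (and the $\tfrac67$-layer $2m-1$, $\tfrac13$-layer $m+3$): here the residual $1-c_j$ from \eqref{eqplan2} is only $\tfrac12$ (resp. $\tfrac17$, $\tfrac23$), so the contribution $\gamma_j$ from the second family of inequalities must be controlled rather sharply, and the triples in the analogue of Table~2 must be chosen so that these layers pick up extra weight from at most one or two of the summed inequalities. I expect one has to verify, using $n=3m$ so that $\binom{n}{m+2}=\tfrac{(2m-1)2m}{(m+1)(m+2)}\binom{n}{m}$ and similar identities, that $\gamma_{m+2}\le\tfrac12$ and $\gamma_{2m-2}\le\tfrac12$ with room to spare, which is where the specific constants $c_{m+2}=c_{2m-2}=\tfrac12$, $c_{m+3}=\tfrac13$, $c_{2m-1}=\tfrac67$ in the definition of $\hh$ were reverse-engineered to work; the verification is routine but delicate, and is the only place where $m\ge 6$ (equivalently $n\ge 18$, ensuring $m+3<2m-2$ so the layer sizes do not collide) is genuinely used.
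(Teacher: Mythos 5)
Your overall plan—average over a random $\sigma$ to convert \eqref{eq16} into a linear inequality in the $y^j$'s, then top it up with applications of the Kleitman inequality \eqref{eq12} over a table of triples so that every coefficient falls to (at most) $1$—is exactly the paper's approach, and in broad strokes it is correct. However, there are several concrete errors in the proposal that would break it as written.

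First, the averaging. You propose taking $\sigma$ uniformly among the $n$ cyclic shifts of a fixed cyclic order. But $\hh$ is, by construction, invariant under exactly those shifts, so the random variable $\sum_{F\in\ff\cap\hh}w(F)$ would be constant under this randomization and no smoothing occurs. Your claim that ``every $j$-set is equally likely to coincide with a given element of $\hh^{(j)}$'' fails: under a random shift, a fixed $F$ can only coincide with $H$ if $F$ lies in the $n$-element cyclic orbit of $H$. One must take $\sigma$ uniformly over all cyclic orders (equivalently, over all permutations of $[n]$). With the correct randomization, $\Pr[H\mapsto F]=1/\binom{n}{j}$ for every $F\in\binom{[n]}{j}$ and $H\in\hh^{(j)}$, and since the total weight of $\hh^{(j)}$ is $nc_j\binom{n}{j}$, one gets $\E\big[\sum_{F\in\ff\cap\hh^{(j)}}w(F)\big]=nc_j f^j$, not $c_j f^j$.

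This $n$ is supposed to cancel against the $n$ in the right-hand side of \eqref{eq16}, but your \eqref{eqplan2} keeps it, yielding a right-hand side that is a factor of $n$ too large and therefore false. The correct consequence of \eqref{eq16} is
\[
\sum_{j}c_j\,y^j\ \ge\ \binom{n}{m-2}+\binom{n}{m-1}+\binom{n}{m}+\binom{n}{2m+2}+\binom{n}{2m+3},
\]
with no factor of $n$. Note also that the index set on your right-hand side is wrong: the contribution to the lower bound comes from the layers where $c_j\ne 0$ but $j$ is outside $[m+1,m+3]\cup[2m-2,2m+1]$, which is $\{m-2,m-1,m,2m+2,2m+3\}$, not $\{m-2,\dots,m\}\cup\{2m,\dots,2m+3\}$ (the layers $2m$ and $2m+1$ already appear on the right of \eqref{eq16}).

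Second, the table of triples. You propose $(s_1,s_2,s_3)=(s_1,m+2,2m-s_1)$, but then $s_1+s_2+s_3=3m+2>n$, violating the hypothesis of \eqref{eq12}. The paper's choice is $s_3=2m-2-s_1$ (with $s_1$ running from $0$ to $m-4$), which makes the sum exactly $n=3m$ and picks up $y^{m+2}$ and $y^{2m-2}$ with a manageable small coefficient, while the complementary sizes $n-s_1$ supply the $y^{2m+2},\,y^{2m+3},\dots$ terms. Because this table stops at $s_1=m-4$ and $c_{m-3}=0$, neither ingredient touches $y^{m-3}$, so the paper needs one additional inequality (involving $y^{m-3},y^{m+2},y^{2m-1}$) to bring that coefficient up to $1$; your sketch does not account for this missing layer.

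Finally, a minor point: after the combination the coefficient at layer $m+1$ is exactly $1$ (from $c_{m+1}=1$ alone), not strictly below it, so the range of layers where strictness—and hence the ``moreover'' statement—holds is $[m+2,2m-1]$, as stated in the claim, not $[m+1,2m-1]$.

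So the strategy is right, but as written the averaging model, the resulting layer inequality, the choice of triples, and the range of strictness all contain errors that need to be repaired before the argument goes through.
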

\begin{proof} As in the proof of Claim \ref{clach}, the equation \eqref{eq16} implies

$$\sum_{j\in[n]}nc_j\Big|\ff\cap{[n]\choose j}\Big| = \E\bigg[\sum_{F\in \mathcal F\cap \mathcal H}w(F)\bigg]\le n\sum_{\substack{j\in [m+1,m+3]\cup\\ [2m-2,2m+1]}}c_j{n\choose j}.$$
Recall that $c_k=0$ for $k$ that are not represented in $\hh$. Writing the last inequality in terms of $y_k$, we get that
\begin{equation}\label{eq4}\sum_{k\in [n]}c_ky_k\ge \sum_{\substack{j\in[m-2,m]\cup\\ \{2m+2,2m+3\}}}{n\choose j}.\end{equation}

Applying (\ref{eq1}) with $s_1 = 0,\ldots, m-4$, $s_2=m+2$, $s_3 = 2m-2-s_1$, we get that
\begin{multline*}\sum_{k=0}^{m-4}(y_k+y_{n-k})+\sum_{k=m+3}^{2m-3}\frac{{n\choose 2m-2-k} (y_k+y_{n-k})}{{n\choose k}}+\\ \frac{\Big(1+2{n\choose m-4}+\sum_{k=0}^{m-5}{n\choose k}\Big)(y_{m+2}+y_{2m-2})}{{n\choose m+2}} \ge 2\sum_{k=0}^{m-4}{n\choose k}.\end{multline*}
We have ${n\choose 2m-2-k}<\frac 14{n\choose k}$ for any $k=m+3,\ldots, 2m-3$, and $2{n\choose m-4}+1+\sum_{k=0}^{m-5}{n\choose k}\overset{(\ref{eq005})}{\le} 3{n\choose m-4}\le \frac 14{n\choose m+2}$. Therefore, we get that \begin{equation}\label{eq5}\sum_{k=0}^{m-4}(y_k+y_{n-k})+\frac 12\sum_{k=m+3}^{2m-3}y_k+\frac 14(y_{m+2}+y_{2m-2}) \ge 2\sum_{k=0}^{m-4}{n\choose k}.\end{equation}
We also use the following inequality, similar to \eqref{eq1}:
\begin{equation}\label{eq6}y_{m-3}+\frac 18(y_{m+2}+y_{2m-1})\ge y_{m-3}+\frac{{n\choose m-3}y_{m+2}}{{n\choose m+2}}+\frac{{n\choose m-3}y_{2m-1}}{{n\choose m+1}}\ge {n\choose m-3}.\end{equation}
(The choice of $\frac 18$ is somewhat arbitrary. We just need $8{3m\choose m-3}\le {3m\choose m+1}\le {3m\choose m+2}$, which is true for $m\ge 3$.)

Summing together \eqref{eq4}, \eqref{eq5}, and \eqref{eq6}, we see that coefficient in front of every $y_k$ is at most $1$, and we conclude that
$$\sum_{k\in[n]}c'_ky_k\ge {n\choose m}+{n\choose m-1}+2\sum_{j=0}^{m-2}{n\choose j},$$
where $0\le c'_k\le 1$ for any $k\in [n]$, and, moreover, $c'_k<1$ for $k\in [m+2,2m-1]$. This implies \eqref{eq7} along with the fact that in any partition-free family of maximal size all sets of sizes $k\in[m+2,2m-1]$ are present.
\end{proof}

We prove (\ref{eq16}) in the same way as \eqref{eq76}, but the discharging process will be different. Our goal is again to transfer all the charge from $(\le m)$- and $(\ge 2m+2)$-sets (\underline{outside layers sets}) in $\ff\cap \hh$ to the sets of size $m+1\le x \le 2m+1$ (\underline{middle layers sets}) in $\hh\setminus \ff$.\\

\textsc{Stage A. Transferring charge from $(\le(m-1))$-sets}.\\

\noindent\textbf{1. } Assume that for some $x\in [n]$ both $H^{m}(x)$ and $H^{m-2}(x-m-1)$ have non-zero charge. Then we transfer the charge of $H^{m-2}(x-m-1)$ to $H_a^{2m-2}(x)$, which is missing from $\ff$. We have $c(H^{m-2}(x-m-1)) ={n\choose m-2}<\frac 12{n\choose m+2} = w(H_a^{2m-2}(x))$. The set $H_a^{2m-2}(x)$ is not going to get any more charge. In what follows, we assume that there are no such pairs of $(m-2)$- and $m$-sets.
\begin{multicols}{2}
\begin{tikzpicture}[scale=0.5]

\foreach \x in {1,...,24}
\draw (0,0) -- (15*\x:6.5);


\foreach \x in {9,10,11,12,13,14,15,16}
\filldraw[fill=Aquamarine!50, draw=black, opacity=1] (15*\x:5) -- (15*\x:6) arc (15*\x:15*(\x+1):6) -- (15*\x+15:5) arc (15*\x+15:15*\x:5);
\node[fill=white] at (0,5.4) {{\scriptsize $\textcolor{JungleGreen}{H^{m}(x)}$}};



\foreach \x in {18,19,20,21,22,23}
\filldraw[fill=Yellow!80, draw=black, opacity=1] (15*\x:4) -- (15*\x:5) arc (15*\x:15*(\x+1):5) -- (15*\x+15:4) arc (15*\x+15:15*\x:4);
\node[fill=white] at (0,4.4) {{\scriptsize $\textcolor[rgb]{0.6,0.4,0}{H^{m-2}(x-m-1)}$}};




\foreach \x in {9,10,11,12,13,14,15,16, 18,19,20,21,22,23}
\filldraw[pattern=dots, pattern color=Red, draw=black] (15*\x:3) -- (15*\x:4) arc (15*\x:15*(\x+1):4) -- (15*\x+15:3) arc (15*\x+15:15*\x:3);

\node[fill=white] at (0,3.4) {{\scriptsize $\textcolor{red}{H_a^{2m-2}(x)}$}};


\foreach \x in {3,4,5,6}
\draw[very thick,DarkGrey] (0,0) circle (\x);

\node at (-5.5,5.5) {$\mathbf{1.}$};
\node[rotate=52.5] at ({6.4*sin(-52.5)},{cos(52.5)*6.4}) {{\scriptsize $x$}};
\node[rotate=7.5] at ({6.4*sin(172.5)},{cos(172.5)*6.4}) {{\scriptsize $x-m-1$}};

\end{tikzpicture}
\begin{tikzpicture}[scale=0.5]

\foreach \x in {1,...,24}
\draw (0,0) -- (15*\x:7.5);


\foreach \x in {18,19,20,21,22,23}
\filldraw[fill=Yellow!80, draw=black, opacity=1] (15*\x:5) -- (15*\x:6) arc (15*\x:15*(\x+1):6) -- (15*\x+15:5) arc (15*\x+15:15*\x:5);
\node[fill=white] at (0,5.4) {{\scriptsize $\textcolor[rgb]{0.6,0.4,0}{H^{m-2}(x-m-1)}$}};




\foreach \x in {0,9,10,11,12,13,14,15,16,17}
\filldraw[pattern=north west lines, pattern color=JungleGreen, draw=black, opacity=1] (15*\x:4) -- (15*\x:5) arc (15*\x:15*(\x+1):5) -- (15*\x+15:4) arc (15*\x+15:15*\x:4);
\node[fill=white] at (0,4.4) {{\scriptsize $\textcolor{JungleGreen}{H_l^{m+2}(x+m+1)}$}};



\foreach \x in {0, 9,10,11,12,13,14,15,16,17, 18,19,20,21,22,23}
\filldraw[pattern=north west lines, pattern color=blue, draw=black] (15*\x:3) -- (15*\x:4) arc (15*\x:15*(\x+1):4) -- (15*\x+15:3) arc (15*\x+15:15*\x:3);

\node[fill=white] at (0,3.4) {{\scriptsize $\textcolor{blue}{H^{2m}(x)}$}};



\foreach \x in {9,...,16}
\filldraw[pattern=fivepointed stars, pattern color=red, draw=black, opacity=1] (15*\x:6) -- (15*\x:7) arc (15*\x:15*(\x+1):7) -- (15*\x+15:6) arc (15*\x+15:15*\x:6);
\node[fill=white] at (0,6.4) {{\scriptsize $\textcolor{red}{H^{m}(x)}$}};



\foreach \x in {3,4,5,6,7}
\draw[very thick,DarkGrey] (0,0) circle (\x);


\node at (-6.5,6.5) {$\mathbf{2.}$};

\node[rotate=52.5] at ({7.4*sin(-52.5)},{cos(52.5)*7.4}) {{\scriptsize $x$}};
\node[rotate=-82.5] at ({7.4*sin(82.5)},{cos(82.5)*7.4}) {{\scriptsize $x+m+1$}};
\node[rotate=7.5] at ({7.4*sin(172.5)},{cos(172.5)*7.4}) {{\scriptsize $x-m-1$}};

\end{tikzpicture}
\end{multicols}

\noindent\textbf{2. } Next, assume that for some $x$ the set $H^{m-2}(x-m-1)$ still has non-zero charge. (Note that $H^{m+2}_l(x+m+1)\cup H^{m-2}(x-m-1)=H^{2m}(x)$. Also note that $H^{m}(x)$ is not in the family due to step 1, it is marked accordingly on the figure.) Thus at least one of the two sets $H^{m+2}_l(x+m+1),H^{2m}(x)$ has zero charge, and we transfer the ${n\choose m-2}$ charge to this set.  We have ${n\choose m-2}\le w(H^{2m}(x))$ and ${n\choose m-2}=\frac{(m-1)(m+2)}{4(2m+1)(2m-1)}{n\choose m+2}=\frac 1{12}{n\choose m+2}< \frac 12 w\big(H^{m+2}_l(x+m+1)\big)$. From now on we assume that there are no $(m-2)$-sets with non-zero charge.\\

\noindent\textbf{3. } Assume that for some $x\in[n]$ both $H^{m-1}(x)$ and $H^{m-1}(x-m)$ have non-zero charge. Then the set $H_b^{2m-2}(x)$ is missing from $\ff$, and we transfer all the charge from $H^{m-1}(x-m)$ to it. We have ${n\choose m-1}= \frac {(m+2)(m+1)m}{(2m-1)2m(2m+1)}{n\choose m+2}<\frac 14{n\choose m+2} = w(H^{2m-2}_b(x))$. From now on we assume that there are no such pairs of charged $(m-1)$-sets.

\begin{vwcol}[widths={0.33,0.33,0.33}, rule=0pt]
\begin{tikzpicture}[scale=0.4]

\foreach \x in {1,...,24}
\draw (0,0) -- (15*\x:6.5);


\foreach \x in {9,10,11,12,13,14,15}
\filldraw[fill=Aquamarine!50, draw=black, opacity=1] (15*\x:5) -- (15*\x:6) arc (15*\x:15*(\x+1):6) -- (15*\x+15:5) arc (15*\x+15:15*\x:5);
\node[fill=white] at (0,5.4) {{\scriptsize $\textcolor{JungleGreen}{H^{m-1}(x)}$}};



\foreach \x in {17,18,19,20,21,22,23}
\filldraw[fill=Yellow!80, draw=black, opacity=1] (15*\x:4) -- (15*\x:5) arc (15*\x:15*(\x+1):5) -- (15*\x+15:4) arc (15*\x+15:15*\x:4);
\node[fill=white] at (0,4.4) {{\scriptsize $\textcolor[rgb]{0.6,0.4,0}{H^{m-1}(x-m)}$}};




\foreach \x in {9,10,11,12,13,14,15,17, 18,19,20,21,22,23}
\filldraw[pattern=dots, pattern color=Red, draw=black] (15*\x:3) -- (15*\x:4) arc (15*\x:15*(\x+1):4) -- (15*\x+15:3) arc (15*\x+15:15*\x:3);

\node[fill=white] at (0,3.4) {{\scriptsize $\textcolor{red}{H_b^{2m-2}(x)}$}};


\foreach \x in {3,4,5,6}
\draw[very thick,DarkGrey] (0,0) circle (\x);

\node at (-5.5,5.5) {$\mathbf{3.}$};
\node[rotate=52.5] at ({6.4*sin(-52.5)},{cos(52.5)*6.4}) {{\scriptsize $x$}};
\node[rotate=-7.5] at ({6.4*sin(-172.5)},{cos(-172.5)*6.4}) {{\scriptsize $x-m$}};

\end{tikzpicture}
\begin{tikzpicture}[scale=0.4]

\foreach \x in {1,...,24}
\draw (0,0) -- (15*\x:6.5);


\foreach \x in {9,...,16}
\filldraw[fill=Aquamarine!50, draw=black, opacity=1] (15*\x:5) -- (15*\x:6) arc (15*\x:15*(\x+1):6) -- (15*\x+15:5) arc (15*\x+15:15*\x:5);
\node[fill=white] at (0,5.4) {{\scriptsize $\textcolor{JungleGreen}{H^{m}(x)}$}};



\foreach \x in {0,18,19,20,21,22,23}
\filldraw[fill=Yellow!80, draw=black, opacity=1] (15*\x:4) -- (15*\x:5) arc (15*\x:15*(\x+1):5) -- (15*\x+15:4) arc (15*\x+15:15*\x:4);
\node[fill=white] at (0,4.4) {{\scriptsize $\textcolor[rgb]{0.6,0.4,0}{H^{m-1}(x-m-1)}$}};




\foreach \x in {9,10,11,12,13,14,15,16, 18,19,20,21,22,23,0}
\filldraw[pattern=dots, pattern color=Red, draw=black] (15*\x:3) -- (15*\x:4) arc (15*\x:15*(\x+1):4) -- (15*\x+15:3) arc (15*\x+15:15*\x:3);

\node[fill=white] at (0,3.4) {{\scriptsize $\textcolor{red}{H_r^{2m-1}(x)}$}};


\foreach \x in {3,4,5,6}
\draw[very thick,DarkGrey] (0,0) circle (\x);

\node at (-5.5,5.5) {$\mathbf{4a.}$};
\node[rotate=52.5] at ({6.4*sin(-52.5)},{cos(52.5)*6.4}) {{\scriptsize $x$}};
\node[rotate=7.5] at ({6.4*sin(172.5)},{cos(172.5)*6.4}) {{\scriptsize $x-m-1$}};

\end{tikzpicture}
\begin{tikzpicture}[scale=0.4]

\foreach \x in {1,...,24}
\draw (0,0) -- (15*\x:6.5);


\foreach \x in {9,10,11,12,13,14,15}
\filldraw[fill=Aquamarine!50, draw=black, opacity=1] (15*\x:5) -- (15*\x:6) arc (15*\x:15*(\x+1):6) -- (15*\x+15:5) arc (15*\x+15:15*\x:5);
\node[fill=white] at (0,5.4) {{\scriptsize $\textcolor{JungleGreen}{H^{m-1}(x)}$}};



\foreach \x in {17,18,19,20,21,22,23,0}
\filldraw[fill=Yellow!80, draw=black, opacity=1] (15*\x:4) -- (15*\x:5) arc (15*\x:15*(\x+1):5) -- (15*\x+15:4) arc (15*\x+15:15*\x:4);
\node[fill=white] at (0,4.4) {{\scriptsize $\textcolor[rgb]{0.6,0.4,0}{H^{m}(x-m)}$}};




\foreach \x in {9,10,11,12,13,14,15,17, 18,19,20,21,22,23,0}
\filldraw[pattern=dots, pattern color=Red, draw=black] (15*\x:3) -- (15*\x:4) arc (15*\x:15*(\x+1):4) -- (15*\x+15:3) arc (15*\x+15:15*\x:3);

\node[fill=white] at (0,3.4) {{\scriptsize $\textcolor{red}{H_l^{2m-1}(x)}$}};


\foreach \x in {3,4,5,6}
\draw[very thick,DarkGrey] (0,0) circle (\x);

\node at (-5.5,5.5) {$\mathbf{4a.}$};
\node[rotate=52.5] at ({6.4*sin(-52.5)},{cos(52.5)*6.4}) {{\scriptsize $x$}};
\node[rotate=-7.5] at ({6.4*sin(-172.5)},{cos(-172.5)*6.4}) {{\scriptsize $x-m$}};

\end{tikzpicture}
\end{vwcol}

\noindent\textbf{4a. } Assume that for some $x\in[n]$ both $H^{m}(x)$ and $H^{m-1}(x-m-1)$ have non-zero charge. Then the set $H_r^{2m-1}(x)$ is missing from $\ff$, and we transfer all the charge from $H^{m-1}(x-m-1)$ to it. We have ${n\choose m-1}= \frac {m+1}{4m+2}{n\choose m+1}<\frac 27{n\choose m+1} = w(H^{2m-1}_r(x))$. Analogously, if both $H^{m-1}(x)$ and $H^{m}(x-m)$ have non-zero charge, then we transfer the charge of $H^{m-1}(x)$ to the missing $H_l^{2m-1}(x)$. The calculations are the same. From now on we assume that there are no such pairs of charged $(m-1)$-sets and $m$-sets.

\begin{vwcol}[widths={0.5,0.5}, justify=center, rule=0pt]
\begin{tikzpicture}[scale=0.5]

\foreach \x in {1,...,24}
\draw (0,0) -- (15*\x:7.5);


\foreach \x in {9,...,16}
\filldraw[fill=Aquamarine!50, draw=black, opacity=1] (15*\x:5) -- (15*\x:6) arc (15*\x:15*(\x+1):6) -- (15*\x+15:5) arc (15*\x+15:15*\x:5);
\node[fill=white] at (0,5.4) {{\scriptsize $\textcolor{JungleGreen}{H^{m}(x)}$}};



\foreach \x in {17,18,19,20,21,22,23}
\filldraw[fill=Yellow!80, draw=black, opacity=1] (15*\x:4) -- (15*\x:5) arc (15*\x:15*(\x+1):5) -- (15*\x+15:4) arc (15*\x+15:15*\x:4);
\node[fill=white] at (0,4.4) {{\scriptsize $\textcolor[rgb]{0.6,0.4,0}{H^{m-1}(x-m)}$}};




\foreach \x in {9,10,11,12,13,14,15,16,17, 18,19,20,21,22,23}
\filldraw[pattern=dots, pattern color=Red, draw=black] (15*\x:3) -- (15*\x:4) arc (15*\x:15*(\x+1):4) -- (15*\x+15:3) arc (15*\x+15:15*\x:3);

\node[fill=white] at (0,3.4) {{\scriptsize $\textcolor{red}{H^{2m-1}(x)}$}};



\foreach \x in {9,...,15}
\filldraw[pattern=fivepointed stars, pattern color=red, draw=black, opacity=1] (15*\x:6) -- (15*\x:7) arc (15*\x:15*(\x+1):7) -- (15*\x+15:6) arc (15*\x+15:15*\x:6);
\node[fill=white] at (0,6.4) {{\scriptsize $\textcolor{red}{H^{m-1}(x)}$}};



\foreach \x in {3,4,5,6,7}
\draw[very thick,DarkGrey] (0,0) circle (\x);

\node at (-6.5,6.5) {$\mathbf{4b.}$};
\node[rotate=52.5] at ({7.4*sin(-52.5)},{cos(52.5)*7.4}) {{\scriptsize $x$}};

\end{tikzpicture}
\begin{tikzpicture}[scale=0.5]

\foreach \x in {1,...,24}
\draw (0,0) -- (15*\x:7.5);


\foreach \x in {9,10,11,12,13,14,15}
\filldraw[fill=Aquamarine!50, draw=black, opacity=1] (15*\x:5) -- (15*\x:6) arc (15*\x:15*(\x+1):6) -- (15*\x+15:5) arc (15*\x+15:15*\x:5);
\node[fill=white] at (0,5.4) {{\scriptsize $\textcolor{JungleGreen}{H^{m-1}(x)}$}};



\foreach \x in {16,17,18,19,20,21,22,23}
\filldraw[fill=Yellow!80, draw=black, opacity=1] (15*\x:4) -- (15*\x:5) arc (15*\x:15*(\x+1):5) -- (15*\x+15:4) arc (15*\x+15:15*\x:4);
\node[fill=white] at (0,4.4) {{\scriptsize $\textcolor[rgb]{0.6,0.4,0}{H^{m}(x-m+1)}$}};




\foreach \x in {9,10,11,12,13,14,15,16,17, 18,19,20,21,22,23}
\filldraw[pattern=dots, pattern color=Red, draw=black] (15*\x:3) -- (15*\x:4) arc (15*\x:15*(\x+1):4) -- (15*\x+15:3) arc (15*\x+15:15*\x:3);

\node[fill=white] at (0,3.4) {{\scriptsize $\textcolor{red}{H^{2m-1}(x)}$}};



\foreach \x in {17,...,23}
\filldraw[pattern=fivepointed stars, pattern color=red, draw=black, opacity=1] (15*\x:6) -- (15*\x:7) arc (15*\x:15*(\x+1):7) -- (15*\x+15:6) arc (15*\x+15:15*\x:6);
\node[fill=white] at (0,6.4) {{\scriptsize $\textcolor{red}{H^{m-1}(x-m)}$}};



\foreach \x in {3,4,5,6,7}
\draw[very thick,DarkGrey] (0,0) circle (\x);

\node at (-6.5,6.5) {$\mathbf{4b.}$};
\node[rotate=52.5] at ({7.4*sin(-52.5)},{cos(52.5)*7.4}) {{\scriptsize $x$}};

\end{tikzpicture}
\end{vwcol}

\noindent\textbf{4b. } Assume that for some $x$ either both $H^{m}(x)$ and $H^{m-1}(x-m)$ have non-zero charge, or both $H^{m-1}(x)$ and $H^{m}(x-m+1)$ have non-zero charge. (Note that due to step 3, both possibilities cannot happen at the same time.) Then the set $H^{2m-1}(x)$ is missing from $\ff$, and we transfer all the charge from the $(m-1)$-set ($H^{m-1}(x)$ or $H^{m-1}(x-m+1)$) to it. The calculations are the same as in the step 4a.\\

\noindent\textbf{5. } Assume that  for some $x\in[n]$ the set $H^{m-1}(x)$ still has non-zero charge. Then one of the sets in each pair $(H^{2m}(x+1), H^{m+1}_l(x+1))$, $(H^{2m}(x+m),H^{m+1}_r(x+m))$ is missing from the family. We transfer half of the charge of $H^{m-1}(x)$ to each of those two sets. The charge a $2m$-set could get at this step is ${n\choose m-1}$, which together with the charge that a missing $2m$-set could accumulate on step 2 gives at most ${n\choose m-1}+{n\choose m-2}<{n\choose m}=w(H^{2m}(x))$. No $2m$-set that got some charge on steps 2 or 5 is going to get any more charge. See on the figure that some $m$-sets are forbidden due to steps 4a and 4b.
\begin{vwcol}[widths={0.5,0.5}, justify=flush, rule=0pt]
\begin{tikzpicture}[scale=0.5]

\foreach \x in {1,...,24}
\draw (0,0) -- (15*\x:8.5);


\foreach \x in {9,10,11,12,13,14,15}
\filldraw[fill=Yellow!80, draw=black, opacity=1] (15*\x:5) -- (15*\x:6) arc (15*\x:15*(\x+1):6) -- (15*\x+15:5) arc (15*\x+15:15*\x:5);
\node[fill=white] at (0,5.4) {{\scriptsize $\textcolor[rgb]{0.6,0.4,0}{H^{m-1}(x)}$}};



\foreach \x in {8,16,17,18,19,20,21,22,23}
\filldraw[pattern=north west lines, pattern color=JungleGreen, draw=black, opacity=1] (15*\x:4) -- (15*\x:5) arc (15*\x:15*(\x+1):5) -- (15*\x+15:4) arc (15*\x+15:15*\x:4);
\node[fill=white] at (0,4.4) {{\scriptsize $\textcolor{JungleGreen}{H_l^{m+1}(x+1)}$}};




\foreach \x in {8,9,10,11,12,13,14,15,16,17,18,19,20,21,22,23}
\filldraw[pattern=north west lines, pattern color=blue, draw=black] (15*\x:3) -- (15*\x:4) arc (15*\x:15*(\x+1):4) -- (15*\x+15:3) arc (15*\x+15:15*\x:3);

\node[fill=white] at (0,3.4) {{\scriptsize $\textcolor{blue}{H^{2m}(x+1)}$}};



\foreach \x in {0,...,7}
\filldraw[pattern=fivepointed stars, pattern color=red, draw=black, opacity=1] (15*\x:6) -- (15*\x:7) arc (15*\x:15*(\x+1):7) -- (15*\x+15:6) arc (15*\x+15:15*\x:6);
\node[fill=white] at (0,-6.4) {{\scriptsize $\textcolor{red}{H^{m}(x+m+1)}$}};




\foreach \x in {16,...,23}
\filldraw[pattern=fivepointed stars, pattern color=red, draw=black, opacity=1] (15*\x:7) -- (15*\x:8) arc (15*\x:15*(\x+1):8) -- (15*\x+15:7) arc (15*\x+15:15*\x:7);
\node[fill=white] at (0,7.4) {{\scriptsize $\textcolor{red}{H^{m}(x-m)}$}};



\foreach \x in {3,4,5,6,7,8}
\draw[very thick,DarkGrey] (0,0) circle (\x);

\node at (-7.5,7.5) {$\mathbf{5.}$};

\node[rotate=52.5] at ({8.4*sin(-52.5)},{cos(52.5)*8.4}) {{\scriptsize $x$}};

\end{tikzpicture}
\begin{tikzpicture}[scale=0.5]

\foreach \x in {1,...,24}
\draw (0,0) -- (15*\x:8.5);


\foreach \x in {9,10,11,12,13,14,15}
\filldraw[fill=Yellow!80, draw=black, opacity=1] (15*\x:5) -- (15*\x:6) arc (15*\x:15*(\x+1):6) -- (15*\x+15:5) arc (15*\x+15:15*\x:5);
\node[fill=white] at (0,5.4) {{\scriptsize $\textcolor[rgb]{0.6,0.4,0}{H^{m-1}(x)}$}};



\foreach \x in {1,2,3,4,5,6,7,8,16}
\filldraw[pattern=north west lines, pattern color=JungleGreen, draw=black, opacity=1] (15*\x:4) -- (15*\x:5) arc (15*\x:15*(\x+1):5) -- (15*\x+15:4) arc (15*\x+15:15*\x:4);
\node[rotate=20][fill=white] at ({4.4*sin(20)},{-4.4*cos(20)}) {{\scriptsize $\textcolor{JungleGreen}{H_r^{m+1}(x+m)}$}};




\foreach \x in {1,...,16}
\filldraw[pattern=north west lines, pattern color=blue, draw=black] (15*\x:3) -- (15*\x:4) arc (15*\x:15*(\x+1):4) -- (15*\x+15:3) arc (15*\x+15:15*\x:3);

\node[rotate=20][fill=white] at ({3.4*sin(20)},{-3.4*cos(20)}) {{\scriptsize $\textcolor{blue}{H^{2m}(x+m)}$}};



\foreach \x in {1,...,8}
\filldraw[pattern=fivepointed stars, pattern color=red, draw=black, opacity=1] (15*\x:6) -- (15*\x:7) arc (15*\x:15*\x+15:7) -- (15*\x+15:6) arc (15*\x+15:15*\x:6);
\node[fill=white] at (0,-6.4) {{\scriptsize $\textcolor{red}{H^{m}(x+m)}$}};




\foreach \x in {0,17,18,19,20,21,22,23}
\filldraw[pattern=fivepointed stars, pattern color=red, draw=black, opacity=1] (15*\x:7) -- (15*\x:8) arc (15*\x:15*\x+15:8) -- (15*\x+15:7) arc (15*\x+15:15*\x:7);
\node[fill=white] at (0,7.4) {{\scriptsize $\textcolor{red}{H^{m}(x-m-1)}$}};



\foreach \x in {3,4,5,6,7,8}
\draw[very thick,DarkGrey] (0,0) circle (\x);

\node at (-7.5,7.5) {$\mathbf{5.}$};

\node[rotate=52.5] at ({8.4*sin(-52.5)},{cos(52.5)*8.4}) {{\scriptsize $x$}};

\end{tikzpicture}
\end{vwcol}

At this point all the $(m-2)$- and $(m-1)$-sets are discharged. We remark that $(2m-2)$- and $(2m-1)$-sets are not going to get any more charge.\\


\textsc{Stage B. Transferring charge from $(\ge 2m+2)$-sets}.\\

\noindent\textbf{6. } Assume that for some $x$ and $\mathbf{j\in\{2,3\}}$ both $H^{2m+j}(x)$ and $H^{m}(x-1)$ are in the family. Then $H^{m+j}_l(x)$ is not, and we transfer the charge of $H^{2m+j}(x)$ to $H^{m+j}_l(x)$. It gets ${n\choose m-j} \le \frac 1{12}{n\choose m+j}=\frac 12w(H^{m+j}_l(x))$ charge. Similarly, if for some $x$ and $j\in\{2,3\}$ both $H^{2m+j}(x)$ and $H^{m}(x-m-j+1)$ are in the family, then $H^{m+j}_r(x)$ is not. We transfer the charge of $H^{2m+j}(x)$ to $H^{m+j}_r(x)$. The calculations stay the same. Note that for both $(m+3)$-sets and $(m+2)$-sets the charge received until now does not surpass their capacity. From now on we assume that there are no such pairs of $m$-sets and $(2m+j)$-sets, where both sets are charged. \\

\begin{vwcol}[widths={0.5,0.5}, justify=center, rule=0pt]
\begin{tikzpicture}[scale=0.45]

\foreach \x in {1,...,24}
\draw (0,0) -- (15*\x:6.5);


\foreach \x in {9,10,11,12,13,14,15,16,17,18,19,20,21,22,23,0,1,2}
\filldraw[fill=Aquamarine!50, draw=black, opacity=1] (15*\x:5) -- (15*\x:6) arc (15*\x:15*(\x+1):6) -- (15*\x+15:5) arc (15*\x+15:15*\x:5);
\node[fill=white] at (0,5.4) {{\scriptsize $\textcolor{JungleGreen}{H^{2m+j}(x)}$}};



\foreach \x in {10,...,17}
\filldraw[fill=Yellow!80, draw=black, opacity=1] (15*\x:4) -- (15*\x:5) arc (15*\x:15*(\x+1):5) -- (15*\x+15:4) arc (15*\x+15:15*\x:4);
\node[fill=white] at (0,4.4) {{\scriptsize $\textcolor[rgb]{0.6,0.4,0}{H^{m}(x-1)}$}};




\foreach \x in {0,1,2,9,18,19,20,21,22,23}
\filldraw[pattern=dots, pattern color=Red, draw=black] (15*\x:3) -- (15*\x:4) arc (15*\x:15*(\x+1):4) -- (15*\x+15:3) arc (15*\x+15:15*\x:3);

\node[fill=white] at (0,3.4) {{\scriptsize $\textcolor{red}{H_l^{m+j}(x)}$}};


\foreach \x in {3,4,5,6}
\draw[very thick,DarkGrey] (0,0) circle (\x);

\node at (-5.5,5.5) {$\mathbf{6.}$};
\node[rotate=52.5] at ({6.4*sin(-52.5)},{cos(52.5)*6.4}) {{\scriptsize $x$}};
\node[rotate=7.5] at ({6.4*sin(172.5)},{cos(172.5)*6.4}) {{\scriptsize $x-m-1$}};

\end{tikzpicture}
\begin{tikzpicture}[scale=0.45]

\foreach \x in {1,...,24}
\draw (0,0) -- (15*\x:6.5);


\foreach \x in {9,10,11,12,13,14,15,16,17,18,19,20,21,22,23,0,1,2}
\filldraw[fill=Aquamarine!50, draw=black, opacity=1] (15*\x:5) -- (15*\x:6) arc (15*\x:15*(\x+1):6) -- (15*\x+15:5) arc (15*\x+15:15*\x:5);
\node[fill=white] at (0,5.4) {{\scriptsize $\textcolor{JungleGreen}{H^{2m+j}(x)}$}};



\foreach \x in {18,19,20,21,22,23,0,1}
\filldraw[fill=Yellow!80, draw=black, opacity=1] (15*\x:4) -- (15*\x:5) arc (15*\x:15*(\x+1):5) -- (15*\x+15:4) arc (15*\x+15:15*\x:4);
\node[fill=white] at (0,4.4) {{\scriptsize $\textcolor[rgb]{0.6,0.4,0}{H^{m}(x-m-j+1)}$}};




\foreach \x in {2,9,10,11,12,13,14,15,16,17}
\filldraw[pattern=dots, pattern color=Red, draw=black] (15*\x:3) -- (15*\x:4) arc (15*\x:15*(\x+1):4) -- (15*\x+15:3) arc (15*\x+15:15*\x:3);

\node[fill=white] at (0,3.4) {{\scriptsize $\textcolor{red}{H_r^{m+j}(x)}$}};


\foreach \x in {3,4,5,6}
\draw[very thick,DarkGrey] (0,0) circle (\x);

\node at (-5.5,5.5) {$\mathbf{6.}$};
\node[rotate=52.5] at ({6.4*sin(-52.5)},{cos(52.5)*6.4}) {{\scriptsize $x$}};
\node[rotate=7.5] at ({6.4*sin(172.5)},{cos(172.5)*6.4}) {{\scriptsize $x-m-j+1$}};

\end{tikzpicture}
\end{vwcol}

\noindent\textbf{7. } If there remains a $(2m+2)$-set $H^{2m+2}(x)$ with non-zero charge, then in the pair $H^{m+1}(x),$ $H^{m+1}(x-m-1)$ one set is missing. We transfer the charge of this $(2m+2)$-set to  the missing set.

\begin{vwcol}[widths={0.5,0.5}, justify=flush, rule=0pt]
\begin{tikzpicture}[scale=0.5]

\foreach \x in {1,...,24}
\draw (0,0) -- (15*\x:8.5);


\foreach \x in {9,10,11,12,13,14,15,16,17,18,19,20,21,22,23,0,1,2}
\filldraw[fill=Yellow!80, draw=black, opacity=1] (15*\x:5) -- (15*\x:6) arc (15*\x:15*(\x+1):6) -- (15*\x+15:5) arc (15*\x+15:15*\x:5);
\node[fill=white] at (0,5.4) {{\scriptsize $\textcolor[rgb]{0.6,0.4,0}{H^{2m+2}(x)}$}};



\foreach \x in {9,...,17}
\filldraw[pattern=north west lines, pattern color=JungleGreen, draw=black, opacity=1] (15*\x:4) -- (15*\x:5) arc (15*\x:15*(\x+1):5) -- (15*\x+15:4) arc (15*\x+15:15*\x:4);
\node[fill=white] at (0,4.4) {{\scriptsize $\textcolor{JungleGreen}{H^{m+1}(x)}$}};




\foreach \x in {18,19,20,21,22,23,0,1,2}
\filldraw[pattern=north west lines, pattern color=blue, draw=black] (15*\x:3) -- (15*\x:4) arc (15*\x:15*(\x+1):4) -- (15*\x+15:3) arc (15*\x+15:15*\x:3);

\node[fill=white] at (0,3.4) {{\scriptsize $\textcolor{blue}{H^{m+1}(x-m-1)}$}};



\foreach \x in {0,1,18,19,20,21,22,23}
\filldraw[pattern=fivepointed stars, pattern color=red, draw=black, opacity=1] (15*\x:6) -- (15*\x:7) arc (15*\x:15*(\x+1):7) -- (15*\x+15:6) arc (15*\x+15:15*\x:6);
\node[fill=white] at (0,6.4) {{\scriptsize $\textcolor{red}{H^{m}(x-m-1)}$}};




\foreach \x in {10,...,17}
\filldraw[pattern=fivepointed stars, pattern color=red, draw=black, opacity=1] (15*\x:7) -- (15*\x:8) arc (15*\x:15*(\x+1):8) -- (15*\x+15:7) arc (15*\x+15:15*\x:7);
\node[fill=white] at (0,7.4) {{\scriptsize $\textcolor{red}{H^{m}(x-1)}$}};



\foreach \x in {3,4,5,6,7,8}
\draw[very thick,DarkGrey] (0,0) circle (\x);

\node at (-7.5,7.5) {$\mathbf{7.}$};

\node[rotate=52.5] at ({8.4*sin(-52.5)},{cos(52.5)*8.4}) {{\scriptsize $x$}};

\end{tikzpicture}
\begin{tikzpicture}[scale=0.5]

\foreach \x in {1,...,24}
\draw (0,0) -- (15*\x:7.5);


\foreach \x in {9,10,11,12,13,14,15,16,17,18,19,20,21,22,23,0,1,2,3}
\filldraw[fill=Yellow!80, draw=black, opacity=1] (15*\x:5) -- (15*\x:6) arc (15*\x:15*(\x+1):6) -- (15*\x+15:5) arc (15*\x+15:15*\x:5);
\node[fill=white] at (0,5.4) {{\scriptsize $\textcolor[rgb]{0.6,0.4,0}{H^{2m+3}(x)}$}};



\foreach \x in {2,3,10,11,12,13,14,15,16,17}
\filldraw[pattern=north west lines, pattern color=JungleGreen, draw=black, opacity=1] (15*\x:4) -- (15*\x:5) arc (15*\x:15*(\x+1):5) -- (15*\x+15:4) arc (15*\x+15:15*\x:4);
\node[fill=white] at (0,4.4) {{\scriptsize $\textcolor{JungleGreen}{H_s^{m+2}(x-1)}$}};




\foreach \x in {9,18,19,20,21,22,23,0,1}
\filldraw[pattern=north west lines, pattern color=blue, draw=black] (15*\x:3) -- (15*\x:4) arc (15*\x:15*(\x+1):4) -- (15*\x+15:3) arc (15*\x+15:15*\x:3);

\node[fill=white] at (0,3.4) {{\scriptsize $\textcolor{blue}{H_l^{m+1}(x)}$}};



\foreach \x in {10,...,17}
\filldraw[pattern=fivepointed stars, pattern color=red, draw=black, opacity=1] (15*\x:6) -- (15*\x:7) arc (15*\x:15*(\x+1):7) -- (15*\x+15:6) arc (15*\x+15:15*\x:6);
\node[fill=white] at (0,6.4) {{\scriptsize $\textcolor{red}{H^{m}(x-1)}$}};



\foreach \x in {3,4,5,6,7}
\draw[very thick,DarkGrey] (0,0) circle (\x);

\node at (-6.5,6.5) {$\mathbf{8.}$};

\node[rotate=52.5] at ({7.4*sin(-52.5)},{cos(52.5)*7.4}) {{\scriptsize $x$}};

\end{tikzpicture}

\end{vwcol}
\vskip+0.5cm

\noindent\textbf{8. } If there remains a $(2m+3)$-set $H^{2m+3}(x)$ with non-zero charge, then in the pair $(H^{m+1}_l(x),$ $ H^{m+2}_s(x-1))$  one set is missing. We transfer the charge of $H^{2m+3}(x)$ to the missing set.\\

Let us check that no $(m+1)$- or $(m+2)$-set got too much charge. An $(m+2)$-set $H^{m+2}_s(x)$ could have gotten ${n\choose m-3}< \frac 1{12}{n\choose m+2}=\frac 12 w(H^{m+2}_s(x))$ charge at step 8, and did not get any charge before. The charge of any other $(m+2)$-set also does not exceed its charge. As for left and right $(m+1)$-sets, each of them could get at most ${n\choose m-3}$ at step 8, which together with the charge accumulated on step 5, gives at most $\frac 12{n\choose m-1}+{n\choose m-3}< \Big(\frac 12+\frac{m-1}{2m+2}\Big){n\choose m-1} = \frac m{m+1}{n\choose m-1},$ which is equal to the weight of each of these sets. Therefore, the left and right $(m+1)$-sets are not overcharged.

If an $(m+1)$-set $H^{m+1}(x)$ got the charge from both $(2m+2)$-sets, then none of the $m$-sets $H^{m}(x+m), H^{m}(x-m-1)$ that together with $H^{m+1}(x)$ form an interval of length $2m+1$ are in the family. In this case $H^{m+1}(x)$ does not appear in the later stages, and has charge $2{n\choose m-2}<{n\choose m-1}=\frac 12w(H^{m+1}(x))$. \\

\textsc{Stage C. Transferring charge from $m$-sets.}\\

\noindent\textbf{9. } Note that, at this point, if a set $F$ has non-zero charge then $m\le |F|\le 2m+1$ holds. Thus, we have to take care of the $m$-sets. Recall that all $m$-sets in $\mathcal H$ are intervals (arcs) on the circle.

If there are two adjacent $m$-sets $H^m(x), H^m(x-m)$ that are both in $\ff$, then we transfer the charge of one of them to the missing $H^{2m}(x)$. Note that in this case $H^{2m}(x)$ has zero charge at the beginning of Stage C. Indeed, it could have been charged only on steps 2 and 5, and in both cases one of the sets $H^m(x), H^m(x-m)$ should have been forbidden (see the corresponding figures). The charge transferred is ${n\choose m} = {n\choose 2m} = w(H^{2m}(x))$. We are not going to transfer any more charge to the $2m$-sets. From now on we assume that in each triple of disjoint interval $m$-sets at most one has non-zero charge. Note that this implies that there remain at most $m$ arcs of length $m$ that have positive charge.
\begin{vwcol}[widths={0.4,0.6}, justify=flush, rule=0pt]
\begin{tikzpicture}[scale=0.5]

\foreach \x in {1,...,24}
\draw (0,0) -- (15*\x:6.5);


\foreach \x in {9,10,11,12,13,14,15,16}
\filldraw[fill=Aquamarine!50, draw=black, opacity=1] (15*\x:5) -- (15*\x:6) arc (15*\x:15*(\x+1):6) -- (15*\x+15:5) arc (15*\x+15:15*\x:5);
\node[fill=white] at (0,5.4) {{\scriptsize $\textcolor{JungleGreen}{H^{m}(x)}$}};



\foreach \x in {0,17,18,19,20,21,22,23}
\filldraw[fill=Yellow!80, draw=black, opacity=1] (15*\x:4) -- (15*\x:5) arc (15*\x:15*(\x+1):5) -- (15*\x+15:4) arc (15*\x+15:15*\x:4);
\node[fill=white] at (0,4.4) {{\scriptsize $\textcolor[rgb]{0.6,0.4,0}{H^{m}(x-m)}$}};




\foreach \x in {0,9,10,11,12,13,14,15,16,17, 18,19,20,21,22,23}
\filldraw[pattern=dots, pattern color=Red, draw=black] (15*\x:3) -- (15*\x:4) arc (15*\x:15*(\x+1):4) -- (15*\x+15:3) arc (15*\x+15:15*\x:3);

\node[fill=white] at (0,3.4) {{\scriptsize $\textcolor{red}{H^{2m}(x)}$}};


\foreach \x in {3,4,5,6}
\draw[very thick,DarkGrey] (0,0) circle (\x);

\node at (-5.5,5.5) {$\mathbf{9.}$};
\node[rotate=52.5] at ({6.4*sin(-52.5)},{cos(52.5)*6.4}) {{\scriptsize $x$}};

\end{tikzpicture}

\begin{tikzpicture}[scale=0.5]

\foreach \x in {1,...,24}
\draw (0,0) -- (15*\x:8.5);


\foreach \x in {9,10,11,12,13,14,15,16}
\filldraw[fill=Yellow!80, draw=black, opacity=1] (15*\x:5) -- (15*\x:6) arc (15*\x:15*(\x+1):6) -- (15*\x+15:5) arc (15*\x+15:15*\x:5);
\node[fill=white] at (0,5.4) {{\scriptsize $\textcolor[rgb]{0.6,0.4,0}{H^{m}(x)}$}};



\foreach \x in {0,1,17,18,19,20,21,22,23}
\filldraw[pattern=north west lines, pattern color=JungleGreen, draw=black, opacity=1] (15*\x:4) -- (15*\x:5) arc (15*\x:15*(\x+1):5) -- (15*\x+15:4) arc (15*\x+15:15*\x:4);
\node[fill=white] at (0,4.4) {{\scriptsize $\textcolor{JungleGreen}{H^{m+1}(x-m)}$}};




\foreach \x in {9,10,11,12,13,14,15,16,17,18,19,20,21,22,23,0,1}
\filldraw[pattern=north west lines, pattern color=blue, draw=black] (15*\x:3) -- (15*\x:4) arc (15*\x:15*(\x+1):4) -- (15*\x+15:3) arc (15*\x+15:15*\x:3);

\node[fill=white] at (0,3.4) {{\scriptsize $\textcolor{blue}{H^{2m+1}(x)}$}};



\foreach \x in {0,8,17,18,19,20,21,22,23}
\filldraw[pattern=horizontal lines, pattern color=JungleGreen, draw=black, opacity=1] (15*\x:6) -- (15*\x:7) arc (15*\x:15*(\x+1):7) -- (15*\x+15:6) arc (15*\x+15:15*\x:6);
\node[fill=white] at (0,6.4) {{\scriptsize $\textcolor{JungleGreen}{H_l^{m+1}(x+1)}$}};




\foreach \x in {8,9,10,11,12,13,14,15,16,17,18,19,20,21,22,23,0}
\filldraw[pattern=horizontal lines, pattern color=blue, draw=black, opacity=1] (15*\x:7) -- (15*\x:8) arc (15*\x:15*(\x+1):8) -- (15*\x+15:7) arc (15*\x+15:15*\x:7);
\node[fill=white] at (0,7.4) {{\scriptsize $\textcolor{blue}{H^{2m+1}(x+1)}$}};



\foreach \x in {3,4,5,6,7,8}
\draw[very thick,DarkGrey] (0,0) circle (\x);

\node at (-7.5,7.5) {$\mathbf{10a.}$};

\node[rotate=52.5] at ({8.4*sin(-52.5)},{cos(52.5)*8.4}) {{\scriptsize $x$}};

\end{tikzpicture}
\end{vwcol}\vskip+1cm

\noindent$\mathbf{10.\ }$  The charge of the remaining $m$-sets we are going to distribute among the $(m+1)$-sets and $(2m+1)$-sets. Define $A:=\{x\in [n]: H^m(x)\text{ has non-zero charge}\}$ and define $a:=|A|$. We aim to show that the total remaining capacity of $(m+1)$- and $(2m+1)$-sets is at least $a{n\choose m}$, which is the total charge of all non-discharged $m$-sets. This will conclude the proof of \eqref{eq16}. Having a non-discharged $H^{m}(x)$ implies that in each of the following four pairs one set is missing (see the figures 10a and 10b, where all four pairs are represented):
\begin{vwcol}[widths={0.4,0.6}, justify=flush, rule=0pt]
\begin{minipage}{65mm}
\begin{itemize}\item[]
\item $H^{m+1}(x-m),H^{2m+1}(x)$,
\item $H^{m+1}_l(x+1), H^{2m+1}(x+1)$,
\item $H^{m+1}_r(x+m), H^{2m+1}(x+m)$.
\item $H^{m+1}(x+m+1)$, $H^{2m+1}(x+m+1).$
\end{itemize}
\end{minipage}
\begin{tikzpicture}[scale=0.5]

\foreach \x in {1,...,24}
\draw (0,0) -- (15*\x:8.5);


\foreach \x in {9,10,11,12,13,14,15,16}
\filldraw[fill=Yellow!80, draw=black, opacity=1] (15*\x:5) -- (15*\x:6) arc (15*\x:15*(\x+1):6) -- (15*\x+15:5) arc (15*\x+15:15*\x:5);
\node[fill=white] at (0,5.4) {{\scriptsize $\textcolor[rgb]{0.6,0.4,0}{H^{m}(x)}$}};



\foreach \x in {0,...,8}
\filldraw[pattern=north west lines, pattern color=JungleGreen, draw=black, opacity=1] (15*\x:4) -- (15*\x:5) arc (15*\x:15*(\x+1):5) -- (15*\x+15:4) arc (15*\x+15:15*\x:4);
\node[fill=white,rotate=30] at ({4.4*sin(30)},{-4.4*cos(30)}) {{\scriptsize $\textcolor{JungleGreen}{H^{m+1}(x+m+1)}$}};




\foreach \x in {0,...,16}
\filldraw[pattern=north west lines, pattern color=blue, draw=black] (15*\x:3) -- (15*\x:4) arc (15*\x:15*(\x+1):4) -- (15*\x+15:3) arc (15*\x+15:15*\x:3);

\node[fill=white,rotate=30] at ({3.4*sin(30)},{-3.4*cos(30)}) {{\scriptsize $\textcolor{blue}{H^{2m+1}(x+m+1)}$}};



\foreach \x in {1,2,3,4,5,6,7,8,17}
\filldraw[pattern=horizontal lines, pattern color=JungleGreen, draw=black, opacity=1] (15*\x:6) -- (15*\x:7) arc (15*\x:15*(\x+1):7) -- (15*\x+15:6) arc (15*\x+15:15*\x:6);
\node[fill=white,rotate=20] at ({6.4*sin(20)},{-6.4*cos(20)}) {{\scriptsize $\textcolor{JungleGreen}{H_r^{m+1}(x+m)}$}};




\foreach \x in {1,...,17}
\filldraw[pattern=horizontal lines, pattern color=blue, draw=black, opacity=1] (15*\x:7) -- (15*\x:8) arc (15*\x:15*(\x+1):8) -- (15*\x+15:7) arc (15*\x+15:15*\x:7);
\node[fill=white,rotate=20] at ({7.4*sin(20)},{-7.4*cos(20)}) {{\scriptsize $\textcolor{blue}{H^{2m+1}(x+m)}$}};



\foreach \x in {3,4,5,6,7,8}
\draw[very thick,DarkGrey] (0,0) circle (\x);

\node at (-7.5,7.5) {$\mathbf{10b.}$};

\node[rotate=52.5] at ({8.4*sin(-52.5)},{cos(52.5)*8.4}) {{\scriptsize $x$}};

\end{tikzpicture}
\end{vwcol}
\vskip+2cm
Let us denote by $\hh^{q}(A)$ the set of all $q$-element sets that appear in the list above for some $x\in A$. We call all such pairs of subsets as listed above the \underline{forbidden pairs}.

We note that no left or right missing $(m+1)$-sets with  non-zero charge could appear in the list above. Indeed, a left or right $(m+1)$-set $H$ could have gotten some charge at steps 5 and 8, and in both cases the only interval $m$-set disjoint with $H$ must be missing from the family (see the corresponding figures).

An interval $(m+1)$-set may appear in at most $2$ forbidden pairs. If it was charged on the previous steps, then it can appear in at most $1$ pair. Indeed, it could have gotten some charge at step 7 only, and then one of the adjacent interval $m$-sets is not in the family. Moreover, as we have mentioned at the end of Stage B, if it got charged twice, then it cannot appear in the list above.

Let us use the following notation: $A+i:=\{\alpha+i: \alpha\in A\}$. To further analyze the situation, we consider an auxiliary bipartite graph $G=(U\cup V, E)$. Here $U=A\cup(A+1)\cup(A+m)\cup (A+m+1)$ corresponds to the $(2m+1)$-sets: \begin{equation*} U:=\{x: H^{2m+1}(x)\in \hh^{2m+1}(A)\}.\end{equation*} The set $V$ consists of three parts:
\begin{align*}
V_1 :=& \{x: H^{m+1}_l(x)\in \hh^{m+1}(A)\},\ \ \ \ \  \ V_1 = A+1;\\
V_2 :=& \{x: H^{m+1}_r(x)\in \hh^{m+1}(A)\}, \ \ \ \ \  \ V_2 = A+m;\\
V_3 :=& \{x: H^{m+1}(x)\in\hh^{m+1}(A)\},\ \ \ \ \ V_3 = (A-m)\cup (A+m+1);\\
V:=& (V_1,1)\sqcup (V_2,2)\sqcup (V_3,3).
\end{align*}
The set of edges $E$ consists of all pairs of vertices from $U$ and $V$, that correspond to a forbidden pair of a $(2m+1)$-set and an $(m+1)$-set. We also assign weights to vertices, equal to the capacity of the corresponding sets (the amount of charge they can still receive without surpassing their weight).

By the definition of a forbidden pair, there is a family $\mathcal S\subset\hh^{2m+1}(A)\cup \hh^{m+1}(A)$, which contains at least one subset from each forbidden pair and is disjoint from $\ff$. We want to lower bound the capacity of any such family. If this lower bound is at least $a{n\choose m}$, then we are done: we can redistribute the charge of the $m$-sets between the sets of the family $S$. In terms of the bipartite graph $G$, this is a problem of lower bounding the size of a minimal weight vertex cover. Thus, the following lemma concludes the proof of \eqref{eq16} and thereby of the bound (\ref{eq7}) for $n=3m$.

\begin{lem}\label{lembp}The minimal weight of a vertex cover in $G$ is at least $a{n\choose m}$.
\end{lem}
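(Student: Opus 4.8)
The plan is to exploit the very special structure of $G$. First I would note that $G$ has maximum degree at most $2$: a vertex of $V_1$ or $V_2$ lies in a single forbidden pair, so has degree $1$; a vertex of $U$, say $H^{2m+1}(u)$, lies only in the forbidden pairs indexed by $u,u-1,u-m,u-m-1$, and since (by the reduction in Step~9 of Stage~C) $A$ meets each coset of $\{0,m,2m\}\le\mathbb{Z}_n$ in at most one point — in particular $a\le m$ — the elements $u,u-m$ are not both in $A$, and neither are $u-1,u-m-1$, so $H^{2m+1}(u)$ has degree at most $2$; a vertex of $V_3$ obviously has degree at most $2$. Hence $G$ is a disjoint union of paths and cycles, and its minimum weight vertex cover is the sum of those of its components.

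Next I would determine the components. A short check, again using that no two elements of $A$ differ by exactly $m$, shows that $G$ has no cycle and no path with three or more edges: tracing two steps along such a path or cycle forces two elements of $A$ at distance exactly $m$. So every component is an edge $P_2$ or a $2$-edge path $P_3$. Reading off capacities — ${n\choose m-1}$ for every $(2m+1)$-set; $\frac{m}{m+1}{n\choose m-1}$ for every left/right $(m+1)$-set appearing in $G$ (all of these are uncharged, as explained at the ends of Stages~A and~B); and at least $2{n\choose m-1}$ for every central $(m+1)$-set appearing in $G$, with equality exactly for the uncharged ones, which includes all those of degree $2$ — and using that the minimum weight vertex cover of an edge $v_0v_1$ is $\min(w_0,w_1)$ and of a path $v_0v_1v_2$ is $\min\bigl(w_1,\ w_0+w_2\bigr)$, one finds: every $P_2$ with a $V_1$- or $V_2$-endpoint costs $\frac{m}{m+1}{n\choose m-1}$; every other $P_2$, and every $P_3$ centred at a $(2m+1)$-set, costs ${n\choose m-1}$; and every $P_3$ centred at a central $(m+1)$-set costs $2{n\choose m-1}$.

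Now I would carry out the global count. Since the $4a$ edges of $G$ are pairwise distinct, it suffices to prove that the total cost is at least $\frac{2m+1}{4m}{n\choose m-1}$ per edge (as $a{n\choose m}=\frac{2m+1}{4m}\cdot 4a\cdot{n\choose m-1}$). Componentwise this holds with a strict surplus for every type — here one uses $m\ge 6$ — \emph{except} a $P_3$ centred at a $(2m+1)$-set, which is short by exactly $\frac1{2m}{n\choose m-1}$. I would then translate the component counts into structural data of $A$: a $P_3$ centred at a $(2m+1)$-set corresponds to a pair of $A$ at distance $1$ (two such components), $m-1$, or $m+1$; a $P_3$ centred at a central $(m+1)$-set corresponds, exactly once, to each pair of $A$ at distance $m-1$; and, combining these with the edge count, the number of $P_2$'s equals $4r-4P_{m-1}-2P_{m+1}$, where $r$ is the number of maximal runs of $A$ and $P_{m\pm1}$ the numbers of pairs of $A$ at distance $m\pm1$. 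The desired inequality then reduces to a linear inequality in $r$, $a$, $P_{m-1}$, $P_{m+1}$, which a run-by-run argument confirms (the key point being that each $P_2$ carries surplus close to $\frac12{n\choose m-1}$, hugely outweighing the $\frac1{2m}{n\choose m-1}$ deficit of a $U$-centred $P_3$), with equality precisely when $A$ is a single arc of length $m$ — the boundary configuration that later produces the second extremal family.

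The step I expect to be the main obstacle is this last one: the honest surplus-versus-deficit bookkeeping, i.e. proving the reduced inequality $6r+(2m-6)P_{m-1}\ge 2a+3P_{m+1}$-type estimate uniformly over all placements of the runs of $A$ around the cycle (and pinning down the equality cases). Ruling out long components and cycles — the one place where the hypothesis $a\le m$ is used decisively — and the componentwise cost computation are essential but routine.
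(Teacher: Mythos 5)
Your plan is a genuinely different route from the paper's. The paper never decomposes $G$ into components; instead it argues directly about an arbitrary vertex cover $W$, splitting on $k:=|W\cap(V_1\cup V_2)|$: for $k\ge 3$ it uses the crude ``$w_0/2$ per covered edge'' bound plus the $k$ extra $w_1$'s; for $k\le 2$ it first replaces $W\cap V_3$ by $N(W\cap V_3)$ (valid since $w(v)\ge w(N(v))$ for $v\in V_3$), then splits on $|U|\ge 2a+2$ versus $|U|=2a+1$, using the clockwise-boundary bound $|U|\ge 2a+1$ and, in the tight case, the fact that $A\cup(A+m)$ being a single arc forces $A=[1,m]$. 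The path/cycle decomposition you use is not in the paper at all. Your structural claims are correct — I checked that every vertex has degree $\le 2$ and that any degree-$2$ vertex has both neighbours of degree $1$ (so every component is a $P_2$ or a $P_3$), and your per-component cost and deficit/surplus figures are right. What your approach buys is a very explicit local picture and a clean path to the equality analysis; what the paper's approach buys is that it sidesteps the global accounting entirely.

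The gap is precisely where you say it is, and I do not think it is ``routine''. The reduction you describe lands on an inequality of the form
\begin{equation*}
\frac{2m\,r}{m+1}\ \ge\ \frac{a}{m}\ +\ \frac{(m-1)P_{m-1}}{m+1}\ +\ P_{m+1},
\end{equation*}
and proving it requires facts you have not stated, in particular the identity $p_2'=2(r-P_{m-1})$ for the number of $P_2$'s with a $V_1\cup V_2$ end and the consequent nonnegativity constraint $p_2''=2(r-P_{m-1}-P_{m+1})\ge 0$, i.e.\ $r\ge P_{m-1}+P_{m+1}$. Even with that, the case $P_{m-1}=0,\ P_{m+1}=1,\ r=1,\ a=m$ would violate the inequality and must be excluded separately by observing that a single arc of length $\le m$ cannot contain a pair at distance $m+1$, so $P_{m+1}\ge 1$ forces $r\ge 2$. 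With these three ingredients the reduced inequality does follow (and equality forces $a=m$, $r=1$, $P_{m-1}=1$, $P_{m+1}=0$, i.e.\ $A$ an arc of length $m$, matching the extremal case), so the approach can be made to work — but as written the key accounting step is a claim, not a proof, and the constraints needed to close it are nontrivial consequences of your own decomposition that you have not derived. That is a genuine gap.

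One small inaccuracy in the framing: what rules out long paths and cycles is not $a\le m$ per se but the stronger pairwise condition from Step~9 that no two elements of $A$ differ by exactly $m$ (equivalently, $A$ meets each coset of $\{0,m,2m\}$ at most once); $a\le m$ is a corollary, and is the fact you actually use only in the final accounting (to get $a/m\le 1$).
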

\begin{proof} Let us start with the analysis of the structure of the graph. First, $|E| = 4a$. Indeed, each vertex $x\in A$ gives rise to four forbidden pairs, and, therefore, four edges of $G$. Moreover, clearly, all the pairs are different.

Next, the degree of any vertex in $(V_1,1)$ or $(V_2,2)$ is $1$. Indeed, a left (as well as right) $(m+1)$-set is disjoint with exactly one interval $m$-set, which together defines uniquely the forbidden pair. The degree of each vertex in $(V_3,3)$ is either $1$ or $2$: for each interval $(m+1)$-set there are exactly two interval $m$-sets, together with which it forms an interval of length $(2m+1)$. Moreover, recalling the discussion after Stage 2, all left and right $(m+1)$-sets have zero charge, and if an interval $(m+1)$-set has non-zero charge, then the degree of a corresponding vertex in $G$ is $1$.

The degree of any vertex in $U$ is also either $1$ or $2$. Indeed, for any $(2m+1)$-set $H^{2m+1}(x)$ there are four interval $m$-sets that it contains that would possibly give rise to a forbidden pair: $H^{m}(x), H^m(x-1), H^{m}(x-m), H^m(x-m-1)$. These four sets split into two pairs of adjacent $m$-sets, thus, at this stage we can have at most one out of each pair (cf. step 9). Another important fact about $U$ is that $|U|\ge 2a+1$. Indeed, $|A\cup (A+m)|=2a$ due to the fact that there are no $x_1,x_2\in A$ with $x_1-x_2=m$. Then $(A+1)\cup (A+m+1) = (A\cup (A+m))+1$, and so $(A+1)\cup (A+m+1) \neq A\cup (A+m)$. (To see this, consider the {\it clockwise boundary} of $A\cup (A+m)$: the elements $x\in [n]$ which satisfy $x\in A\cup (A+m)$, but $x+1\notin A\cup (A+m)$. Clearly, the clockwise boundary of $A\cup (A+m)$ is non-empty, and is contained in $(A+1)\cup (A+m+1)$.) Therefore, $|U|>|A\cup (A+m)|=2a$.

We finish the description of the graph by recalling the weights of the vertices. All vertices in $U$ have weights $w_0:={n\choose m-1}$. All vertices in $(V_1,1)$ and $(V_2,2)$ have weight $w_1:=\frac m{m+1}{n\choose m-1}$. Note the inequality $w_1> \frac 56 w_0$, valid for $m\ge 6$. The vertices in $(V_3,3)$ that have degree $2$ correspond to interval $(m+1)$-sets with zero charge, and so have weight $2{n\choose m-1}$. The vertices in $(V_3,3)$ of degree $1$ have weight at least $w_2:=2{n\choose m-1}-{n\choose m-2}\ge \frac{3}2 {n\choose m-1}$.  \\

Let $W$ be a vertex cover in $G$. Let $k:=W\cap ((V_1,1)\cup (V_2,2))$. Assume first that $k\ge 3$. Then, removing these $k$ vertices from $G$, we have still $4a-k$ edges to cover. In the remaining graph we would have to spend at least $\frac {w_0}2$ of weight per edge (see the possible  weights and degrees of the vertices), and the total weight of $W$ would be $$kw_1+\frac {(4a-k)w_0}2 = 2aw_0+k(w_1-\frac {w_0}2)> (2a+1)w_0.$$
We finish the proof in this case by the following inequality, valid for any $a\le m$: \begin{equation}\label{eq15}(2a+1)w_0 = (2a+1){n\choose m-1} = \frac {(2a+1)m}{2m+1}{n\choose m}\ge a{n\choose m}.\end{equation}

Assume that $k\le 2$. Note that for any subset $R\subset (V_3,3)$ the weight of $R$ is at least as big as the weight of $N(R)$, the neighborhood of $R$. Therefore, there exist a vertex cover $W$ of minimal weight, which does not use any vertices from $(V_3,3)$ (and which contains $N((V_3,3))$).
If $|U|\ge 2a+2$, then we are done as well: In the worst case, we take in the vertex cover $k$ vertices from $(V_1,1)\cup (V_2,2)$ and $2a+2-k\ge 2a$ vertices from $U$. The total weight of the vertex cover in this situation is $2aw_0+2w_1>(2a+1)w_0\ge a{n\choose m}$. If $k=0$, then we are good again: we have $W=U$  and it has weight at least $(2a+1)w_0$.

We are left with the following case: $k\in \{1,2\}$ and $|U|=2a+1$. Since $|U|=2a+1$, the set $A\cup (A+m)$ forms an interval of length $2m$. Otherwise, it would have had at least two points of clockwise border, and the size of $U$ would have been at least $2a+2$. But then $A$ itself must form an interval of length $m$, w.l.o.g., $[1,m]$. Then $U=[1,2m+1]$, and the only two sets that correspond to vertices in $U$ of degree 1 are $H^{2m+1}(1)$ and $H^{2m+1}(2m+1)$. But both are connected to a vertex in $(V_3,3)$ (corresponding to the set $H^{m+1}(2m+1)$).

We get that the vertices in $W\cap ((V_1,1)\cup (V_2,2))$ are connected to vertices in $U$ of degree $2$. Therefore, we would have to take at least $2a+2-k$ vertices from $U$ in the vertex cover, and so the weight of the vertex cover would be at least $(2a+1)w_0+(kw_1-(k-1)w_0)>(2a+1)w_0$. This concludes the proof of the lemma.
\end{proof}
\vskip+0.2cm
\textbf{Extremal families}
\vskip+0.1cm
Claim~\ref{clach2} implies that in any extremal family $\ff$ all sets of sizes $k\in [m+2,2m-1]$ are present. This immediately implies that no sets of size $k\le m-3$ or $k\ge 2m+4$ are in the family.

Further, we can see that if $\ff$ did not contain a set of size $(m+1)$, then for some permutation it would have been a missing right set in $\hh$. But right sets until step 10 were charged only at step 5, and still had some capacity left. At the same time, having a right $(m+1)$-set at step 10 in the vertex cover in Lemma~\ref{lembp} implies that the vertex cover has charge strictly greater than $a{n\choose m}$. In both cases we conclude that $\ff$ was not of maximal possible size, contradicting the initial assumption. Therefore, all $(m+1)$-sets are contained in $\ff$, and, as a corollary, no $(2m+2)$- and $(2m+3)$-sets are contained in $\ff$, as well as no $(m-2)$-sets.

If an $(m-1)$-set is contained in $\ff$, then it appears in $\hh$ for some permutation, and it implies, together with the fact that all $(m+1)$-sets are in $\ff$, that some $2m$-set $H$ of non-zero weight is missing from $\hh$. It means that $H$ got some charge until step 10 and did not participate in step 10. But any such set was not fully charged, again contradicting the maximality assumption.

Therefore, $\cup_{k=m+1}^{2m-1}{[n]\choose k}\subset \ff\subset \cup_{k=m}^{2m+1}{[n]\choose k}$. Now we have to look more carefully on steps 9 and 10, in particular on Lemma~\ref{lembp}. There are two cases in which a vertex cover can have total weight exactly $a{n\choose m}$.

The first case is simple: $a=0$. Then it is clear that there were no $m$-sets in $\hh\cap\ff$: we discharged $m$-sets only on step 9, but we discharged only one out of each pair of adjacent sets.

In the second case $a=m$, and the set $A$ from Lemma~\ref{lembp} must be an interval $[x-m+1,x]$ for some $x$. Indeed, if $a<m$, then the inequality in \eqref{eq15} becomes a strict inequality. Moreover, $|U|=2a+1$ only if $A$ is an interval. The question we have to decide in this case is whether it was possible that some $m$-sets were discharged at step 9 for an {\it extremal} $\ff$.
Actually, if there was a (last remaining) pair of adjacent $m$-sets $H_1,H_2$ of non-zero charge at step 9, we could choose to transfer the charge of any of them to $H_1\cup H_2$. But one of the choices would lead to the set $A$ which is not an interval of length $m$, with the only exception: the set $A':=\{y\in [n]: H^m(y)\in \ff\}$ satisfies $A'=[x,x+m]$ for some $x\in[n]$ (note that $|A'|=m+1$). Putting this exception aside for a moment and assuming that we made the choice that leads to a non-interval $A$, we get a contradiction. Indeed, the family of non-discharged $m$-sets at step 10 does not fall into any of the two cases above. This also implies that all $2m$-sets are in $\ff$.

Suppose that $A'=[x,x+m]$. Since all $(m+1)$-sets are in $\ff$, we know that all sets $H^{2m+1}(y), y=x,\ldots, x+2m+1$ are missing from $\ff$. Their total capacity is $(2m+2){n\choose m-1}>m{n\choose m}$, which is the charge of all sets in $A'$ but one (which charge is transferred to $H^{2m}(x+2m-1)$). Therefore, it is impossible to get equality in this case.

The argument above shows that $\ff\supset \cup_{k=m+1}^{2m}{[n]\choose k}$ and that for any $\hh$ either $A'=\emptyset$ or $A'=[x,x+m-1]$ for some $x$. This means that the family $\mathcal G:=\hh\cap\ff\cap \big[{[n]\choose m}\cup {[n]\choose 2m+1}\big]$ can be of only two forms:
$$\mathcal G=\hh\cap{[n]\choose 2m+1}\ \ \ \text{ or }\ \ \ \mathcal G = \hh\cap\Big[\big\{H\in {[n]\choose m}:x\in H\big\}\cup \big\{H\in {[n]\choose 2m+1}:x\notin H\big\}\Big]$$
for some $x\in [n]$. Here the first form corresponds to the case $a=0$, and the second form corresponds to the case $a=m$.

To conclude the proof of Theorem~\ref{thm1} for $n=3m$, we need to show that the first form corresponds to the family $\mathcal K(3m)$, while the second form corresponds to the family $\tilde{\mathcal K}_x(3n)$ for some $x\in[n]$, and that no other family is extremal.

Suppose the family $\mathcal G$ has the second form for at least one $\hh$ for some $x\in[n]$. We claim that then $\ff\cap{[n]\choose m}= \{F\in {[n]\choose m}:x\in F\}$. Let us first prove that \begin{equation}\label{eq18}\ff\cap{[n]\choose m}\subset \{F\in {[n]\choose m}:x\in F\}.\end{equation}
Take two $m$-sets $F_1,F_2\in \ff$, such that $F_1\cap F_2=\{x\}$. (The existence of such two sets follows from the assumption 3 lines above.) Then, for each permutation that makes both of them intervals, all other $m$-intervals in these permutations that contain $x$ belong to $\ff$. In other words, $$\mathcal Q:=\Big\{Q\in {[n]\choose m}:x\in Q, Q\subset F_1\cup F_2\Big\}\subset \ff.$$

Assume that there is an $m$-set $G$ in $\ff$, such that $x\notin G$. Take a permutation $\sigma$ such that in it $G$ becomes an interval and both elements adjacent to $G$ are not from $F_1\cup F_2$. For $\sigma$ the family $\mathcal G$ must be of the second form. This and the choice of the elements adjacent to $G$ guarantees that there is a set $G'\in \mathcal G$ such that $G'\not\subset F_1\cup F_2$. But there exists $Q\in \mathcal Q$ such that $G'\cap Q=\emptyset$. Then, taking a permutation that makes both of them intervals, we arrive at a contradiction with the possible forms the family $\mathcal G$ may have for that permutation.

We conclude that \eqref{eq18} holds. To prove the inclusion in the other direction, assume that there is an $m$-set $H\notin \ff$, such that $x\in H$. Then take any permutation that makes both $H$ and $F_1$ intervals. We know that the corresponding $\mathcal G$ must be of the second form, with the center in $x$. This is a contradiction.


We conclude that either $\ff\cap {[n]\choose m}=\emptyset$, and in that case $\ff=\mathcal K(3m)$, or $\ff\cap {[n]\choose m}=\{F\in {[n]\choose m}:x\in F\}$ for some $x\in [n]$, and in this case $\ff=\tilde{\mathcal K}_x(3m)$. The proof of Theorem~\ref{thm1} for $n=3m$ is complete.

\section{The proof of Theorem~\ref{thm3}}
Assume that $m\ge 4$. It is possible to prove Theorem~\ref{thm3} using charging-discharging method. For a change, in this section we give a proof with a somewhat different and hopefully simpler analysis.

We are also going to average over the choice of a particular $\hh$. It contains three groups, based on an equipartition $[3m]=H_1^m\sqcup H_2^m\sqcup H_3^m$. For simplicity we assume that $i\in H_i^m$ for each $i\in[3]$. In what follows we define the $i$-th group $\hh_i$. For the definition of the sets choose $j,k$ such that $\{i,j,k\}:=[3]$. The group contains
\begin{itemize}
\item one $(m-1)$-set: $H_i^{m-1}:= H_i^m\setminus \{i\}$;
\item one $m$-set: $H_i^{m}$;
\item two $(m+1)$-sets: $H_i^{m+1}(j):=H_i^{m+1}\cup \{j\}$, each of weight ${n\choose m-1}+1$;
\item one $(2m-2)$-set $H_i^{2m-2}:=H_j^{m-1}\cup H^{m-1}_k$ of weight ${n\choose m-1}+1$;
\item one $2m$-set $H^{2m}_i:=H_j^m\cup H_k^m$;
\item one $(2m+1)$-set $H_i^{2m+1}:=[n]\setminus H_i^{m-1}$.
\end{itemize}
All non-specified weights are ${n\choose j}$ for sets of size $j$.

\begin{prop}\label{prop1} If $\ff_1,\ff_2,\ff_3$ are cross partition-free, then \begin{equation}\label{eq17}\sum_{i=1}^3\sum_{F\in \hh_i\setminus \ff_i}w(F)\ge 2{n\choose m}+5{n\choose m-1}.\end{equation}
\end{prop}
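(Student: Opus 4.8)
The plan is to recast \eqref{eq17} as an integer covering problem on $\hh$ and settle it by a case analysis. Abbreviate the members of $\hh_i$ as $m_i:=H_i^m$, $b_i:=H_i^{m-1}$, $p_i(j):=H_i^m\cup\{j\}$, $d_i:=H_i^{2m-2}$, $T_i:=H_i^{2m}$, $S_i:=H_i^{2m+1}$, whose weights are ${n\choose m}$, ${n\choose m-1}$, ${n\choose m-1}+1$, ${n\choose m-1}+1$, ${n\choose m}$, ${n\choose m-1}$ respectively. First I would enumerate all triples $\{F,G,H\}$ with $F\in\hh_a$, $G\in\hh_b$, $H\in\hh_c$, $\{a,b,c\}=[3]$ and $F=G\sqcup H$; running through the cardinalities (using $m+1<2m-2$, which holds since $m\ge4$, so that no two sizes in the list coincide) one finds exactly four types, with $\{i,j,k\}=[3]$ resp.\ $\{l,p,q\}=[3]$: (I)\ $T_k=m_i\sqcup m_j$; (II)\ $d_k=b_i\sqcup b_j$; (III)\ $S_l=m_p\sqcup p_q(l)$, two triples per $l$ (swap $p,q$); (IV)\ $T_k=b_i\sqcup p_j(i)$, two triples per $k$ (swap $i,j$). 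Since $a,b,c$ is a permutation of $1,2,3$, each such triple is an instance of the cross partition-free condition, so at least one of its three sets is absent from the corresponding $\ff$ and contributes its weight to the left side of \eqref{eq17}.

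Next I would assemble the inputs to the count. Numerically, ${n\choose m}=\tfrac{2m+1}m{n\choose m-1}$, hence ${n\choose m}>2{n\choose m-1}$, and since ${n\choose m-1}$ is large for $m\ge4$ also ${n\choose m}>2\bigl({n\choose m-1}+1\bigr)$. Combinatorially, from the three type-(I) triples at least two of $m_1,m_2,m_3,T_1,T_2,T_3$ are absent, and from the three type-(II) triples at least two of $b_1,b_2,b_3,d_1,d_2,d_3$ are absent (if at most one of the six were absent, one of the three triples would be entirely present). Moreover, if $T_k\in\ff_k$ then the two type-(IV) triples through $T_k$ force an absent set in each of the disjoint pairs $\{b_i,p_j(i)\}$ and $\{b_j,p_i(j)\}$ — two distinct absent sets, each of weight $\ge{n\choose m-1}$; symmetrically, if $S_l\in\ff_l$ then the two type-(III) triples through $S_l$ force two distinct absent sets among $\{m_p,m_q,p_q(l),p_p(l)\}$, each of weight $\ge{n\choose m-1}$.

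Then I would run the case analysis on which of $T_1,T_2,T_3$ and $S_1,S_2,S_3$ lie in their families, and within each on which of the $m_i$ and $b_i$ do. Whenever some $T_i$ or $S_i$ is absent it contributes ${n\choose m}$ or ${n\choose m-1}$ outright, which together with the two "at least two absent" statements and the numeric facts quickly yields \eqref{eq17}. The tight case is all $T_i$ and all $S_i$ present: here (I) forces two absent $m_i$'s, say $m_2,m_3$; if $m_1$ is also absent one closes the deficit using (II) together with (IV) applied to the present $T_1,T_2,T_3$; if $m_1\in\ff_1$, the two type-(III) triples through $m_1$ force $p_3(2),p_2(3)$ absent, and then the type-(IV) triples through $T_2,T_3$ force, along disjoint index patterns, one further absent set in each of $\{b_1,p_3(1)\}$, $\{b_3,p_1(3)\}$, $\{b_1,p_2(1)\}$, $\{b_2,p_1(2)\}$, which is exactly enough to reach $2{n\choose m}+5{n\choose m-1}$. (Example~4 realizes equality, and matching it tells one precisely which absent sets to chase in each branch.)

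The main obstacle will be the bookkeeping inside this case analysis: each $p_i(j)$ lies in only one type-(III) and one type-(IV) triple, whereas each $m_i$, $b_i$, $T_i$ occurs in several, so in every case one must check that the absent sets produced by the triples invoked are pairwise distinct — otherwise their weights cannot be added. It is worth stressing that the integrality is genuinely needed and the case split is not just a convenience: the fractional assignment giving weight $\tfrac12$ to every $m_i,b_i,p_i(j)$ and $0$ to everything else satisfies all the triple inequalities with objective only about $\tfrac{15}2{n\choose m-1}$, strictly below the target, which is about $9{n\choose m-1}$; so no purely LP/fractional argument can work, and one must use the fact that the number of present $m_i$'s (and of present $b_i$'s) is an integer.
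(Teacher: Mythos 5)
Your enumeration of the four types of disjoint-union triples inside $\hh$ is correct and complete, and the two \emph{at least two absent} lemmas (from the $T_k=m_i\sqcup m_j$ triples and the $d_k=b_i\sqcup b_j$ triples) together with the pair-forcing observations from types (III) and (IV) are exactly the combinatorial inputs the paper's Proposition~\ref{prop1} proof manipulates; the LP-relaxation remark, showing that fractional covering only reaches about $\tfrac{15}{2}\binom{n}{m-1}$, is a genuinely useful sanity check that integrality (i.e.\ casework) is unavoidable. So the plan is sound in outline.

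The gap is in the claim that the branches with an absent $T_i$ or $S_i$ are ``quick.'' Take the branch $m_i,T_i\notin\ff_i$ with $m_j,m_k,T_j,T_k,S_1,S_2,S_3$ all present (this is exactly the paper's case c2, and it is the \emph{hardest}, not an easy one). Here types (III) and (IV) force $p_j(i),p_k(i),p_i(j),p_i(k)$ absent, but $2\binom{n}{m}+4\bigl(\binom{n}{m-1}+1\bigr)$ is still short of $2\binom{n}{m}+5\binom{n}{m-1}$. The missing step is the chain the paper uses: since $p_i(j),p_i(k)$ absent already accounts for pairs 3 and 4, $b_j$ and $b_k$ are unconstrained by (IV), and one must go back to the $d_i=b_j\sqcup b_k$ triple to extract the fifth absent set ($d_i$, or one of $b_j,b_k$). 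That extra move is not a corollary of the numeric facts plus the two ``at least two absent'' statements. A second, minor issue: the case you single out as tight (all $T_i,S_i$ present, $m_1\in\ff_1$, $m_2,m_3$ absent) actually gives a strict inequality — the forced loss is at least $2\binom{n}{m}+2\bigl(\binom{n}{m-1}+1\bigr)+3\binom{n}{m-1}$, two units above the target — whereas Example~4, the equality configuration, has $T_1,T_2,T_3,S_1$ present and $S_2,S_3$ absent, i.e.\ lives in one of the branches you dismissed as routine. Executing those branches carefully would essentially reproduce the paper's cases a), b), c1), c2).
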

\begin{proof} We are going to distinguish several cases. As ${3m\choose m-1}/{3m\choose m} = \frac{m}{2m+1}<\frac 12$, arguing indirectly  we may assume that at least two of the corresponding $m$- and $2m$-sets are present in the $\ff_i$.\\

\textbf{a) Exactly two are present (and four are missing).} It is enough to find one more missing set. If $H_i^m\in \ff_i$, then not {\it both} of $H_j^{m+1}(k)\in \ff_j$ and $H_k^{2m+1}\in \ff_k$ can hold.

If $H^{2m}_k\in\ff_k$ then not both $H^{m+1}_i(j)\in\ff_i$ and $H_j^{m-1}\in\ff_j$ can hold.\\

\textbf{b) Exactly three are present (and three are missing).} \\

We need to find three more missing sets. Two $2m$-sets force three missing sets. Say, $H^{2m}_k$ and $H^{2m}_j$ are present. Then consider three pairs
\begin{align*}H_i^{m+1}(j)&\ \ - \ \ H_j^{m-1},\\
H_i^{m+1}(k)&\ \ - \ \ H_k^{m-1},\\
H_j^{m+1}(i)&\ \ - \ \ H_i^{m-1}.\end{align*}
In each of these pairs there is at least one missing set.

Similarly, two $m$-sets force three missing sets. Assuming that $H_i^m, H_j^m$ are present, we get that in each of the pairs below one set is missing.
\begin{align*}H_j^{m+1}(k)&\ \ - \ \ H_k^{2m+1},\\
H_k^{m+1}(j)&\ \ - \ \ H_j^{2m+1},\\
H_k^{m+1}(i)&\ \ - \ \ H_i^{2m+1}.\end{align*}

\textbf{c) Exactly two of the $m$- and $2m$-sets are missing.} \\

In this case we have to find 5 more missing sets. We have two subcases.\\

\textbf{c1) $H_j^m$, $H_i^m$ are missing.} \vskip+0.1cm

Then $H_i^{2m}, H_j^{2m}, H_k^{2m}$ are present, and in each of the pairs below one set is missing:
\begin{align*}H_j^{m+1}(i)&\ \ - \ \ H_i^{2m+1},\\
H_i^{m+1}(j)&\ \ - \ \ H_j^{2m+1},\\
H_k^{m+1}(i)&\ \ - \ \ H_i^{m-1},\\
H_j^{m+1}(k)&\ \ - \ \ H_k^{m-1},\\
H_k^{m+1}(j)&\ \ - \ \ H_j^{m-1}.\end{align*}

\textbf{c2) $H_i^m$, $H_i^{2m}$ are missing.} \vskip+0.1cm

Then $H_j^{m}, H_k^{m}, H_j^{2m}, H_k^{2m}$ are present, and in each of the pairs below one set is missing:
\begin{align*}H_j^{m+1}(i)&\ \ - \ \ H_i^{2m+1},\\
H_k^{m+1}(i)&\ \ - \ \ H_i^{m-1},\\
H_i^{m+1}(k)&\ \ - \ \ H_k^{m-1},\\
H_i^{m+1}(j)&\ \ - \ \ H_j^{m-1}.\end{align*}
These pairs provide at least four more missing sets. Assume that $H_j^{2m+1}\in\ff_j, H_k^{2m+1}\in\ff_k$ (otherwise, we are done). This, together with $H_j^{m}\in \ff_j, H_k^{m}\in \ff_k$ implies that $H_i^{m+1}(k)$ and $H_i^{m+1}(j)$ are missing. Thus, either one of $H^{m-1}_j, H^{m-1}_k$ is missing, and we are done, or both are present, and in this case $H^{2m-2}_i$ is missing. The proof is complete.
\end{proof}

Note that the weight of $(m+1)$- and $(2m-2)$-sets is greater than the weight of $(m-1)$-sets. Therefore, in case of equality in \eqref{eq17} the missing sets must be $5$ out of the altogether $6$ sets of sizes $m-1$ and $2m-1$, and $2$ of the altogether $6$ sets of sizes $m$ and $2m$. Thus all the $(m+1)$- and $(2m-2)$-sets must be present. \\

Next we may combine \eqref{eq17} with an obvious analogue of Claim~\ref{clach2} for cross partition-free families. We omit the calculations, that almost repeat the calculations of Claim~\ref{clach2}. We only remark that, in addition to the triples of sizes mentioned in the claim, we have to consider the triples $(s_1,s_2,s_3)=(m-2,m+1,2m-1), (m+1,m+1,2m+2), (m+1,m+2,2m+3)$.

As in Claim~\ref{clach2}, equality in \eqref{eq9} implies that all sets of sizes $[m+1,\ldots,2m-1]$ must be in each $\ff_i$. This implies that $\ff_i\subset \cup_{t=m-1}^{2m+1}{[n]\choose t}$. Moreover, from the proof of Proposition~\ref{prop1} it follows that equality in \eqref{eq9} is possible only if $\ff_i$ does not contain any sets of size $m-1$ (otherwise, some $(m+1)$-sets are missing), and if exactly two out of $m$- and $2m$-sets are missing from $\cup_{i=1}^3\mathcal H_i\cap \ff_i$ for each choice of a triple of disjoint $m$-sets (so that we fall either in Case c1 or c2).

If we fall into Case c2, but all $(m+1)$-sets are present, then all $(m-1)$-sets are missing, but also all $(2m+1)$-sets are missing: $H_i^{2m+1}=H_j^m\cup H_k^{m+1}(i), H_j^{2m+1}=H_k^m\cup H_i^{m+1}(j), H_k^{2m+1}=H_j^m\cup H_i^{m+1}(k)$. Therefore, the equality cannot hold in this case.\\

We conclude that we are in the situation c1 and in each triple there in exactly one present $m$-set and $(2m+1)$-set, moreover, both belong to the same family. We are only left to prove that it must come from the same family for each triple. Note that $\cup_{t=m+1}^{2m}{[n]\choose t}\subset \ff_i$ for each $i\in[3]$.

Assume that at least two out of $\ff_i^{(2m+1)}$, $i=1,2,3$, are nonempty. Then for each $i\ne j$ and $F_i\in\ff_i^{(2m+1)}$, $F_j\in \ff_j^{(2m+1)}$ we have $F_i\cup F_j\ne [n]$. Otherwise, there exists a triple in which $F_i$ is the set $H^{2m+1}_i$ and $F_j=H^{2m+1}_j$, a contradiction with the form of $\cup_k \ff_k\cap \hh_k$.

Define $\mathcal Q_i:=\{[n]\setminus F: F\in \ff^{(2m+1)}_i\}$. Then the previous paragraph implies that for any $i\ne j$ $\mathcal Q_i$ and $\mathcal Q_j$ are \underline{cross-intersecting}: for any $Q_i\in \mathcal Q_i, Q_j\in \mathcal Q_j$ we have $Q_i\cap Q_j\ne \emptyset$. In \cite{FK1} the following useful inequality was proved (see Theorem 9 and Corollary 12 in \cite{FK1}): If $\mathcal G_1, \mathcal G_2\subset {[n]\choose t}$ are cross-intersecting and $|\mathcal G_1|\ge |\mathcal G_2|$, then for any $c\ge 1$ and $n\ge 2t$ one has
\begin{equation}\label{eq20}|\mathcal G_1|+c|\mathcal G_2|\le \max\Big\{{n\choose t},(c+1){n-1\choose t-1}\Big\}.
\end{equation}
W.l.o.g., assume that $|\mathcal Q_1|\ge |\mathcal Q_2|\ge |\mathcal Q_3|$. For $\epsilon=\frac 1m$ one has $(3+\epsilon){n-1\choose m-2}=(3+\epsilon)\frac{m-1}{3m}{n\choose m-1}<{n\choose m-1}$. Then, applying \eqref{eq20}, we get
$$\sum_{i\in[3]}|\mathcal Q_i|\le |\mathcal Q_1|+(2+\epsilon)|\mathcal Q_2|\le \max \Big\{{n\choose m-1},(3+\epsilon){n-1\choose m-2}\Big\}={n\choose m-1}.$$
Moreover, the first inequality above is strict unless $\mathcal Q_2=\mathcal Q_3 =\emptyset$. Therefore, we conclude that the equality in \eqref{eq9} may hold only if $\mathcal Q_1={[n]\choose m-1}$, and therefore if $\ff_1,\ff_2,\ff_3$ have the form as given in Example 4.

\section{Discussion}
In this paper we have completely settled the problem of determining the maximum size of a partition-free family $\ff$, as well as the multi-family analogue of this question. One natural direction to extend these results is to study $r$-partition-free families, defined in the introduction, as well as to study their $r$-partite analogues. Another natural generalization of partition-free families, that was overlooked so far, are the $r$-box-free families (also defined in the introduction).

More generally, we may ask the following question. Given a poset $(P,<)$, what is the largest size of a family $\ff\subset 2^{[n]}$, which does not contain a \underline {disjoint representation} of $(P,<)$? We say that $\ff$ contains a disjoint representation of $(P,v)$ if $\ff$ contains a subfamily $\mathcal H$ and there is a bijective function $f:\mathcal H\to P$ such that for any $H_1,H_2 \in \mathcal H$ $f(H_1)<f(H_2)$ only if $H_1\subset H_2$, with the additional condition that any two sets from $\mathcal H$ corresponding to minimal elements of $(P,<)$ are disjoint. We may also require the disjoint representations to be \underline{exact}, that is, to require that for every non-minimal $S\in \mathcal H$ we have $S = \cup_{i:f(S_i)<f(S)} S_i$.  In this terms, the question we addressed in this paper asks for the largest $\ff\subset 2^{[n]}$ without an exact disjoint representation of a poset on the elements $\{a,b,c\}$ with relations $a>b, a>c$. \\

We say that a family $\ff$ is \underline{$t$-pseudo partition-free}, if $\ff$ does not contain three sets $A,B,C$ with $A\cup B=C$ and $|A\cap B|<t$. One natural example of a $t$-pseudo partition-free family is $\{F\subset [n]: m\le |F|\le 2m-t\}$.  The following sharp result may be proved using a direct generalization of Kleitman's argument \cite{Kl2}.

\begin{thm}Let $n = 3m-t+2$, $1\le t\le \frac m8$. Then any $t$-pseudo partition-free family $\ff$ satisfies \begin{equation*} |\mathcal F|\le \sum_{t=m+1}^{2m-t+2} {n\choose t}.\end{equation*} \end{thm}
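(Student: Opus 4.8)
The plan is to follow Kleitman's argument for $n=3m+1$ (reviewed around~\eqref{eq12} and carried out, for three families, in Section~3), with his inequality~\eqref{eq12} replaced by a $t$-shifted version. Put $n=3m-t+2$, write $y^{\ell}={n\choose\ell}-|\ff\cap{[n]\choose\ell}|$, and call $\ell\in[m+1,2m-t+2]$ the \emph{middle} levels and $\ell\in[0,m]\cup[2m-t+3,n]$ the \emph{outer} levels. Since $m+1$ and $2m-t+2=n-m$ are complementary, complementation maps the set of middle levels onto itself; moreover $\{F:m+1\le|F|\le 2m-t+2\}$ is $t$-pseudo partition-free (two $(m+1)$-sets meeting in at most $t-1$ elements have union of size $\ge 2m-t+3$), so the bound is attained. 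As in~\cite{Kl2} one first removes the trivial cases $\emptyset\in\ff$ and $[n]\in\ff$ (if $[n]\in\ff$ then no two complementary proper subsets both lie in $\ff$, forcing $|\ff|$ well below the claimed value), so we may assume $\emptyset,[n]\notin\ff$.

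The key ingredient is: for every $t'$ with $0\le t'\le t-1$ and all nonnegative integers $s_1,s_2,s_3$ with $s_1+s_2+s_3+t'\le n$,
\begin{equation*}\sum_{i=1}^{3}\frac{y^{s_i+t'}}{{n\choose s_i+t'}}+\frac{y^{s_{i_+}+s_{i_-}+t'}}{{n\choose s_{i_+}+s_{i_-}+t'}}\ \ge\ 2 .\end{equation*}
This is proved just like~\eqref{eq12}: for pairwise disjoint sets $S_1,S_2,S_3,T$ with $|S_i|=s_i$ and $|T|=t'$, at most four of the six sets $S_i\cup T$ and $S_{i_+}\cup S_{i_-}\cup T$ ($i\in[3]$) lie in $\ff$ --- either all three sets $S_i\cup T$ do, and then no set $S_{i_+}\cup S_{i_-}\cup T=(S_{i_+}\cup T)\cup(S_{i_-}\cup T)$ can (its parts meet in $T$, of size $t'<t$), giving at most three; or some $S_3\cup T\notin\ff$ and then one of $S_1\cup T,\ S_2\cup T,\ S_1\cup S_2\cup T$ is missing --- and averaging over a uniformly random pairwise disjoint quadruple gives the displayed inequality. (For $t'=0$ this is~\eqref{eq12}, which applies because a $t$-pseudo partition-free family is in particular partition-free.)

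By the elementary argument of Section~3, it suffices to produce a nonnegative combination of these inequalities of the form $\sum_{\ell}d_\ell y^\ell\ge\sum_{\ell\text{ outer}}{n\choose\ell}$ with $0\le d_\ell\le1$ for all $\ell$; then $|\ff|=\sum_\ell{n\choose\ell}-\sum_\ell y^\ell\le\sum_\ell{n\choose\ell}-\sum_\ell d_\ell y^\ell\le\sum_{\ell=m+1}^{2m-t+2}{n\choose\ell}$. The triples I would use all have $s_1+t'$ a lower outer level $p$ and $s_2+s_3+t'=n-p$ its complement, so that scaling by ${n\choose p}={n\choose n-p}$ and letting $p$ run over $[0,m]$ makes every outer level get coefficient exactly $1$; the four remaining shifted sizes of each triple lie in $[m+1,2m-t+2]$. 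Transcribing Table~\ref{tab1} directly suggests the triples $(p,\ m+1,\ 2m-t+1-p)$ with $t'=0$ for $0\le p\le m-t$, the triples $(p-t+1,\ m-t+2,\ 2m-t+1-p)$ with $t'=t-1$ for $m-t<p<m$, and the triple $(m-t+1,m-t+1,m-t+2)$ with $t'=t-1$ scaled by $\tfrac12{n\choose m}$; for $t=1$ this is exactly the combination of Section~3.

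The real work is to verify that no middle level receives total coefficient exceeding $1$. Most middle levels are hit only by slots of the form ${n\choose q}$ with $q<m$ and so get coefficient far below $1$; the difficulty is concentrated at the two ``anchor'' levels $m+1$ and $2m-t+1$, whose coefficient is of the shape $\bigl(\sum_{q<m}{n\choose q}+\text{lower-order terms}\bigr)/{n\choose m+1}$. For $t=1$, $n=3m+1$ so $m\le n/3$ and~\eqref{eq005} shows this is $<1$ (barely), as in Section~3. For $t\ge2$ one has $m>n/3$, \eqref{eq005} fails at $k=m$, and the combination above in fact overloads these anchor levels slightly; the fix is to spread the anchor load over a bounded number of consecutive middle levels and to treat individually the $O(t)$ triples with $p$ close to $m$ (for which the four middle slots of a ``complement-targeting'' triple cannot all be kept off $m+1$), absorbing their geometrically decaying weights. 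The hypothesis $t\le m/8$ is precisely what keeps the ratios ${n\choose p}/{n\choose p-1}=\frac{n-p+1}{p}$ above $\tfrac{15}{8}$ throughout $p\le m$ while ${n\choose m+1}/{n\choose m}=\frac{n-m}{m+1}$ remains comparable, leaving room for this redistribution. This binomial bookkeeping, rather than any new idea, is the crux; with it in hand the bound --- and its sharpness, already noted --- follow.
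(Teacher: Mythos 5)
Your shifted inequality is correct, and it is essentially the paper's~\eqref{eq19} in a slightly more flexible form: the paper takes random $A,B,C$ of sizes $s_1,s_2,s_3$ with $s_1+s_2+s_3=n+t-1$ and a common pairwise intersection $S$ of size $t-1$, which is your $t'=t-1$ case. Your observation that $t$-pseudo partition-free implies partition-free (so $t'=0$ is available) is also fine. The overall plan --- a nonnegative combination giving coefficient $1$ on outer levels and $\le 1$ on middle levels --- is exactly the paper's.

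The genuine gap is the choice of triples, which you acknowledge is the crux but do not actually supply. The ``direct transcription of Table~1'' you propose, where $s_2$ is always pinned at $m+1$, really does overload $y^{m+1}$ (and symmetrically $y^{2m-t+1}$) once $t\ge 2$: the coefficient becomes $\big(\tfrac12{n\choose m}+\sum_{p\le m-1}{n\choose p}+\text{extras}\big)/{n\choose m+1}$, and since $m>n/3$ inequality~\eqref{eq005} no longer controls $\sum_{p\le m-1}{n\choose p}$; a geometric estimate with ratio $\tfrac{8}{15}$ puts this coefficient well above $1$. The hand-waved ``fix'' --- spread the anchor load over a few middle levels and treat $O(t)$ triples near $p=m$ individually --- gestures in the right direction but is not a proof. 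What the paper actually does (Table~3) is choose, for each outer level $m-j$, the balanced triple $(m-j,\ m+\lfloor\tfrac{j+1}{2}\rfloor,\ m+\lceil\tfrac{j+1}{2}\rceil)$ with a single fixed shift $t'=t-1$, so that the two middle slots march upward with $j$; then any middle level $m+1+j$ is hit only by the rows $j'\in\{2j,2j+1,2j+2\}$, the resulting coefficient is of the form $\big({n\choose m-2j}+2{n\choose m-2j-1}+{n\choose m-2j-2}\big)/{n\choose m+j+1}$, and the $t\le m/8$ hypothesis (giving consecutive-ratio $\ge\tfrac{15}{8}$) bounds this below $1$. This balanced-triple design, not a post-hoc redistribution, is the missing ingredient; without it the proposal does not produce the required $\sum_\ell d_\ell y^\ell\ge\sum_{\ell\text{ outer}}{n\choose\ell}$ with $d_\ell\le 1$.
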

Below we give an outline of the proof of this theorem.
\begin{proof}[Sketch of the proof]
A natural variant of \eqref{eq1} for $t$-pseudo partition free families would state that for any $s_1,s_2,s_3$, such that $s_1+s_2+s_3 = n+t-1$, one has the following inequality:
\begin{equation}\label{eq19}\sum_{i=1}^3 \frac {y^{s_i}+y^{n-s_i}}{{n\choose s_i}}\ge 2.\end{equation}
Indeed, just take three random sets $A,B,C$ of sizes $s_1,s_2,s_3$, respectively, with $S = A\cap B = B\cap C = A\cap C$, $|S|=t-1$. Then note that among $A,B,C, A\cup B, A\cup C, B\cup C$ there are at least two sets that are missing from $\ff$. Finally, average over the choice of $A,B,C$.\\

\begin{vwcol}[widths={0.5,0.37}, rule=0pt]

Next, we reason as in Section~3. We apply \eqref{eq19} for different triples of $s_i$, listed in Table~3. We sum up all the obtained inequalities and multiply them by the corresponding ${n\choose s_1}$ (except for the first one, which we multiply by $\frac 12 {n\choose m}$). Now we know that all the coefficients in front of $y^r$ for $r\le m$ and $r\ge 2m-t+3$ are equal to $1$. We only need to make sure that the coefficients in front of $y^r$ for $m+1\le r\le 2m-t+2$ are also at most $1$.

\begin{tabular}[t]{|c|c|c|}
\hline
$s_1$&$s_2$&$s_3$\\
\hline
$m$&$m$&$m+1$\\
$m-1$&$m+1$&$m+1$\\
$m-2$&$m+1$&$m+2$\\
$m-3$&$m+2$&$m+2$\\
$\vdots$&$\vdots$&$\vdots$\\
$m-j$&$m+\lfloor \frac {j+1}2\rfloor$&$m+\lceil \frac {j+1}2\rceil$\\
$\vdots$&$\vdots$&$\vdots$\\
$0$&$\lfloor \frac {n+t-1}2\rfloor$&$\lceil \frac {n+t-1}2\rceil$\\
\hline
\end{tabular}
\vskip-0.05cm \phantom{a}\qquad \qquad \quad Table 3
\end{vwcol}
We have $\frac{{n\choose m+1}}{{n\choose m}} = \frac {2m-t+2}{m+1}\ge\frac{2m-\frac m8+2}{m+1}>\frac{15}8$. Analogously, $\frac{{n\choose m-j+1}}{{n\choose m-j}}>\frac {15}8$ for any $j\ge 0$.
The coefficients in front of $y^{m+1}$ and $y^{2m-t+1}$ are equal to
$$\frac{\frac 12{n\choose m}+2{n\choose m-1}+{n\choose m-2}}{{n\choose m+1}}<\frac{\frac 12{n\choose m}+2{n\choose m-1}+{n\choose m-2}}{\frac{15}8{n\choose m}}=\frac 4{15}+\frac{\frac{16}{15}{n\choose m-1}+\frac 8{15}{n\choose m-2}}{{n\choose m}}\le $$
$$\frac 4{15}+\frac{\frac{16}{15}{n\choose m-1}+\frac 8{15}{n\choose m-2}}{\frac{15}8{n\choose m-1}}=\frac 4{15}+\frac{128}{225}+\frac{\frac {64}{225}{n\choose m-2}}{{n\choose m-1}}\le \frac 4{15}+\frac{128}{225}+\frac {256}{3375} = \frac{3076}{3375}<1. $$
The coefficients in front of $y^{m+1+j}$ and $y^{2m-t+1-j}$ for $j\ge 1$ are
$$\frac{{n\choose m-2j}+2{n\choose m-2j-1}+{n\choose m-2j-2}}{{n\choose m+j}}< \frac{4{n\choose m-2j}}{{n\choose m+j}}<4\Big(\frac8{15}\Big)^3<1.$$
\end{proof}

It would be interesting to find analogous results for $n=3m-t$ and $n=3m-t+1$, as well as to get a significant improvement of the bound $t\le \frac m8$.

\end{document}